\theoremstyle{plain}
\newtheorem{theorem}{Theorem}[section]
\newtheorem{lemma}{Lemma}[section]
\newtheorem{proposition}{Proposition}[section]
\newtheorem{corollary}{Corollary}[section]
\theoremstyle{definition}
\newtheorem{definition}{Definition}[section]
\newtheorem{example}{Example}[section]
\theoremstyle{remark}
\newtheorem{remark}{Remark}[section]
\newcommand\latop[2]{{#1\atop#2}}
\newcommand\oooo[4]{\begin{picture}(87,12)
\put(3,3){\makebox(0,0){$\circ$}}\put(5,3){\line(1,0){18}}
\put(25,3){\makebox(0,0){$\circ$}}\put(27,3){\line(1,0){6}}
\put(44,3){\makebox(0,0){\dots}}\put(55,3){\line(1,0){6}}
\put(63,3){\makebox(0,0){$\circ$}}\put(65,3){\line(1,0){18}}
\put(85,3){\makebox(0,0){$\circ$}}
\put(3,10){\makebox(0,0){\scriptsize $#1$}}
\put(25,10){\makebox(0,0){\scriptsize $#2$}}
\put(63,10){\makebox(0,0){\scriptsize $#3$}}
\put(85,10){\makebox(0,0){\scriptsize $#4$}}
\end{picture}}
\newcommand\tab{\begin{picture}(69,13)
\put(1,13){\line(0,-1){12}}\put(7,13){\line(0,-1){12}}
\put(1,1){\line(1,0){34}}\put(1,7){\line(1,0){68}}\put(1,13){\line(1,0){68}}
\put(18,10){\makebox(0,0){\dots}}\put(18,4){\makebox(0,0){\dots}}
\put(29,13){\line(0,-1){12}}\put(35,13){\line(0,-1){12}}
\put(41,13){\line(0,-1){6}}\put(63,13){\line(0,-1){6}}\put(69,13){\line(0,-1){6}}
\put(52,10){\makebox(0,0){\dots}}
\end{picture}}
\newcommand{\one}{\begin{picture}(7,7)
\put(1,7){\line(0,-1){6}}\put(1,7){\line(1,0){6}}\put(1,1){\line(1,0){6}}\put(7,7){\line(0,-1){6}}
\end{picture}}
\newcommand{\secsym}{\begin{picture}(13,7)
\put(1,7){\line(0,-1){6}}\put(7,7){\line(0,-1){6}}\put(13,7){\line(0,-1){6}}
\put(1,7){\line(1,0){12}}\put(1,1){\line(1,0){12}}
\end{picture}}
\newcommand{\secskewsym}{\begin{picture}(7,13)
\put(1,13){\line(0,-1){12}}\put(7,13){\line(0,-1){12}}
\put(1,13){\line(1,0){6}}\put(1,7){\line(1,0){6}}\put(1,1){\line(1,0){6}}
\end{picture}}
\newcommand{\thirdsym}{\begin{picture}(19,7)
\put(1,7){\line(0,-1){6}}\put(7,7){\line(0,-1){6}}
\put(13,7){\line(0,-1){6}}\put(19,7){\line(0,-1){6}}
\put(1,7){\line(1,0){18}}\put(1,1){\line(1,0){18}}
\end{picture}}
\newcommand{\corner}{\begin{picture}(13,13)
\put(1,13){\line(0,-1){12}}\put(7,13){\line(0,-1){12}}\put(13,13){\line(0,-1){6}}
\put(1,13){\line(1,0){12}}\put(1,7){\line(1,0){12}}\put(1,1){\line(1,0){6}}
\end{picture}}
\newcommand{\thirdskewsym}{\begin{picture}(7,19)
\put(1,19){\line(0,-1){18}}\put(7,19){\line(0,-1){18}}
\put(1,19){\line(1,0){6}}\put(1,13){\line(1,0){6}}
\put(1,7){\line(1,0){6}}\put(1,1){\line(1,0){6}}
\end{picture}}
\DeclareMathOperator{\End}{End}
\DeclareMathOperator{\depth}{depth}
\DeclareMathOperator{\sgn}{sgn}
\begin{document}

\title{Symmetries of CR sub-Laplacian}


\author{Zuzana Vlas\' akov\' a}

\maketitle

\begin{abstract}
We define a CR structure on a distinguished hyperplane in $\mathbb{C}^{n+1}$ and the CR sub-Laplacian on this CR manifold. We also define symmetries of the CR sub-Laplacian in general and for this special case construct all of them using the ambient construction. Then we investigate the algebra structure of the symmetry algebra of the sub-Laplacian. For this purpose we derive the decomposition of $S^{k}_{0}\mathfrak{sl}(V)$ under the action of $SL(V)$.
\end{abstract}

\section{Introduction}

Invariant differential operators have a long story of importance and this is particularly the case for operators of Laplace type. The conformally invariant Laplacian is the basic example in conformal geometry. A family of higher order generalizations of the conformal Laplacian with principal part a power of the Lalacian was constructed in \cite{powlap}. In CR geometry, the CR invariant sub-Laplacian of Jerison-Lee (\cite{sublap}) plays a role analogous to that of the conformal Laplacian. In \cite{powsublap} generalizations of the Jerison-Lee sub-Laplacian are defined, which are the CR analogues of the 'conformally invariant powers of the Laplacian'.\par
This work was inspired by the article \cite{symlap} by M. Eastwood, and the diploma thesis \cite{diplomka} by V{\'\i}t Tu\v cek. The aim was to characterize the vector space of all symmetries of the CR sub-Laplacian. In the paper \cite{symlap} author identifies the symmetry algebra of the Laplacian on the Euclidean space as an explicit quotient of the universal enveloping algebra of the Lie algebra of conformal motions and constructs analogues of these symmetries on a general conformal manifold.\par
The space of smooth first order linear differential operators on $\mathbb{R}^{n}$ that preserve harmonic functions is closed under Lie bracket. For $n\geq3$, it is finite-dimensional (of dimension $(n^{2}+3n+4)/2$). Its commutator algebra is isomorphic to $\mathfrak{so}(n+1,1)$, the Lie algebra of conformal motions of $\mathbb{R}^{n}$. Second order symmetries of the Laplacian on $\mathbb{R}^{3}$ were classified by Boyer, Kalnis, and Miller in \cite{sep}. Commuting pairs of second order symmetries, as observed by Winternitz and Fri\v s in \cite{commsym}, correspond to separation of variables for the Laplacian. This leads to classical coordinate systems and special functions, see \cite{sep} and \cite{miller}.\par
General symmetries of the Laplacian on $\mathbb{R}^{n}$ give rise to an algebra, filtered by degree. For $n\geq3$, the filtering subspaces are finite-dimensional and closely related to the space of conformal Killing tensors. The main result of \cite{symlap} is an explicit algebraic description of this symmetry algebra. The motivation for \cite{symlap} has come from physics, especially the theory of higher spin fields and their symmetries.\par
In section 2 we will define a CR structure on a distinguished hyperplane in $\mathbb{C}^{n+1}$ (it is in fact the big cell in the homogeneous model of CR geometry viewed as a parabolic geometry together with the very flat Weyl structure on it, see \cite{parabook}) and the CR sub-Laplacian on this CR manifold. We also define symmetries of the CR sub-Laplacian in general. In section 3 we give a classification of symmetries via properties of their symbol. In section 4 we introduce the ambient construction and use it to construct symmetries of the sub-Laplacian, which enables us to prove existence of symmetries and to give a characterization of vector space of symmetries as an $SL(n+2,\mathbb{C})$-module. In section 6 we establish the algebra structure of the symmetry algebra using the commutant of the action of $SL(n+2,\mathbb{C})$ on $S^{k}_{0}\mathfrak{sl}(n+2,\mathbb{C})$ computed in section 5.

\section{Basic definitions}

A \emph{Levi-nondegenerate CR structure of hypersurface type} on a $(2n+1)$-dimensional manifold $M$ is a subbundle $HM\subset TM$ of real codimension 1 endowed with an integrable complex structure, s.t. the \emph{Levi bracket} $\mathcal{L}:HM\times HM\to TM/HM=:QM$ given by $\mathcal{L}(X,Y)=p([X,Y])$, where $p$ is the canonical projection, is nondegenerate. The nondegeneracy of the Levi bracket is equivalent to the fact that $HM$ induces a contact structure on $M$. The Levi bracket may be thought of as minus twice the imaginary part of some Hermitian inner product, and signature of this inner product is called the \emph{signature} of $M$. None of what follows will depend on the signature. Since we will only consider the case of Levi-nondegenerate CR structures of hypersurface type, we will for brevity call them CR structures.\par
On each CR manifold $M$ we have an $n$-dimensional complex vector bundle $HM^{(1,0)}\subset TM\otimes\mathbb{C}$ and its conjugate $HM^{(0,1)}\subset TM\otimes\mathbb{C}$. Define $\Lambda^{(1,0)}\subset T^{*}M\otimes\mathbb{C}$ by $\Lambda^{(1,0)}=(HM^{(0,1)})^{\bot}$. The \emph{canonical bundle} $\mathcal{K}:=\Lambda^{n+1}(\Lambda^{(1,0)})$ is a complex line bundle on $M$. We will assume that $\mathcal{K}$ admits an $(n+2)$nd root and we fix a bundle denoted by $\mathcal{E}(1,0)$, which is a $-1/(n+2)$th power of $\mathcal{K}$. The bundle $\mathcal{E}(w_{1},w_{2}):=(\mathcal{E}(1,0))^{w_{1}}\otimes(\overline{\mathcal{E}(1,0)})^{w_{2}}$ of $(w_{1},w_{2})$-densities is defined for $w_{1},w_{2}\in\mathbb{C}$ satisfying $w_{1}-w_{2}\in\mathbb{Z}$. If $\mathcal{E}(1,0)\setminus\{0\}$ is viewed as $\mathbb{C}^{\times}$-principal bundle, then $\mathbb{E}(w_{1},w_{2})$ is the bundle induced by the representation $\lambda\mapsto\lambda^{-w_{1}}\overline{\lambda}^{-w_{2}}$. The bundle $QM\otimes\mathbb{C}$ can be identified with $\mathcal{E}(1,1)$.\par
Let's consider $\mathbb{C}^{n+1}(z^{1},\dots,z^{n},z^{\infty})$ with Hermitian metric of the form
\begin{displaymath}
\left(\begin{matrix}
g_{\bar{a}b}&0\\
0&0
\end{matrix}\right)	
\end{displaymath}
where $g_{\bar{a}b}$ is of signature $(p,q)$ with $p+q=n$ and consider a submanifold $M\subset\mathbb{C}^{n+1}$ given by
\begin{displaymath}
\sum_{a=1}^{n}z^{a}z_{a}+z^{\infty}+\bar{z}^{\infty}=0	
\end{displaymath}
On the manifold $M$ we shall define a CR structure. In coordinates the submanifold $M$ looks like 
\begin{displaymath}
M=\{(z^{1},\dots,z^{n},-\sum_{a=1}^{n}\frac{z^{a}z_{a}}{2}+i\sigma)\in\mathbb{C}^{n+1}\}
\end{displaymath}
where we put $z^{\infty}=\rho+i\sigma$. In terms of coordinates on $\mathbb{C}^{n+1}$, the coordinate vector fields on $M$ look like $\partial_{z^{a}}-\frac{z_{a}}{2}\partial_{z^{\infty}}-\frac{z_{a}}{2}\partial_{\bar{z}^{\infty}}$, their conjugates, and $\partial_{\sigma}=i(\partial_{z^{\infty}}-\partial_{\bar{z}^{\infty}})$. The contact subbundle $HM^{(1,0)}\subset (T\mathbb{C}^{n+1})^{(1,0)}$ has basis $\{\partial_{a}=\partial_{z^{a}}-z_{a}\partial_{z^{\infty}},a=1,\dots,n\}$, since it has to be formed by complex linear combinations of coordinate vector fields holomorphic as vector fields on $\mathbb{C}^{n+1}$. The only nontrivial commutator is
\begin{displaymath}
[\partial_{\bar{a}},\partial_{b}]=ig_{\bar{a}b}\partial_{\sigma}	
\end{displaymath}
We will work mostly with vector fields $\partial_{\bar{a}}, \partial_{b}$, the coordinate vector fields will be without use.\par
Our manifold $M$ is in fact a big cell (see \cite{parabook}) in the corresponding Hermitian quadric (depending on the signature) in complex projective space, which is a homogeneous model (the quadric) of hypersurface-type nondegenerate CR structure of corresponding signature. Such a structure is a special case of parabolic structure, so we can use the theory of parabolic geometries. In particular, we can use the notion of Weyl structure (\cite{parabook}, \cite{Weylstr}), concretely the very flat Weyl structure on the big cell of the homogeneous model of such geometry. From now on we will work with this concrete Weyl structure. So we can identify all densities with functions and the Weyl derivative on these bundles with ordinary derivatives, i.e. $\nabla_{a}\equiv\partial_{a}$ and so on.The advantage of this Weyl structure is that in the formulae for all operators we will work with the curvature terms vanish.
\begin{definition}
The \emph{CR sub-Laplacian} $\Delta:\mathcal{E}(w_{1},w_{2})\to\mathcal{E}(w_{1}-1,w_{2}-1)$ on $M$ for $n+w_{1}+w_{2}=0$ is given by
\begin{displaymath}
\Delta(f):=g^{a\bar{b}}(\partial_{a}\partial_{\bar{b}}+\partial_{\bar{b}}\partial_{a})(f)+i\frac{w_{1}-w_{2}}{2}\partial_{\sigma}(f)	
\end{displaymath}
where we implicitly use the Einstein summation convention.
\end{definition}
\begin{definition}
\begin{itemize}
\item [a)]A \emph{symmetry} of $\Delta$ is a linear differential operator $\mathcal{D}$, s.t. there exists a differential operator $\delta$ satisfying $\delta\Delta=\Delta D$.
\item [b)]A symmetry of $\Delta$ is called \emph{trivial}, if it is of the form $P\Delta$ for some linear differential operator $\Delta$. Two symmetries $\mathcal{D}_{1}$ and $\mathcal{D}_{2}$ are called \emph{equivalent}, if their difference is a trivial symmetry.
\end{itemize}
\end{definition}
Since the trivial symmetries are not very interesting, we will only consider the vector space of symmetries modulo the equivalence relation that symmetries differ by a trivial symmetry.

\section{Properties of the symbol}

The algebra $\mathcal{A}$ of symmetries is naturally filtered by order $d$ of  operators. For our purposes let us introduce a finer filtration.
\begin{definition}
We say that the term $\partial_{a_{1}}\dots\partial_{a_{k}}\partial_{\bar{b}_{1}}\dots\partial_{\bar{b}_{l}}\partial_{\sigma}\dots\partial_{\sigma}$ (with $\partial_{\sigma}$ being $m$-times) as being of degree $(d,s)$, if $k+l+m=d$ and $s=\min(k,l)$.\\
We put $(d,s)\leq(d',s')$ if $d\leq d'$ or $d=d'$ and $s\geq s'$. Here $2s\leq d$.\\
Then we define
\begin{displaymath}
	\mathcal{A}^{(d,s)}=\bigoplus_{(d',s')\leq(d,s)}\mathcal{A}_{(d',s')}
\end{displaymath}
\end{definition}
Before stating the theorem, we shall emphasize that the symmetries, since acting on \emph{complex}-valued functions (densities), form naturally a \emph{complex} vector space. Therefore all representations we will work with will be \emph{complex} (may be viewed as representations of $SL(n+2,\mathbb{C})$) and by a BGG operator we will mean its complex-linear extension to the complexification of corresponding bundle.
\begin{theorem}\label{thmsymb}
Every symmetry is equivalent to some of the form
\begin{displaymath}
P=\sum_{\latop{k+l\leq d}{s=\min(k,l)}}V^{a_{1}\dots a_{k}\bar{b}_{1}\dots\bar{b}_{l}\sigma\dots\sigma}\partial_{a_{1}}\dots\partial_{a_{k}}\partial_{\bar{b}_{1}}\dots\partial_{\bar{b}_{l}}
\partial_{\sigma}\dots\partial_{\sigma}+LDTS
\end{displaymath}
with each $V^{a_{1}\dots a_{k}\bar{b}_{1}\dots\bar{b}_{l}\sigma\dots\sigma}$ having exactly $d$ indices and
\begin{itemize}
  \item[(1)] is maximally symmetric and trace-free
  \item[(2)] $V^{a_{1}\dots a_{s}\bar{b}_{1}\dots\bar{b}_{s}\sigma\dots\sigma}$ is a solution of the first BGG operator corresponding to $\quad\oooo{d-2s}{s}{s}{d-2s}\quad$ and
  \begin{gather*}
  V^{a_{1}\dots a_{k+s}\bar{b}_{1}\dots\bar{b}_{s}\sigma\dots\sigma}=\frac{i^{k}}{k!}\partial^{(a_{1}}\dots\partial^{a_{k}}V^{a_{k+1}\dots a_{k+s})\bar{b}_{1}\dots\bar{b}_{s}\sigma\dots\sigma}\\
  V^{a_{1}\dots a_{s}\bar{b}_{1}\dots\bar{b}_{k+s}\sigma\dots\sigma}=  \frac{(-i)^{k}}{k!}\partial^{(\bar{b}_{1}}\dots\partial^{\bar{b}_{k}}V^{\bar{b}_{k+1}\dots\bar{b}_{k+s})a_{1}\dots a_{s}\sigma\dots\sigma}
  \end{gather*}
\end{itemize}  
\end{theorem}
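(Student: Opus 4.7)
The plan is to normalize a given symmetry $\mathcal{D}$ of top total degree $d$ modulo trivial symmetries, so that its degree-$d$ piece takes the stated form; the lower-order piece then constitutes the LDTS. The only non-trivial commutator among the basic vector fields is $[\partial_{\bar{a}},\partial_b]=ig_{\bar{a} b}\partial_\sigma$, so any monomial can be reordered as $\partial_{a_1}\cdots\partial_{a_k}\partial_{\bar{b}_1}\cdots\partial_{\bar{b}_l}\partial_\sigma^m$ at the cost of terms of strictly lower total degree, making the degree-$d$ symbol automatically symmetric in the $a$-indices and in the $\bar{b}$-indices. Since $\Delta=2g^{a\bar{b}}\partial_{\bar{b}}\partial_a+i\bigl((w_1-w_2)/2-n\bigr)\partial_\sigma$, any $g^{a\bar{b}}$-trace in the degree-$d$ symbol can be absorbed by subtracting $P\Delta$ for a suitable $P$ of lower total degree; a descending induction on $(d,s)$ makes every $V^{a_1\dots a_k\bar{b}_1\dots\bar{b}_l\sigma\dots\sigma}$ trace-free, establishing (1).

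For (2), I expand the symmetry condition $\Delta\mathcal{D}=\delta\Delta$ and compare coefficients level by level in the filtration. At the level corresponding to the monomial $\partial_{a_1}\cdots\partial_{a_{k+s}}\partial_{\bar{b}_1}\cdots\partial_{\bar{b}_s}\partial_\sigma^{d-2s-k}$, the Leibniz rule together with repeated applications of $[\partial_a,\partial_{\bar{b}}]=-ig_{\bar{b} a}\partial_\sigma$ force the symbol carrying $k+s$ upper unbarred indices to be a symmetrized derivative of the central symbol $V^{a_1\dots a_s\bar{b}_1\dots\bar{b}_s\sigma\dots\sigma}$. Each of the $k$ required commutators contributes a factor of $i$ and the symmetrization over the raised indices supplies the $1/k!$, reproducing exactly the stated formula $V^{a_1\dots a_{k+s}\bar{b}_1\dots\bar{b}_s\sigma\dots\sigma}=(i^k/k!)\partial^{(a_1}\cdots\partial^{a_k}V^{a_{k+1}\dots a_{k+s})\bar{b}_1\dots\bar{b}_s\sigma\dots\sigma}$; the conjugate formula follows by the symmetric argument in the $\bar{b}$-direction.

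Once the off-diagonal symbols are replaced by these prolongation expressions, a residual first-order PDE on the central symbol remains. Trace-freeness and the maximal symmetry of the indices place $V^{a_1\dots a_s\bar{b}_1\dots\bar{b}_s\sigma\dots\sigma}$ in the irreducible bundle whose fibre has highest weight $\oooo{d-2s}{s}{s}{d-2s}$, and a direct comparison with the explicit form of the first BGG operator for this bundle identifies the residual PDE as exactly that operator. The main obstacle will be this last identification: it requires careful bookkeeping of all commutator corrections generated during the reordering and trace-removal steps to confirm that the surviving PDE coincides \emph{exactly} with the first BGG operator for the Dynkin-labelled irreducible $SL(n+2,\mathbb{C})$-module in the CR parabolic setting, with no additional components spoiling the match.
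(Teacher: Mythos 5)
Your proposal follows essentially the same route as the paper: reorder monomials using $[\partial_{\bar a},\partial_b]=ig_{\bar ab}\partial_\sigma$ to make symbols symmetric, absorb $g^{a\bar b}$-traces by subtracting trivial symmetries of the form $P\Delta$, then compare coefficients in the expansion of $\Delta\mathcal D-\delta\Delta$ level by level in the $(d,s)$-filtration to derive the prolongation formulas for the wing symbols and the BGG condition on the central one. Two inaccuracies in your description, neither fatal: the factor $i^{k}/k!$ is not one $i$ per commutator plus a symmetrization weight, but rather the product of the constants $i/(m)$, $m=1,\dots,k$, obtained by solving the first-order recursion $i\,m\,V^{\dots}+\partial^{(a}V^{\dots)}=(\text{trace})$ step by step as in the paper's equations (\ref{asigma}) and (\ref{abarasigma}); and the residual equation on the central symbol $V^{a_1\dots a_s\bar b_1\dots\bar b_s\sigma\dots\sigma}$ is of order $d+1-2s$, not first order---the \emph{coupled} system on all the wing symbols is first order, but eliminating them raises the order as in (\ref{kerbgg*}). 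Your honest flagging of the identification of that residual equation with the first BGG operator as the delicate point matches the paper's own level of rigor there, which also states the identification by inspection rather than proving it.
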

\begin{proof}
\begin{itemize}
  \item[(1)] Every symmetry of order $d$ can be written as
\begin{displaymath}
P=\sum_{k+l\leq d}V^{a_{1}\dots a_{k}\bar{b}_{1}\dots\bar{b}_{l}\sigma\dots\sigma}\partial_{a_{1}}\dots\partial_{a_{k}}\partial_{\bar{b}_{1}}\dots\partial_{\bar{b}_{l}}
\partial_{\sigma}\dots\partial_{\sigma}+LOTS
\end{displaymath}
where each $V^{a_{1}\dots a_{k}\bar{b}_{1}\dots\bar{b}_{l}\sigma\dots\sigma}$ is totally symmetric (because the commutator of two derivatives gives a term of lower order) and has exactly $d$ indices. To prove that it may be considered trace-free, let's assume without loss of generality that it has a trace in $a_{k}\bar{b}_{l}$, hence it has a summand of the form $g^{a_{k}\bar{b}_{l}}W^{a_{1}\dots a_{k-1}\bar{b}_{1}\dots\bar{b}_{l-1}\sigma\dots\sigma}$. We want to show that using the equivalence relation we can leave out this term. We commute the derivatives $\partial_{a_{k}}\partial_{\bar{b}_{l}}$ to the end (it does not effect the symbol) to get
\begin{displaymath}
g^{a_{k}\bar{b}_{l}}W^{a_{1}\dots a_{k-1}\bar{b}_{1}\dots\bar{b}_{l-1}\sigma\dots\sigma}\partial_{a_{1}}\dots\partial_{a_{k-1}}\partial_{\bar{b}_{1}}\dots\partial_{\bar{b}_{l-1}}\partial_{\sigma}\dots\partial_{\sigma}\partial_{a_{k}}\partial_{\bar{b}_{l}}+LOTS	
\end{displaymath}
Replacing $\partial_{a_{k}}\partial_{\bar{b}_{l}}$ by $\frac{1}{2}(\partial_{a_{k}}\partial_{\bar{b}_{l}}+\partial_{\bar{b}_{l}}\partial_{a_{k}})$ (the commutator is of lower order), we get the leading term of $W^{a_{1}\dots a_{k-1}\bar{b}_{1}\dots\bar{b}_{l-1}\sigma\dots\sigma}\partial_{a_{1}}\dots\partial_{a_{k-1}}\partial_{\bar{b}_{1}}\dots\partial_{\bar{b}_{l-1}}\partial_{\sigma}\dots\partial_{\sigma}$ composed with the sub-Laplacian. Subtracting this composition, we get an operator of lower order. So the trace part of $V^{a_{1}\dots a_{k}\bar{b}_{1}\dots\bar{b}_{l}\sigma\dots\sigma}$ can be left out using the equivalence, hence we may consider it being trace-free.
  \item[(2)] If we commute $P$ with the sub-Laplacian, the leading term of the commutator consists of terms of two types. Every $V^{a_{1}\dots a_{k}\bar{b}_{1}\dots\bar{b}_{l}\sigma\dots\sigma}$ gives rise to a term \begin{gather*}
\partial^{(a}V^{a_{1}\dots a_{k})\bar{b}_{1}\dots\bar{b}_{l}\sigma\dots\sigma}\partial_{a}\partial_{a_{1}}\dots\partial_{a_{k}}\partial_{\bar{b}_{1}}\dots\partial_{\bar{b}_{l}}\partial_{\sigma}\dots\partial_{\sigma}+\\
+\partial^{(\bar{b}}V^{\bar{b}_{1}\dots\bar{b}_{l})a_{1}\dots a_{k}\sigma\dots\sigma}\partial_{a_{1}}\dots\partial_{a_{k}}\partial_{\bar{b}}\partial_{\bar{b}_{1}}\dots\partial_{\bar{b}_{l}}
\partial_{\sigma}\dots\partial_{\sigma}
\end{gather*}
coming from commuting $V^{a_{1}\dots a_{k}\bar{b}_{1}\dots\bar{b}_{l}\sigma\dots\sigma}$ with derivatives of the sub-Laplacian. Commuting the derivatives of the sub-Laplacian with the derivatives of $P$, we get
\begin{gather*}
i(k-l)V^{a_{1}\dots a_{k}\bar{b}_{1}\dots\bar{b}_{l}\sigma\dots\sigma}\partial_{a_{1}}\dots\partial_{a_{k}}\partial_{\bar{b}_{1}}\dots\partial_{\bar{b}_{l}}
\partial_{\sigma}\dots\partial_{\sigma}\partial_{\sigma}
\end{gather*}
So together the leading term of the commutator is
\begin{gather*}
\sum_{k+l=d+1}(\partial^{(a_{1}}V^{a_{2}\dots a_{k})\bar{b}_{1}\dots\bar{b}_{l}}+\partial^{(\bar{b}_{1}}V^{\bar{b}_{2}\dots\bar{b}_{l})a_{1}\dots a_{k}})\partial_{a_{1}}\dots\partial_{a_{k}}\partial_{\bar{b}_{1}}\dots\partial_{\bar{b}_{l}}+\\
+\sum_{1\leq k\leq d}(ikV^{a_{1}\dots a_{k}\sigma\dots\sigma}+\partial^{(a_{1}}V^{a_{2}\dots a_{k})\sigma\dots\sigma})\partial_{a_{1}}\dots\partial_{a_{k}}\partial_{\sigma}\dots\partial_{\sigma}+\\
+\sum_{1\leq l\leq d}(-ilV^{\bar{b}_{1}\dots\bar{b}_{k}\sigma\dots\sigma}+\partial^{(\bar{b}_{1}}V^{\bar{b}_{2}\dots\bar{b}_{l})\sigma\dots\sigma})\partial_{\bar{b}_{1}}\dots\partial_{\bar{b}_{l}}\partial_{\sigma}\dots\partial_{\sigma}+\\
+\sum_{\latop{k\geq1,l\geq1}{k+l\leq d}}i(k-l)V^{a_{1}\dots a_{k}\bar{b}_{1}\dots\bar{b}_{l}\sigma\dots\sigma}\partial_{a_{1}}\dots\partial_{a_{k}}\partial_{\bar{b}_{1}}\dots\partial_{\bar{b}_{l}}\partial_{\sigma}\dots\partial_{\sigma}+\\
+\sum_{\latop{k\geq1,l\geq1}{k+l\leq d}}(\partial^{(a_{1}}V^{a_{2}\dots a_{k})\bar{b}_{1}\dots\bar{b}_{l}\sigma\dots\sigma}+\partial^{(\bar{b}_{1}}V^{\bar{b}_{2}\dots\bar{b}_{l})a_{1}\dots a_{k}\sigma\dots\sigma})\partial_{a_{1}}\dots\partial_{a_{k}}\partial_{\bar{b}_{1}}\dots\partial_{\bar{b}_{l}}\partial_{\sigma}\dots\partial_{\sigma}
\end{gather*}
where the second and third row are special cases of the last two rows for $l=0$ and $k=0$, respectively. This should be the leading term of some operator of the form $\delta\Delta$. This is only possible, if
\begin{gather}\label{abara}
\partial^{(a_{1}}V^{a_{2}\dots a_{k})\bar{b}_{1}\dots\bar{b}_{l}}+\partial^{(\bar{b}_{1}}V^{\bar{b}_{2}\dots\bar{b}_{l})a_{1}\dots a_{k}}=g^{(a_{1}\bar{b}_{1}}\lambda^{a_{2}\dots a_{k}\bar{b}_{1}\dots\bar{b}_{l})}
\end{gather}
for some tensor $\lambda$ and $k,l\geq1$, $k+l=d+1$,
\begin{gather}\label{a,bara}
\partial^{(a_{1}}V^{a_{2}\dots a_{d+1})}=0\qquad\partial^{(\bar{b}_{1}}V^{\bar{b}_{2}\dots\bar{b}_{d+1})}=0
\end{gather}
\begin{gather}\label{asigma}
ikV^{a_{1}\dots a_{k}\sigma\dots\sigma}+\partial^{(a_{1}}V^{a_{2}\dots a_{k})\sigma\dots\sigma}=0
\end{gather}
for $1\leq k\leq d$,
\begin{gather}\label{barasigma}
-ilV^{\bar{b}_{1}\dots\bar{b}_{k}\sigma\dots\sigma}+\partial^{(\bar{b}_{1}}V^{\bar{b}_{2}\dots\bar{b}_{l})\sigma\dots\sigma}=0
\end{gather}
for $1\leq l\leq d$,
\begin{gather}\label{abarasigma}
i(k-l)V^{a_{1}\dots a_{k}\bar{b}_{1}\dots\bar{b}_{l}\sigma\dots\sigma}+\partial^{(a_{1}}V^{a_{2}\dots a_{k})\bar{b}_{1}\dots\bar{b}_{l}\sigma\dots\sigma}+\partial^{(\bar{b}_{1}}V^{\bar{b}_{2}\dots\bar{b}_{l})a_{1}\dots a_{k}\sigma\dots\sigma}=\\
=g^{(a_{1}|(\bar{b}_{1}}\lambda^{|a_{2}\dots a_{k})|\bar{b}_{1}\dots\bar{b}_{l})\sigma\dots\sigma}\nonumber
\end{gather}
for $1\leq k$, $1\leq l$, $k+l\leq d$ and some tensor $\lambda$.\par
We will proceed by induction. From equations (\ref{asigma}), (\ref{barasigma}) and (\ref{a,bara}) we see that
\begin{gather}\label{kerbgg1}
\partial^{(a_{1}}\dots\partial^{a_{d+1})}V^{\sigma\dots\sigma}=0\\
\partial^{(\bar{b}_{1}}\dots\partial^{\bar{b}_{d+1})}V^{\sigma\dots\sigma}=0\nonumber
\end{gather}
what is exactly that $V^{\sigma\dots\sigma}$ lies in the kernel of first BGG operator corresponding to $\oooo{d}{0}{0}{d}$. We also see that the terms $V^{a_{1}\dots a_{k}\sigma\dots\sigma}$ and $V^{\bar{b}_{1}\dots\bar{b}_{k}\sigma\dots\sigma}$ only depend on $V^{\sigma\dots\sigma}$. To compute the dependence explicitly, we use $k$ times the equation (\ref{asigma}) and (\ref{barasigma}), respectively. We get
\begin{gather*}
V^{a_{1}\dots a_{k}\sigma\dots\sigma}=\frac{i^{k}}{k!}\partial^{(a_{1}}\dots\partial^{a_{k})}V^{\sigma\dots\sigma}\\
V^{\bar{b}_{1}\dots\bar{b}_{k}\sigma\dots\sigma}=\frac{(-i)^{k}}{k!}\partial^{(\bar{b}_{1}}\dots\partial^{\bar{b}_{k})}V^{\sigma\dots\sigma}
\end{gather*}
Putting $V^{\sigma\dots\sigma}=0$ (this implies $V^{a_{1}\dots a_{k}\bar{b}_{1}\dots\bar{b}_{l}\sigma\dots\sigma}=0$ for $\mbox{min}(k,l)=0$), from equations (\ref{abarasigma}) we see that
\begin{gather}\label{kerbgg2}
\mbox{the trace-free part of}\quad\partial^{(a_{1}}\dots\partial^{a_{d-1}}V^{a_{d})\bar{b}_{1}\sigma\dots\sigma}=0\\
\mbox{the trace-free part of}\quad\partial^{(\bar{b}_{1}}\dots\partial^{\bar{b}_{d-1}}V^{\bar{b}_{d})a_{1}\sigma\dots\sigma}=0\nonumber
\end{gather}
what is exactly that $V^{a_{1}\bar{b}_{1}\sigma\dots\sigma}$ lies in the kernel of the first BGG operator corresponding to $\quad\oooo{d-2}{1}{1}{d-2}\quad$. We also see that the terms $V^{a_{1}\dots a_{k}\bar{b}_{1}\dots\bar{b}_{l}\sigma\dots\sigma}$ with $\mbox{min}(k,l)=1$ only depend on $V^{a_{1}\bar{b}_{1}\sigma\dots\sigma}$. To compute the dependence explicitly, we use $k$ times equation (\ref{abarasigma}). We get
\begin{gather*}
V^{a_{1}\dots a_{k+1}\bar{b}_{1}\sigma\dots\sigma}=\frac{i^{k}}{k!}\partial^{(a_{1}}\dots\partial^{a_{k}}V^{a_{k+1})\bar{b}_{1}\sigma\dots\sigma}\\
V^{a_{1}\bar{b}_{1}\dots\bar{b}_{k+1}\sigma\dots\sigma}=
\frac{(-i)^{k}}{k!}\partial^{(\bar{b}_{1}}\dots\partial^{\bar{b}_{k}}V^{\bar{b}_{k+1})a_{1}\sigma\dots\sigma}
\end{gather*}
Continuing this way, we see for each $s$ such that $0<2s\leq d$, that putting $V^{\sigma\dots\sigma}=\dots=V^{a_{1}\dots a_{s-1}\bar{b}_{1}\dots\bar{b}_{s-1}\sigma\dots\sigma}=0$, we have $V^{a_{1}\dots a_{k}\bar{b}_{1}\dots\bar{b}_{l}\sigma\dots\sigma}=0$ for $\mbox{min}(k,l)<s$. From equations (\ref{abarasigma}) we see that
\begin{gather}\label{kerbgg3}
\mbox{the trace-free part of}\quad\partial^{(a_{1}}\dots\partial^{a_{d+1-2s}}V^{a_{d+2-2s}\dots a_{d+1-s})\bar{b}_{1}\dots\bar{b}_{s}\sigma\dots\sigma}=0\\
\mbox{the trace-free part of}\quad\partial^{(\bar{b}_{1}}\dots\partial^{\bar{b}_{d+1-2s}}V^{\bar{b}_{d+2-2s}\dots\bar{b}_{d+1-s}a_{1}\dots a_{s}\sigma\dots\sigma}=0\nonumber
\end{gather}
what is exactly the first BGG operator corresponding to $\quad\oooo{d-2s}{s}{s}{d-2s}\quad$. We also see that the terms $V^{a_{1}\dots a_{k}\bar{b}_{1}\dots\bar{b}_{l}\sigma\dots\sigma}$ with $\mbox{min}(k,l)=s$ only depend on $V^{a_{1}\dots a_{s}\bar{b}_{1}\dots\bar{b}_{s}\sigma\dots\sigma}$. To compute the dependence explicitly, we use $k$ times equation (\ref{abarasigma}). We get
\begin{gather*}
V^{a_{1}\dots a_{k+s}\bar{b}_{1}\dots\bar{b}_{s}\sigma\dots\sigma}=\frac{i^{k}}{k!}\partial^{(a_{1}}\dots\partial^{a_{k}}V^{a_{k+1}\dots a_{k+s})\bar{b}_{1}\dots\bar{b}_{s}\sigma\dots\sigma}\\
V^{a_{1}\dots a_{s}\bar{b}_{1}\dots\bar{b}_{k+s}\sigma\dots\sigma}=
\frac{(-i)^{k}}{k!}\partial^{(\bar{b}_{1}}\dots\partial^{\bar{b}_{k}}V^{\bar{b}_{k+1}\dots\bar{b}_{k+s})a_{1}\dots a_{s}\sigma\dots\sigma}
\end{gather*} 
\end{itemize}
\end{proof}
\begin{remark}
From the proof it is easy to see why we have introduced the finer filtration. It comes from the structure of PDE's for the symbol.
\end{remark}
\begin{remark}
The representation $\oooo{a}{b}{b}{a}$ of $SU(p+1,g+1)$ is simply the Cartan product of $\oooo{a}{b}{0}{0}$ with its dual/conjugate. The representation $\oooo{a}{b}{0}{0}$ is an irreducible subrepresentation of $\otimes^{d}\mathbb{C}^{n+2}$ with symmetries given by Young tableau
\begin{displaymath}
\tab
\end{displaymath} 
with $b$ columns with two boxes and $a$ columns with one box (the total number of boxes is $d$).
\end{remark}

\section{Ambient construction}

\subsection{Ambient space}

\begin{definition}
Let $M$ be as above. The \emph{ambient space} for $M$ is $\mathbb{C}^{n+2}$ $(z^{0},z^{1},\dots,z^{n},z^{\infty})$ with non-degenerate Hermitean metric $g_{\bar{A}B}$ of the form
\begin{displaymath}
\left(\begin{matrix}
0&0&1\\
0&g_{\bar{a}b}&0\\
1&0&0
\end{matrix}\right)
\end{displaymath}
We will denote
\begin{displaymath}
x^{A}=\left(\begin{matrix}
x^{0}\\x^{a}\\x^{\infty} 
\end{matrix}\right)
\end{displaymath}
\end{definition}
The term \emph{ambient} will be used when referring to the objects defined on some open subset of $\mathbb{C}^{n+2}$. The ambient Laplace operator will be distinguished by tilde $\tilde{\Delta}f=g^{A\bar{B}}\partial_{A}\partial_{\bar{B}}$.
\begin{definition}
Let
\begin{equation}\label{r}
r=g_{A\bar{B}}x^{A}x^{\bar{B}}
\end{equation}
be the quadratic form associated to the ambient metric $g_{A\bar{B}}$. The \emph{null cone} $\mathcal{N}$ is the zero set of $r$.
\begin{displaymath}
\mathcal{N}=\left\{x\in\mathbb{C}^{n+2}|r(x)=0\right\}
\end{displaymath}
\end{definition}
Now consider the mapping $\phi:M\rightarrow\mathbb{C}^{n+2}$ given by
\begin{displaymath}
(z^{a},i\sigma)\mapsto\left(\begin{matrix}
1\\z^{a}\\-\frac{z^{a}z_{a}}{2}+i\sigma
\end{matrix}\right)=:\phi^{A}
\end{displaymath}
The mapping $\phi$ is actually a restriction to $M$ of the embedding $\imath:\mathbb{C}^{n+1}\rightarrow\mathbb{C}^{n+2}$ given by $(z^{1},\dots,z^{n},z^{\infty})\mapsto(1,z^{1},\dots,z^{n},z^{\infty})$. It is easily seen that $\phi(M)$ lies on the null cone and that this characterizes $M$ in $\mathbb{C}^{n+1}$.
\begin{definition}
Let $z^{0}\in\mathbb{C}$, $\rho,\sigma\in\mathbb{R}$ and $z^{a}\in\mathbb{C}^{n}$.\\
\parbox{4cm}{\begin{displaymath}
X^{A}=\left(\begin{matrix}
z^{0}\\z^{0}z^{a}\\z^{0}(\rho-\frac{z^{a}z_{a}}{2}+i\sigma)
\end{matrix}\right)
\end{displaymath}}
\hfill\parbox{4cm}{\begin{displaymath}
Y^{A}_{b}=\partial_{b}X^{A}=\left(\begin{matrix}
0_{b}\\z^{0}\delta^{a}_{b}\\-z^{0}z_{b}
\end{matrix}\right)
\end{displaymath}}\\
\parbox{4cm}{\begin{displaymath}
Y^{A}_{\bar{b}}=\partial_{\bar{b}}X^{A}=0
\end{displaymath}}
\hfill\parbox{5cm}{\begin{displaymath}
Z^{A}=-\frac{1}{n}\partial^{b}Y^{A}_{b}=\left(\begin{matrix}
0\\0^{a}\\z^{0}
\end{matrix}\right)
\end{displaymath}}\\
Similarly for $X^{\bar{A}}$, $Y^{\bar{A}}_{b}$, $Y^{\bar{A}}_{\bar{b}}$, $Z^{\bar{A}}$, $X_{A}$, $Y^{b}_{A}$, $Y^{\bar{b}}_{A}$, $Z_{A}$, $X_{\bar{A}}$, $Y^{b}_{\bar{A}}$, $Y^{\bar{b}}_{\bar{A}}$, $Z_{\bar{A}}$, and other variations, respectively.
\end{definition}
\begin{lemma}
\begin{equation}\label{id}
|z^{0}|^{2}\delta^{A}_{B}=(X^{A}-\rho Z^{A})Z_{B}+Z^{A}(X_{B}-\rho Z_{B})+Y^{A}_{c}Y^{c}_{B}
\end{equation}
\end{lemma}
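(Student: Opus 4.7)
I would prove this identity as an explicit completeness relation for a distinguished frame of $\mathbb{C}^{n+2}$. The plan is a direct computation in coordinates. First I would write down the lowered companion objects using the block form of $g_{A\bar{B}}$ and the Hermitian lowering convention $U_{B}=g_{B\bar{C}}\overline{U^{C}}$:
\begin{gather*}
Z_{B}=(\bar{z}^{0},\,0,\,0),\qquad X_{B}=\bigl(\bar{z}^{0}(\rho-\tfrac{z^{c}z_{c}}{2}-i\sigma),\,\bar{z}^{0}z_{b},\,\bar{z}^{0}\bigr),\\
Y^{c}_{B}=\bigl(-\bar{z}^{0}z^{c},\,\bar{z}^{0}\delta^{c}_{b},\,0\bigr).
\end{gather*}
Each of these carries exactly one factor of $\bar{z}^{0}$, mirroring the single $z^{0}$ in the upstairs vectors $X^{A},\,Z^{A},\,Y^{A}_{c}$, which is what produces the universal $|z^{0}|^{2}$ on the right-hand side.

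Setting $V^{A}:=X^{A}-\rho Z^{A}$ and $W^{A}:=Z^{A}$, the coordinate expressions immediately show that $\{V^{A},\,W^{A},\,Y^{A}_{1},\dots,Y^{A}_{n}\}$ is a basis of $\mathbb{C}^{n+2}$ wherever $z^{0}\neq 0$. The asserted identity is then precisely the completeness relation saying that $\{Z_{B},\,X_{B}-\rho Z_{B},\,Y^{c}_{B}\}$ is the dual coframe rescaled by $|z^{0}|^{2}$. It therefore suffices to verify the nine contractions: the three ``diagonal'' ones
\begin{gather*}
Z_{B}V^{B}=|z^{0}|^{2},\quad (X_{B}-\rho Z_{B})W^{B}=|z^{0}|^{2},\quad Y^{c}_{B}Y^{B}_{c'}=|z^{0}|^{2}\delta^{c}_{c'},
\end{gather*}
together with the six off-diagonal pairings, which all vanish. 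Each is a one-line coordinate check; the only one requiring any algebra is the combination $X_{B}X^{B}=2\rho|z^{0}|^{2}$ with $X_{B}Z^{B}=|z^{0}|^{2}$ (the first of which uses the cancellation $-z^{c}z_{c}+z^{c}z_{c}=0$ between the off-diagonal and middle blocks of $g_{A\bar{B}}$), which yields $(X_{B}-\rho Z_{B})V^{B}=0$ after the $\rho$-terms cancel.

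With these pairings in hand, the elementary fact that a frame and its dual coframe satisfy $\sum_{i}e^{A}_{i}e^{i}_{B}=\delta^{A}_{B}$ delivers the stated identity, with the overall factor $|z^{0}|^{2}$ on the right. The only real obstacle is bookkeeping of the Hermitian index conventions---in particular for $Y^{c}_{B}$, where raising the CR index $c$ via $g^{c\bar{d}}$ effectively conjugates the $Y^{A}_{d}$ entries before the $A$-index is lowered; once these conventions are pinned down in Step 1, all nine contractions are immediate.
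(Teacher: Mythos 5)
The paper states this lemma without proof, so there is nothing in the source to compare against; what matters is whether your verification stands on its own, and it does. Your coordinate expressions for $Z_B$, $X_B-\rho Z_B$ and, in particular, for $Y^c_B$ are correct: following the Hermitian lowering/raising conventions you flag, one gets $Y^c_{0}=-\bar z^{0}z^{c}$, $Y^c_{b}=\bar z^{0}\delta^{c}_{b}$, $Y^c_{\infty}=0$, and likewise $Z_B=(\bar z^0,0,0)$, $X_B=(\bar z^0(\rho-\tfrac{z^{c}z_{c}}{2}-i\sigma),\,\bar z^0 z_b,\,\bar z^0)$. All nine pairings come out as you state: $Z_B(X^B-\rho Z^B)=|z^0|^2$, $(X_B-\rho Z_B)Z^B=|z^0|^2$, $Y^c_BY^B_{c'}=|z^0|^2\delta^{c}_{c'}$, and the six cross terms vanish; the only nontrivial one is $(X_B-\rho Z_B)(X^B-\rho Z^B)=0$, which follows from $X_BX^B=2\rho|z^0|^2$ and $X_BZ^B=Z_BX^B=|z^0|^2$, with the cancellation of the $z^{c}z_{c}$ terms between the off-diagonal and middle blocks exactly as you describe. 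Since $\{X^A-\rho Z^A,\,Z^A,\,Y^A_1,\dots,Y^A_n\}$ is a basis of $\mathbb{C}^{n+2}$ for $z^0\neq0$, these pairings identify $\{Z_B,\,X_B-\rho Z_B,\,Y^c_B\}$ as $|z^0|^2$ times the dual coframe, and the asserted identity is precisely the resulting reproducing relation $\sum_i e^A_i\,e^i_B=\delta^A_B$. This is the natural and complete argument; no gap.
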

The mapping
\begin{displaymath}
\Phi(z^{0},z^{a},z^{\infty})=\left(\begin{matrix}
z^{0}\\z^{0}z^{a}\\z^{0}(z^{\infty}-\frac{z^{a}z_{a}}{2})
\end{matrix}\right)=\left(\begin{matrix}
y^{0}\\y^{a}\\y^{\infty}
\end{matrix}\right)
\end{displaymath}
where $z^{\infty}=\rho+i\sigma$, defines a change of coordinates, which, however, is smooth, but not holomorphic. We see that $\phi(z^{a},i\sigma)=\Phi(1,z^{a},i\sigma)$ and the identity (\ref{id}) simplifies on the image of $\phi$ to
\begin{displaymath}
\delta^{A}_{B}=X^{A}Z_{B}+Z^{A}X_{B}+Y^{A}_{c}Y^{c}_{B}
\end{displaymath}
Similarly for $\delta^{\bar{A}}_{\bar{B}}$:
\begin{displaymath}
\delta^{\bar{A}}_{\bar{B}}=X^{\bar{A}}Z_{\bar{B}}+Z^{\bar{A}}X_{\bar{B}}+Y^{\bar{A}}_{\bar{c}}Y^{\bar{c}}_{\bar{B}}
\end{displaymath}
\begin{lemma}
The operator $\mathbb{E}=x^{C}\partial_{C}$ in the new coordinates is equal to
\begin{equation}
\mathbb{E}=z^{0}\partial_{z^{0}}
\end{equation}
\end{lemma}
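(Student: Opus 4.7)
The key subtlety is that, although the paper calls $\Phi$ a \emph{change of coordinates}, it is only smooth and not holomorphic: the term $z^{a}z_{a}=g_{\bar{b}a}z^{a}\bar{z}^{b}$ in $y^{\infty}=z^{0}(z^{\infty}-\tfrac{1}{2}z^{a}z_{a})$ depends on the antiholomorphic variables. Thus inverting the coordinate change gives
\begin{displaymath}
z^{0}=y^{0},\qquad z^{a}=\frac{y^{a}}{y^{0}},\qquad z^{\infty}=\frac{y^{\infty}}{y^{0}}+\frac{1}{2}\,g_{\bar{b}a}\,\frac{y^{a}y^{\bar{b}}}{y^{0}y^{\bar{0}}},
\end{displaymath}
so $z^{\infty}$ depends nontrivially on $y^{\bar{A}}$ as well as on $y^{A}$. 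My plan is therefore a direct chain-rule computation, being careful about these antiholomorphic cross-dependencies.

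First I would evaluate the three holomorphic derivatives in the new coordinates. A short calculation gives
\begin{gather*}
\partial_{y^{0}}=\partial_{z^{0}}-\frac{z^{a}}{z^{0}}\partial_{z^{a}}-\frac{z^{\infty}}{z^{0}}\partial_{z^{\infty}},\qquad
\partial_{y^{a}}=\frac{1}{z^{0}}\partial_{z^{a}}+\frac{z_{a}}{2z^{0}}\partial_{z^{\infty}},\qquad
\partial_{y^{\infty}}=\frac{1}{z^{0}}\partial_{z^{\infty}},
\end{gather*}
where the cross-term $\tfrac{z_{a}}{2z^{0}}\partial_{z^{\infty}}$ in $\partial_{y^{a}}$ comes precisely from differentiating the non-holomorphic piece of $z^{\infty}$. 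Then I would substitute into $\mathbb{E}=y^{0}\partial_{y^{0}}+y^{a}\partial_{y^{a}}+y^{\infty}\partial_{y^{\infty}}$ using $y^{0}=z^{0}$, $y^{a}=z^{0}z^{a}$ and $y^{\infty}=z^{0}(z^{\infty}-\tfrac{1}{2}z^{a}z_{a})$, and check term by term that the coefficients of $\partial_{z^{a}}$ and $\partial_{z^{\infty}}$ vanish.

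The main (and essentially only) obstacle is bookkeeping of the $\partial_{z^{\infty}}$ contributions: $y^{0}\partial_{y^{0}}$ contributes $-z^{\infty}\partial_{z^{\infty}}$, $y^{a}\partial_{y^{a}}$ contributes $\tfrac{1}{2}z^{a}z_{a}\partial_{z^{\infty}}$ (the fruit of the non-holomorphic cross-term), and $y^{\infty}\partial_{y^{\infty}}$ contributes $(z^{\infty}-\tfrac{1}{2}z^{a}z_{a})\partial_{z^{\infty}}$, with sum zero. The $\partial_{z^{a}}$ contributions cancel immediately as $-z^{a}+z^{a}=0$. What remains is $z^{0}\partial_{z^{0}}$, proving the lemma.
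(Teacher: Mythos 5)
Your proof goes in the opposite direction from the paper's: the paper simply differentiates $f\circ\Phi$ with respect to $z^{0}$ and applies the chain rule, observing that $\partial_{z^{0}}y^{A}=(1,z^{a},z^{\infty}-\tfrac{1}{2}z^{a}z_{a})$ while $\partial_{z^{0}}y^{\bar{A}}=0$ (each $y^{\bar{A}}$ depends only on $\bar{z}^{0},\bar{z}^{a},\bar{z}^{\infty},z^{a}$, not on $z^{0}$), from which $z^{0}\partial_{z^{0}}f=(y^{A}\partial_{A}f)\circ\Phi$ drops out in one line with no antiholomorphic bookkeeping at all. You instead invert $\Phi$ and push the $\partial_{y^{A}}$ to $\partial_{z^{B}}$ — a valid strategy, and you correctly identify the non-holomorphicity of the change of variables as the danger — but your intermediate formulas are incomplete in exactly the place you were trying to be careful.

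Since $z^{a}z_{a}$ is real, both $z^{\infty}$ \emph{and} $\bar{z}^{\infty}$ acquire the same non-holomorphic piece $\tfrac{1}{2}y^{a}y_{a}/(y^{0}y^{\bar{0}})$ in the $y$-coordinates. You caught the resulting cross-term in $z^{\infty}$ (giving $\tfrac{z_{a}}{2z^{0}}\partial_{z^{\infty}}$ inside $\partial_{y^{a}}$), but not the companion cross-term coming from $\bar{z}^{\infty}$. The correct expansions of the Wirtinger derivatives carry additional $\partial_{\bar{z}^{\infty}}$ pieces:
\begin{gather*}
\partial_{y^{0}}=\partial_{z^{0}}-\frac{z^{a}}{z^{0}}\partial_{z^{a}}-\frac{z^{\infty}}{z^{0}}\partial_{z^{\infty}}-\frac{z^{a}z_{a}}{2z^{0}}\partial_{\bar{z}^{\infty}},\qquad
\partial_{y^{a}}=\frac{1}{z^{0}}\partial_{z^{a}}+\frac{z_{a}}{2z^{0}}\partial_{z^{\infty}}+\frac{z_{a}}{2z^{0}}\partial_{\bar{z}^{\infty}}.
\end{gather*}
These extra pieces do cancel in $y^{0}\partial_{y^{0}}+y^{a}\partial_{y^{a}}+y^{\infty}\partial_{y^{\infty}}$ (the $y^{0}$ term contributes $-\tfrac{1}{2}z^{a}z_{a}\,\partial_{\bar{z}^{\infty}}$ and the $y^{a}$ term contributes $+\tfrac{1}{2}z^{a}z_{a}\,\partial_{\bar{z}^{\infty}}$), so your final answer $\mathbb{E}=z^{0}\partial_{z^{0}}$ is correct — but this cancellation must actually be checked, and your write-up neither contains the missing terms nor verifies they vanish. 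Beyond fixing that, it is worth noting that the paper's forward computation avoids the inversion of $\Phi$ entirely and sidesteps the antiholomorphic issue because only $\partial_{z^{0}}y^{\bar{A}}$ is needed, and that vanishes identically.
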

\begin{proof}
For $f(y^{A},y^{\bar{A}})\in\mathcal{C}^{\infty}(\mathbb{C}^{n+2})$ we have
\begin{eqnarray*}
z^{0}\partial_{z^{0}}f&=&z^{0}(\frac{\partial f}{\partial y^{0}}+z^{a}\frac{\partial f}{\partial y^{a}}+(z^{\infty}-\frac{z^{a}z_{a}}{2})
\frac{\partial f}{\partial y^{\infty}})\\
&=&y^{0}(\frac{\partial f}{\partial y^{0}}+\frac{y^{a}}{y^{0}}\frac{\partial f}{\partial y^{a}}+\frac{y^{\infty}}{y^{0}}
\frac{\partial f}{\partial y^{\infty}})\\
&=&(y^{A}\partial_{A}f)\circ\Phi
\end{eqnarray*}
\end{proof}
Here are some identities we will need later:
\begin{eqnarray}\label{der}
Y^{A}_{q}\partial_{A}Y^{\bar{B}}_{\bar{r}}=-g_{q\bar{r}}Z^{\bar{B}}&\qquad&
Y^{\bar{B}}_{\bar{r}}\partial_{\bar{B}}Y^{A}_{q}=-g_{q\bar{r}}Z^{A}\\
Z^{B}\partial_{B}=\partial_{z^{\infty}}&\qquad&Y^{A}_{q}\partial_{A}=\partial_{q}\nonumber\\
Z^{\bar{B}}\partial_{\bar{B}}=\partial_{z^{\bar{\infty}}}&\qquad&Y^{\bar{B}}_{\bar{r}}\partial_{\bar{B}}=\partial_{\bar{r}}\nonumber
\end{eqnarray}

\subsection{Ambient construction of the sub-Laplacian}

From now on (since in the second part we will use the representation theory of $SL(n+2,\mathbb{C})$) we will use the ambient metric to identify $\bar{V}$ with $V^{*}$, so we will replace all ambient upper (lower) barred indices by ambient lower (upper) indices. This will mean that $\mathbb{E}=X^{A}\partial_{A}$, $\bar{\mathbb{E}}=X_{A}\partial^{A}$, $\tilde{\Delta}=\partial_{A}\partial^{A}=\partial^{A}\partial_{A}$, and $r=X^{A}X_{A}=X_{A}X^{A}$, respectively.
\begin{definition}
Suppose that $F$ is a smooth complex-valued function defined on a neighbourhood of the origin in $M$. Then for any pair $(w_{1},w_{2})\in\mathbb{C}^{2}$, s.t. $w_{1}-w_{2}\in\mathbb{Z}$
\begin{equation}\label{F}
f(\Phi(z^{0},z^{a},i\sigma))=(z^{0})^{w_{1}}(z^{\bar{0}})^{w_{2}}F(z^{a},i\sigma)
\end{equation}
defines a smooth function on a conical neighbourhood of $(1,0,0)$ in the null-cone $\mathcal{N}$. Conversely, $F$ may be recovered from $f$ by setting $z^{0}=1$. Hence, for fixed $(w_{1},w_{2})$, the functions $F$ and $f$ are equivalent.
\end{definition}
\begin{remark}
If we view $\mathcal{N}\setminus\{x=(z^{0},\dots,z^{\infty})\in\mathcal{N}:z^{0}=0\}$ as a principal $\mathbb{C}^{\times}$-bundle over $M$, then we can represent the sections of $\mathcal{E}(w_{1},w_{2})$ as equivariant functions on the null-cone. But these are exactly the $(w_{1},w_{2})$-homogeneous functions as defined above. Having in mind that we are working with the very flat Weyl structure on $M$, we can identify densities of arbitrary weights with functions when working on $M$.
\end{remark}
We want to use the ambient construction to represent differential operators on $M$ by much simpler ambient differential operators. In order to be able to apply ambient differential operators to $f$, we need to extend it from the null-cone to the whole space or at least to some open (in $\mathbb{C}^{n+2}$) neighbourhood of $(1,0,0)$. There are infinitely many choices for such an extension even if we restrict ourselves to the homogeneous ones. Nevertheless, any two such extensions will differ by a very convenient factor.
\begin{lemma}
Let $f$ and $\hat{f}$ be two smooth $(w_{1},w_{2})$-homogeneous extensions of $F$ on some open neighbourhood of $(1,0,0)$. Then there exists a smooth $(w_{1}-1,w_{2}-1)$-homogeneous function $h$such that $(f-\hat{f})(y^{A})=r(y^{A})h(y^{A})$, where $r$ is defined by (\ref{r}).
\end{lemma}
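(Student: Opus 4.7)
The plan is to reduce the claim to a standard divisibility lemma for smooth functions, then check the homogeneity bookkeeping. Set $g = f - \hat f$. Since both $f$ and $\hat f$ extend $F$ and $\phi(M)$ lies on the null cone $\mathcal{N}$, the difference $g$ is a smooth $(w_{1},w_{2})$-homogeneous function defined on an open neighborhood $U$ of $(1,0,\dots,0)$ in $\mathbb{C}^{n+2}$ that vanishes identically on $\mathcal{N}\cap U$. The goal is to produce a smooth $h$ on (possibly a smaller) $U$ such that $g=rh$ and then verify that $h$ inherits weight $(w_{1}-1,w_{2}-1)$.

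The first step is to observe that $\mathcal{N}$ is a smooth real hypersurface near $(1,0,\dots,0)$. From the explicit form of the ambient metric, $r = x^{0}\bar x^{\infty} + x^{\infty}\bar x^{0} + g_{a\bar b}x^{a}\bar x^{b}$, so $\partial r/\partial x^{\infty} = \bar x^{0}$, which equals $1$ at the base point. Thus $dr\neq 0$ on a neighborhood of $(1,0,\dots,0)$, and (viewing everything as real-smooth on $\mathbb{R}^{2(n+2)}$) the real-valued function $r$ can be completed to a smooth local coordinate system on $U$.

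The second step is a pointwise Hadamard-type division argument. In the coordinate system just described, with $r$ as the first coordinate, write $g(r,u) = g(0,u) + \int_{0}^{1} \tfrac{d}{dt}g(tr,u)\,dt = r\int_{0}^{1}(\partial_{r}g)(tr,u)\,dt$, where the first summand vanishes because $g\equiv 0$ on $\{r=0\}$. Defining $h(y^{A})$ as this last integral yields a smooth function on $U$ satisfying $g = rh$. (Alternatively one can invoke the standard smooth division lemma: a smooth function vanishing on a smooth hypersurface $\{r=0\}$ with $dr\neq 0$ is a smooth multiple of $r$.)

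Finally, to pin down the homogeneity of $h$, recall that $r$ itself is $(1,1)$-homogeneous, because under the $\mathbb{C}^{\times}$-action $x^{A}\mapsto \lambda x^{A}$, $\bar x^{A}\mapsto\bar\lambda\bar x^{A}$ one has $r\mapsto \lambda\bar\lambda\, r$. On the open dense subset $U\setminus\mathcal{N}$ we may write $h = g/r$, and the $(w_{1},w_{2})$-homogeneity of $g$ together with the $(1,1)$-homogeneity of $r$ force $h$ to be $(w_{1}-1,w_{2}-1)$-homogeneous there. Since $h$ is continuous on all of $U$ and $U\setminus\mathcal{N}$ is dense, the homogeneity identity extends by continuity to all of $U$, completing the proof. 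The only real content is the divisibility step; the homogeneity transfer is automatic once the quotient is known to be smooth.
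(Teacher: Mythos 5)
Your proof is correct and follows essentially the same route as the paper: both reduce the statement to a Hadamard-type division argument in a coordinate system where $r$ is one of the real coordinates, and then read off the homogeneity of $h$ from the $(1,1)$-homogeneity of $r$. Your justification of the coordinate change via $dr\neq 0$ at $(1,0,\dots,0)$ and your density-plus-continuity argument for the homogeneity of $h$ are just slightly more explicit versions of what the paper does implicitly when it writes down the coordinate change $(y^{0},y^{a},y^{\infty})\mapsto(y^{0},y^{a},r+ip)$ and asserts the homogeneity of $h$.
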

\begin{proof}
If we perform coordinate transformation
\begin{displaymath}
(y^{0},y^{a},y^{\infty})\mapsto(y^{0},y^{a},r+ip)=(y^{0},y^{a},2y^{0}y^{\bar{\infty}}+y^{a}y_{a})
\end{displaymath}
we will be dealing with 2 functions equal on the  real hyperplane $r=0$. For any smooth complex-valued function $k$ on $\mathbb{C}^{n+2}$ holds
\begin{eqnarray*}
k(y^{0},y^{a},r+ip)=k(y^{0},y^{a},ip)+\int^{1}_{0}\frac{d}{dt}k(y^{0},y^{a},tr+ip)dt=\\
=k(y^{0},y^{a},ip)+r\int^{1}_{0}\frac{\partial k}{\partial(r+ip)}(y^{0},y^{a},tr+ip)+\frac{\partial k}{\partial(r-ip)}(y^{0},y^{a},tr+ip)dt
\end{eqnarray*}
So if we take $k$ as the difference of two $(w_{1},w_{2})$-homogeneous extensions of $F$, we will have $k(y^{0},y^{a},ip)=0$ and thus it follows that $f-\hat{f}=rh$. This $h$ has homogeneity $(w_{1}-1,w_{2}-1)$, because $r$ has homogeneity $(1,1)$. 
\end{proof}
\begin{remark}\label{chain}
The classical chain rule formula gives
\begin{eqnarray*}
\partial_{a}F=\partial_{a}(f\circ\phi)=(\partial_{a}\phi^{B}\partial_{B}f+\partial_{a}\phi_{B}\partial^{B}f)\circ\phi=\\
(Y^{B}_{a}\partial_{B}f)\circ\phi=(\partial_{a}f)\circ\phi\\
\partial_{\bar{a}}F=\partial_{\bar{a}}(f\circ\phi)=(\partial_{\bar{a}}\phi^{B}\partial_{B}f+\partial_{\bar{a}}\phi_{B}\partial^{B}f)\circ\phi=\\
(Y_{B\bar{a}}\partial^{B}f)\circ\phi=(\partial_{\bar{a}}f)\circ\phi\\
\partial_{\sigma}F=\partial_{\sigma}(f\circ\phi)=(\partial_{\sigma}\phi^{A}\partial_{A}f+\partial_{\sigma}\phi_{A}\partial^{A}f)\circ\phi=\\
(i\partial_{z^{\infty}}f-i\partial_{z^{\bar{\infty}}}f)\circ\phi=(\partial_{\sigma}f)\circ\phi
\end{eqnarray*}
\end{remark}
\begin{lemma}
For homogeneous function $h$ on $\mathbb{C}^{n+2}$ of bidegree $(w_{1}-1,w_{2}-1)$ holds
\begin{displaymath}
\tilde{\Delta}(rh)=r\tilde{\Delta}h+(n+w_{1}+w_{2})h
\end{displaymath}
\end{lemma}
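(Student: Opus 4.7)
The plan is to expand $\tilde{\Delta}(rh) = \partial_A \partial^A(rh)$ by two applications of the Leibniz rule, then use three elementary facts: the derivatives of $r$, the value of $\tilde{\Delta}r$, and the two Euler identities coming from bidegree homogeneity.

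First I would record the identities
\begin{gather*}
\partial_A r = X_A, \qquad \partial^A r = X^A, \qquad \tilde{\Delta}r = n+2.
\end{gather*}
These follow directly from $r = X^B X_B$ and from $\partial_A X^B = \delta_A^B$, $\partial^A X_B = \delta_B^A$ (with $\partial_A X_B = 0$ and $\partial^A X^B = 0$), using the convention in which the metric has been used to identify the conjugate coordinates with dual indices. The last identity comes from $\tilde{\Delta}r = \partial_A X^A = \delta_A^A = n+2$.

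Next I would invoke the two homogeneity relations: for $h$ of bidegree $(w_1-1, w_2-1)$ the previous lemma and the discussion preceding it give
\begin{gather*}
X^A \partial_A h = (w_1 - 1)\,h, \qquad X_A \partial^A h = (w_2 - 1)\,h.
\end{gather*}
With these in hand the computation is purely mechanical:
\begin{align*}
\tilde{\Delta}(rh) &= \partial_A \partial^A(rh) \\
 &= \partial_A\bigl(r\,\partial^A h + (\partial^A r)\,h\bigr) \\
 &= (\partial_A r)\,\partial^A h + r\,\partial_A \partial^A h + (\partial_A X^A)\,h + X^A\,\partial_A h \\
 &= X_A \partial^A h + r\,\tilde{\Delta}h + (n+2)\,h + X^A \partial_A h \\
 &= r\,\tilde{\Delta}h + \bigl[(w_2 - 1) + (n+2) + (w_1 - 1)\bigr]\,h \\
 &= r\,\tilde{\Delta}h + (n + w_1 + w_2)\,h,
\end{align*}
which is the claim.

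There is no real obstacle here; the only thing to be careful about is the index-raising convention introduced at the start of Section 4.2, which ensures that both $\partial_A r = X_A$ and $\partial^A r = X^A$ hold as written and that the two Euler operators $\mathbb{E} = X^A \partial_A$ and $\bar{\mathbb{E}} = X_A \partial^A$ measure the two homogeneities separately. Once that bookkeeping is fixed, the identity is a one-line Leibniz computation.
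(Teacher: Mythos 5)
Your proposal is correct and follows essentially the same route as the paper: a double Leibniz expansion of $\partial_A\partial^A(rh)$, then the identities $\partial_A r = X_A$, $\partial^A r = X^A$, $\partial_A X^A = n+2$, and the two Euler relations $X^A\partial_A h = (w_1-1)h$, $X_A\partial^A h = (w_2-1)h$. The only difference is that you write out the intermediate Leibniz step slightly more explicitly; the computation is the same.
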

\begin{proof}
\begin{eqnarray*}
\tilde{\Delta}(rh)&=&\partial_{A}\partial^{A}(rh)=\partial_{A}(x^{A}h+r\partial^{A}h)=\\
&=&(n+2)h+x^{A}\partial_{A}h+\delta^{B}_{A}x_{B}\partial^{B}h+r\tilde{\Delta}h=\\
&=&(n+2)h+(w_{1}-1)h+(w_{2}-1)h+r\tilde{\Delta}h=\\
&=&r\tilde{\Delta}h+(n+w_{1}+w_{2})h
\end{eqnarray*}
\end{proof}
It immediately follows that for $n+w_{1}+w_{2}=0$, then $\tilde{\Delta}f|_{\mathcal{N}}$ depends only on the restriction of $f$ to the null-cone and hence it depends only on $F$. This defines a differential operator on $M$.
\begin{theorem}
Let $F$ be a smooth complex-valued function on some open neighbourhood of $0\in M$ and let $f$ be the smooth homogeneous function of bidegree $(w_{1},w_{2})$ that corresponds to $F$ via (\ref{F}) and is defined on some open neighbourhood of $(1,0,0)\in\mathbb{C}^{n+2}$. Then the following equality holds
\begin{displaymath}
(\tilde{\Delta}f)\circ\phi=\Delta F
\end{displaymath} 
where $\Delta$ is the CR sub-Laplacian.
\end{theorem}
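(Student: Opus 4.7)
By the preceding lemma, the hypothesis $n+w_1+w_2=0$ guarantees that $(\tilde\Delta f)\circ\phi$ depends only on $F$, not on the chosen $(w_1,w_2)$-homogeneous extension, so I am free to choose a convenient one. I would take
\begin{equation*}
f(y)=(y^0)^{w_1}(y^{\bar 0})^{w_2}\, F\Bigl(\tfrac{y^a}{y^0},\ \tfrac{y^{\bar a}}{y^{\bar 0}},\ \tfrac12\bigl(\tfrac{y^\infty}{y^0}-\tfrac{y^{\bar\infty}}{y^{\bar 0}}\bigr)\Bigr),
\end{equation*}
whose three interior arguments all have weight $(0,0)$ (so $f$ is genuinely $(w_1,w_2)$-homogeneous), and which restricts on $\phi(M)$ (where $y^0=y^{\bar 0}=1$ and the third argument equals $i\sigma$) to $F(z^a,i\sigma)$ as required by (\ref{F}).

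I would then compute $\tilde\Delta f=\partial_0\partial_{\bar\infty}f+\partial_\infty\partial_{\bar 0}f+g^{a\bar b}\partial_a\partial_{\bar b}f$ in these ambient coordinates. The middle block contributes, after restriction to $\phi(M)$ and the chain-rule identifications of Remark~\ref{chain}, the principal CR piece. The off-diagonal blocks require differentiating through the $y^0, y^{\bar 0}$ factors and through the three interior arguments; upon restriction the null-cone equation gives $(y^\infty/y^0)+(y^{\bar\infty}/y^{\bar 0})=-z^a z_a$, and the chain-rule substitutions $F_\tau=-i\partial_\sigma F$, $F_{\tau\tau}=-\partial_\sigma^2 F$, and $F_{\tau\zeta^a}=-i\partial_{z^a}|_M\partial_\sigma F$ (plus conjugate) rewrite the result in $M$-native derivatives.

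In parallel, I would expand the sub-Laplacian by decomposing each CR tangent vector as $\partial_a=\partial_{z^a}|_M+\tfrac{iz_a}{2}\partial_\sigma$ and $\partial_{\bar b}=\partial_{\bar z^b}|_M-\tfrac{iz_{\bar b}}{2}\partial_\sigma$, computing $g^{a\bar b}(\partial_a\partial_{\bar b}+\partial_{\bar b}\partial_a)F+i\tfrac{w_1-w_2}{2}\partial_\sigma F$ in the same variables, and matching term by term against the ambient computation. The coefficient $i(w_1-w_2)/2$ of $\partial_\sigma F$ on the sub-Laplacian side arises precisely from the $(w_2-w_1)F_\tau$ term produced by the off-diagonal ambient block after differentiating through the $(y^0)^{w_1}(y^{\bar 0})^{w_2}$ factors. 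The main obstacle is bookkeeping: four distinct families of terms ($g^{a\bar b}\partial_{z^a}|_M\partial_{\bar z^b}|_M F$, the pair $z^a\partial_\sigma\partial_{z^a}|_M F$ and its conjugate, $(z^a z_a)\partial_\sigma^2 F$, and $\partial_\sigma F$) must be matched with the correct coefficients, and the various $\tfrac12$ factors coming from the form of the extension have to line up against the symmetrization $\partial_a\partial_{\bar b}+\partial_{\bar b}\partial_a$ and the metric pairing in $\Delta F$.
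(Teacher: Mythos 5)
Your proposal is correct in outline but takes a genuinely different route from the paper. Where you invoke the extension-independence lemma, pick an explicit $(w_1,w_2)$-homogeneous extension built from the ratios $y^a/y^0$, $y^{\bar a}/y^{\bar 0}$, $\tfrac12(y^\infty/y^0 - y^{\bar\infty}/y^{\bar 0})$, and then grind through the chain rule in ambient Cartesian coordinates, the paper never chooses an extension at all: it inserts the identity $\delta^A_B = X^A Z_B + Z^A X_B + Y^A_c Y^c_B$ from (\ref{id}) (restricted to $\phi(M)$) into $\tilde\Delta = \delta^A_B\,\partial_A\partial^B$, pulls out the Euler operators $\mathbb{E}$ and $\bar{\mathbb{E}}$ from the $X\otimes Z + Z\otimes X$ piece (which yields the weight contribution directly from homogeneity), and identifies the $Y\otimes Y$ piece with the tangential CR operators through the relations in (\ref{der}), using a symmetrization-by-averaging trick to kill the commutator of $\partial_a\partial_{\bar b}$. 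The paper's method is extension-free and packages the weight dependence intrinsically; your method is more elementary and avoids the $X,Y,Z$ machinery, at the cost of a long bookkeeping calculation whose four families of terms (mixed $z^a\partial_\sigma\partial_{z^a}|_M F$ and conjugate, a $(z^az_a)\partial_\sigma^2 F$ term from the off-diagonal blocks, the tangential $g^{a\bar b}\partial_{z^a}|_M\partial_{\bar z^b}|_M F$ term, and the lone $\partial_\sigma F$ term) must cancel or combine correctly. One arithmetic nit: the off-diagonal ambient blocks actually deliver the coefficient $\tfrac{w_2-w_1}{2}F_\tau$ (the factors $\partial_\infty(y^\infty/2y^0)$ and $\partial_{\bar\infty}(y^{\bar\infty}/2y^{\bar 0})$ each contribute a $\tfrac12$), not $(w_2-w_1)F_\tau$; with $F_\tau=-i\partial_\sigma F$ this gives exactly $\tfrac{i(w_1-w_2)}{2}\partial_\sigma F$, so the claimed agreement is right but the intermediate coefficient you quote is off by a factor of $2$. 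With that corrected, your plan goes through.
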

\begin{proof}
Using the equation (\ref{id}) we obtain
\begin{gather*}
(\partial_{A}\partial^{A}f)\circ\phi=\\
=[(X^{A}Z_{B}+Z^{A}X_{B})\partial_{A}\partial^{B}f+Y^{A}_{q}Y_{B}^{q})\partial_{A}\partial^{B}f]\circ\phi
\end{gather*}
Now the first term gives
\begin{gather*}
(X^{A}Z_{B}+Z^{A}X_{B})\partial_{A}\partial^{B}=\\
-Z^{A}(\partial_{A}X_{B})\partial^{B}+Z^{A}\partial_{A}X_{B}\partial^{B}-Z_{B}(\partial^{B}X^{A})\partial_{A}+Z_{B}\partial^{B}X^{A}\partial_{A}=\\
Z^{A}\partial_{A}X_{B}\partial^{B}+Z_{B}\partial^{B}X^{A}\partial_{A}=\\
Z^{A}\partial_{A}\bar{\mathbb{E}}+Z_{B}\partial^{B}\mathbb{E}
\end{gather*}
applied to $f$ and evaluated on the image of $\phi$. Let us recall that $Z^{A}\partial_{A}=\partial_{z^{\infty}}$ and $Z_{B}\partial^{B}=\partial_{z^{\bar{\infty}}}$. The second term is
\begin{gather*}
\left(g^{q\bar{r}}Y^{A}_{q}Y_{B\bar{r}}\partial_{A}\partial^{B}f\right)\circ\phi=\\
=\left(g^{q\bar{r}}[Y^{A}_{q}\partial_{A}Y_{B\bar{r}}\partial^{B}-Y^{A}_{q}(\partial_{A}Y_{B\bar{r}})\partial^{B}]f\right)\circ\phi=\\
=\left(g^{q\bar{r}}Y^{A}_{q}\partial_{A}Y_{B\bar{r}}\partial^{B}f+\frac{n}{2}Z_{B}\partial^{B}f\right)\circ\phi=\\
=\left(g^{q\bar{r}}\partial_{q}\partial_{\bar{r}}f+\frac{n}{2}\partial_{z^{\bar{\infty}}}f\right)\circ\phi
\end{gather*}
Another way to compute the second term is
\begin{gather*}
\left(g^{q\bar{r}}Y_{B\bar{r}}Y^{A}_{q}\partial^{B}\partial_{A}f\right)\circ\phi=\\
=\left(g^{q\bar{r}}[Y_{B\bar{r}}\partial^{B}Y^{A}_{q}\partial_{A}-Y_{B\bar{r}}(\partial^{B}Y^{A}_{q})\partial_{A}]f\right)\circ\phi=\\
=\left(g^{q\bar{r}}Y_{B\bar{r}}\partial^{B}Y^{A}_{q}\partial_{A}f+\frac{n}{2}Z^{A}\partial_{A}f\right)\circ\phi=\\
=\left(g^{q\bar{r}}\partial_{\bar{r}}\partial_{q}f+\frac{n}{2}\partial_{z^{\infty}}f\right)\circ\phi
\end{gather*}
We will take as the second term one half of their sum
\begin{displaymath}
\left([\frac{1}{2}g^{q\bar{r}}(\partial_{\bar{r}}\partial_{q}+\partial_{q}\partial_{\bar{r}})+\frac{n}{2}(\partial_{z^{\bar{\infty}}}+\partial_{z^{\infty}})]f\right)\circ\phi
\end{displaymath}
Altogether we get
\begin{gather*}
(\tilde{\Delta}f)\circ\phi=\left(w_{1}\partial_{z^{\bar{\infty}}}f+w_{2}\partial_{z^{\infty}}f\right)\circ\phi+\\
+\left([\frac{g^{q\bar{r}}}{2}(\partial_{\bar{r}}\partial_{q}+\partial_{q}\partial_{\bar{r}})+\frac{n}{2}(\partial_{z^{\bar{\infty}}}+\partial_{z^{\infty}})]f\right)\circ\phi=\\
=\left([\frac{g^{q\bar{r}}}{2}(\partial_{q}\partial_{\bar{r}}+\partial_{\bar{r}}\partial_{q})+\frac{n+w_{1}+w_{2}}{2}\partial_{\rho}+i\frac{w_{1}-w_{2}}{2}\partial_{\sigma}]f\right)\circ\phi=\\
=\frac{1}{2}(\partial^{a}\partial_{a}+\partial_{a}\partial^{a})F+\frac{i(w_{1}-w_{2})}{2}\partial_{\sigma}F
\end{gather*}
which completes the proof. We have only used the fact that $z^{\infty}=\rho+i\sigma$.
\end{proof}

\subsection{Ambient construction of symmetries - existence}

In previous subsection we have used simpler ambient operator to induce the sub-Laplacian on $M$. We want to use the same procedure for symmetries of the sub-Laplacian.\par
Before constructing the symmetries, we first replace 
\begin{displaymath}
V^{a_{1}\dots a_{k}\bar{b}_{1}\dots\bar{b}_{l}\sigma\dots\sigma}\partial_{a_{1}}\dots\partial_{a_{k}}\partial_{\bar{b}_{1}}\dots\partial_{\bar{b}_{l}}\partial_{\sigma}\dots\partial_{\sigma}
\end{displaymath} 
by 
\begin{displaymath}
V^{a_{1}\dots a_{k}\sigma\dots\sigma}_{b_{1}\dots b_{l}}\partial_{a_{1}}\dots\partial_{a_{k}}\partial^{b_{1}}\dots\partial^{b_{l}}\partial_{\sigma}\dots\partial_{\sigma}
\end{displaymath}
For these functions we rewrite the equations (\ref{abara}), (\ref{a,bara}), (\ref{asigma}), (\ref{barasigma}) and (\ref{abarasigma}) as
\begin{gather}\label{abara*}
\partial^{(a_{1}}V^{a_{2}\dots a_{k})}_{b_{1}\dots b_{l}}+\partial_{(b_{1}}V^{a_{1}\dots a_{k}}_{b_{2}\dots b_{l})}=\delta^{(a_{1}}_{(b_{1}}\lambda^{a_{2}\dots a_{k})}_{b_{2}\dots b_{l})}
\end{gather}
for some tensor $\lambda$ and $k,l\geq1$, $k+l=d+1$,
\begin{gather}\label{a,bara*}
\partial^{(a_{1}}V^{a_{2}\dots a_{d+1})}=0\qquad\partial_{(b_{1}}V_{b_{2}\dots b_{d+1})}=0
\end{gather}
\begin{gather}\label{asigma*}
ikV^{a_{1}\dots a_{k}\sigma\dots\sigma}+\partial^{(a_{1}}V^{a_{2}\dots a_{k})\sigma\dots\sigma}=0
\end{gather}
for $1\leq k\leq d$,
\begin{gather}\label{barasigma*}
-ilV^{\sigma\dots\sigma}_{b_{1}\dots b_{l}}+\partial_{(b_{1}}V^{\sigma\dots\sigma}_{b_{2}\dots b_{l})}=0
\end{gather}
for $1\leq l\leq d$,
\begin{gather}\label{abarasigma*}
i(k-l)V^{a_{1}\dots a_{k}\sigma\dots\sigma}_{b_{1}\dots b_{l}}+\partial^{(a_{1}}V^{a_{2}\dots a_{k})\sigma\dots\sigma}_{b_{1}\dots b_{l}}+\partial_{(b_{1}}V^{a_{1}\dots a_{k}\sigma\dots\sigma}_{b_{2}\dots b_{l})}=\nonumber\\
=\delta^{(a_{1}}_{(b_{1}}\lambda^{a_{2}\dots a_{k})\sigma\dots\sigma}_{b_{2}\dots b_{l}}
\end{gather}
for $1\leq k$, $1\leq l$, $k+l\leq d$ and some tensor $\lambda$. The corresponding first BGG equations are
\begin{gather}\label{kerbgg*}
\mbox{the trace-free part of}\quad\partial^{(a_{1}}\dots\partial^{a_{d+1-2s}}V^{a_{d+2-2s}\dots a_{d+1-s})\sigma\dots\sigma}_{b_{1}\dots b_{s}}=0\\
\mbox{the trace-free part of}\quad\partial_{(b_{1}}\dots\partial_{b_{d+1-2s}}V^{a_{1}\dots a_{s}\sigma\dots\sigma}_{b_{d+2-2s}\dots b_{d+1-s}}=0\nonumber
\end{gather}
\begin{lemma}
The first order operators $x^{A}\partial_{B}-x_{B}\partial^{A}$ commute with $\tilde{\Delta}$ and with $r$.
\end{lemma}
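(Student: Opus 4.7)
The plan is to prove both commutation relations by direct computation, exploiting two facts: holomorphic partials $\partial_B$ commute with the index-raised (antiholomorphic) partials $\partial^C = g^{C\bar D}\partial_{\bar D}$, and the elementary commutators $[\partial_B, x^C] = \delta^C_B$, $[\partial^A, x_C] = \delta^A_C$ (with the mixed brackets $[\partial_B, x_C]$ and $[\partial^A, x^C]$ vanishing because holomorphic functions are killed by antiholomorphic derivatives and vice versa). Once these basic commutators are in place, everything else is Leibniz.

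For the commutator with $\tilde\Delta = \partial^C\partial_C$, I would first note that $[x^A\partial_B, \partial^C\partial_C] = [x^A, \partial^C\partial_C]\partial_B$ since $\partial_B$ commutes with both factors of $\tilde\Delta$. Using Leibniz and the basic commutators, $[x^A, \partial^C\partial_C] = \partial^C[x^A, \partial_C] = -\partial^A$, giving $[x^A\partial_B, \tilde\Delta] = -\partial^A\partial_B$. Symmetrically, $[x_B\partial^A, \tilde\Delta] = -\partial_B\partial^A$. Subtracting and using $[\partial^A, \partial_B] = 0$ yields $[L^A_B, \tilde\Delta] = -\partial^A\partial_B + \partial_B\partial^A = 0$.

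For the commutator with $r = x^C x_C$, a dual computation applies. Here $x^A$ commutes with $r$ so $[x^A\partial_B, r] = x^A[\partial_B, r]$, and $[\partial_B, x^C x_C] = [\partial_B, x^C]x_C = x_B$. Thus $[x^A\partial_B, r] = x^A x_B$. Analogously, $[x_B\partial^A, r] = x_B[\partial^A, r] = x_B x^A$, and subtracting gives $[L^A_B, r] = 0$ by commutativity of the coordinate functions.

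There is really no obstacle beyond careful bookkeeping of the Hermitian-metric conventions (which bar indices have been suppressed by the identification $\bar V \simeq V^*$). Conceptually, the statement just records that $L^A_B = x^A\partial_B - x_B\partial^A$ generates the Lie algebra preserving the Hermitian form $g_{A\bar B}$, so it automatically preserves both the quadratic form $r$ and its associated Laplacian $\tilde\Delta$; the explicit calculation is the verification of this structural fact.
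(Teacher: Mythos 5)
Your proof is correct and follows essentially the same route as the paper: a direct Leibniz computation using the basic commutators $[\partial_B,x^C]=\delta^C_B$, $[\partial^A,x_C]=\delta^A_C$, with the mixed brackets and $[\partial_B,\partial^C]$ vanishing. The paper writes the same calculation by expanding $\partial_C\partial^C(x^A\partial_B-x_B\partial^A)$ directly rather than phrasing it through commutator identities, but the content is identical.
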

\begin{proof}
\begin{gather*}
\partial_{C}\partial^{C}(x^{A}\partial_{B}-x_{B}\partial^{A})=\\
=\partial_{C}(x^{A}\partial_{B}\partial^{C}-\delta^{C}_{B}\partial^{A}-x_{B}\partial^{A}\partial^{C})=\\
=\delta^{A}_{C}\partial_{B}\partial^{C}+x^{A}\partial_{B}\partial_{C}\partial^{C}-\partial_{B}\partial^{A}-x_{B}\partial^{A}\partial_{C}\partial^{C}=\\
=(x^{A}\partial_{B}-x_{B}\partial^{A})\partial_{C}\partial^{C}
\end{gather*}
Similarly, using that $r=x^{A}x_{A}$,
\begin{gather*}
(x^{A}\partial_{B}-x_{B}\partial^{A})x^{C}x_{C}=\\
x^{A}x_{B}+x^{A}x^{C}x_{C}\partial_{B}-x_{B}x^{A}-x_{B}x^{C}x_{C}\partial^{A}
\end{gather*}
\end{proof}
Now we know that any  complex linear combination of such operators, i.e. any operator of the form $V^{B}_{A}(x^{A}\partial_{B}-x_{B}\partial^{A})$ commutes with $\tilde{\Delta}$ and $r$, and hence induces a symmetry of the sub-Laplacian on $M$. The vector space of first order operators we have found so far, is clearly isomorphic to $\mathfrak{gl}(n+2,\mathbb{C})$ (the matrices $V^{B}_{A}$ are scalar). The operator corresponding to the central element is 
\begin{displaymath}
i(x^{B}\partial_{B}-x_{B}\partial^{B})
\end{displaymath}
which induces on $M$ scalar multiplication by $i(w_{1}-w_{2})$ on functions with weight $(w_{1},w_{2})$. Since scalar multiplication is not very interesting operator, we will restrict ourselves to operators corresponding to $\mathfrak{sl}(n+2,\mathbb{C})$.\par
Composing such first order operators, we get higher order operators with the same properties. Concretely, we may write them like this:
\begin{equation}\label{expr}
V^{B_{1}\dots B_{d}}_{A_{1}\dots A_{d}}(x^{A_{1}}\partial_{B_{1}}-x_{B_{1}}\partial^{A_{1}})\dots(x^{A_{d}}\partial_{B_{d}}-x_{B_{d}}\partial^{A_{d}})
\end{equation}
The expression (\ref{expr}) will be simplified and from the simplification we get symmetries of the tensor $V^{B_{1}\dots B_{d}}_{A_{1}\dots A_{d}}$.\par
First, we compute the commutator of two first order operators:
\begin{eqnarray}
&V^{B_{1}}_{A_{1}}W^{B_{2}}_{A_{2}}(x^{A_{1}}\partial_{B_{1}}-x_{B_{1}}\partial^{A_{1}})(x^{A_{2}}\partial_{B_{2}}-x_{B_{2}}\partial^{A_{2}})-&\nonumber\\
&-W^{B_{2}}_{A_{2}}V^{B_{1}}_{A_{1}}(x^{A_{2}}\partial_{B_{2}}-x_{B_{2}}\partial^{A_{2}})(x^{A_{1}}\partial_{B_{1}}-x_{B_{1}}\partial^{A_{1}})=&\nonumber\\
&=V^{B_{1}}_{A_{1}}W^{B_{2}}_{A_{2}}(x^{A_{1}}\delta^{A_{2}}_{B_{1}}\partial_{B_{2}}+x^{A_{1}}x^{A_{2}}\partial_{B_{1}}\partial_{B_{2}}-x^{A_{1}}x_{B_{2}}\partial_{B_{1}}\partial^{A_{2}})-&\nonumber\\
&-V^{B_{1}}_{A_{1}}W^{B_{2}}_{A_{2}}(x_{B_{1}}x^{A_{2}}\partial^{A_{1}}\partial_{B_{2}}-x_{B_{1}}\delta^{A_{1}}_{B_{2}}\partial^{A_{2}}-x_{B_{1}}x_{B_{2}}\partial^{A_{1}}\partial^{A_{2}})-&\nonumber\\
&-W^{B_{2}}_{A_{2}}V^{B_{1}}_{A_{1}}(x^{A_{2}}\delta^{A_{1}}_{B_{2}}\partial_{B_{1}}+x^{A_{2}}x^{A_{1}}\partial_{B_{2}}\partial_{B_{1}}-x^{A_{2}}x_{B_{1}}\partial_{B_{2}}\partial^{A_{1}})+&\nonumber\\
&+W^{B_{2}}_{A_{2}}V^{B_{1}}_{A_{1}}(x_{B_{2}}x^{A_{1}}\partial^{A_{2}}\partial_{B_{1}}-x_{B_{2}}\delta^{A_{2}}_{B_{1}}\partial^{A_{1}}-x_{B_{2}}x_{B_{1}}\partial^{A_{2}}\partial^{A_{1}})=&\nonumber\\
&=V^{C}_{A_{1}}W^{B_{2}}_{C}x^{A_{1}}\partial_{B_{2}}+V^{B_{1}}_{C}W^{C}_{A_{2}}x_{B_{1}}\partial^{A_{2}}
-W^{C}_{A_{2}}V^{B_{1}}_{C}x^{A_{2}}\partial_{B_{1}}-W^{B_{2}}_{C}V^{C}_{A_{1}}x_{B_{2}}\partial^{A_{1}}=&\nonumber\\
&=(V^{C}_{A}W^{B}_{C}-V^{B}_{C}W^{C}_{A})(x^{A}\partial_{B}-x_{B}\partial^{A})&
\end{eqnarray}
So we see that taking commutator does not enlarge the vector space of symmetries. Therefore we can restrict ourselves to such tensors $V^{B_{1}\dots B_{d}}_{A_{1}\dots A_{d}}$, which are symmetric in columns $B_{i}A_{i}$. We will want the induced operator to be of order $d$.\par
Let $I$, $J$ be ordered subsets of $\{1,\dots,d\}$ such that $I\cup J=\{1,\dots,d\}$ and $I\cap J=\emptyset$. Composing $d$ first order symmetries, we get
\begin{gather}\label{hos}
V^{B_{1}\dots B_{d}}_{A_{1}\dots A_{d}}\prod_{i=1}^{d}(X^{A_{i}}\partial_{B_{i}}-X_{B_{i}}\partial^{A_{i}})=\\
=\sum_{\latop{|I|=k,|J|=l}{k+l=d}}(-1)^{l}V^{B_{1}\dots B_{d}}_{A_{1}\dots A_{d}}X^{A_{i_{1}}}X^{A_{i_{k}}}X_{B_{j_{1}}}\dots X_{B_{j_{l}}}\partial_{B_{i_{1}}}\dots\partial_{B_{i_{k}}}\partial^{A_{j_{1}}}\dots\partial^{A_{j_{l}}}+LOTS\nonumber
\end{gather}
where $V^{B_{1}\dots B_{d}}_{A_{1}\dots A_{d}}$ is symmetric in columns $B_{i}A_{i}$. Since we want the induced operator to be of order $d$ (as we shall see, these operators will suffice), we may consider the tensor $V^{B_{1}\dots B_{d}}_{A_{1}\dots A_{d}}$ be totally trace-free (it is already trace-free within any column). Looking at the induced operator, we get
\begin{gather*}
\sum_{\latop{|I|=k,|J|=l}{k+l=d}}(-1)^{l}V^{B_{1}\dots B_{d}}_{A_{1}\dots A_{d}}X^{A_{i_{1}}}\dots X^{A_{i_{k}}}X_{B_{j_{1}}}\dots X_{B_{j_{l}}}\partial_{B_{i_{1}}}\dots\partial_{B_{i_{k}}}\partial^{A_{j_{1}}}\dots\partial^{A_{j_{l}}}=\\
=\sum_{\latop{|I|=k,|J|=l}{k+l=d}}(-1)^{l}V^{B_{1}\dots B_{d}}_{A_{1}\dots A_{d}}X^{A_{i_{1}}}\dots X^{A_{i_{k}}}X_{B_{j_{1}}}\dots X_{B_{j_{l}}}\delta^{D_{i_{1}}}_{B_{i_{1}}}\partial_{D_{i_{1}}}\dots\delta^{D_{i_{k}}}_{B_{i_{k}}}\partial_{D_{i_{k}}}\cdot\\
\cdot\delta^{A_{j_{1}}}_{C_{j_{1}}}\partial^{C_{j_{1}}}\dots\delta^{A_{j_{l}}}_{C_{j_{l}}}\partial^{C_{j_{l}}}=\\
\sum_{\latop{|I|=k,|J|=l}{k+l=d}}(-1)^{l}V^{B_{1}\dots B_{d}}_{A_{1}\dots A_{d}}X^{A_{i_{1}}}X^{A_{i_{k}}}X_{B_{j_{1}}}\dots X_{B_{j_{l}}}\cdot\\
\cdot(X^{D_{i_{1}}}Z_{B_{i_{1}}}+Z^{D_{i_{1}}}X_{B_{i_{1}}}+Y^{D_{i_{1}}}_{a_{1}}Y^{a_{1}}_{B_{i_{1}}})\partial_{D_{i_{1}}}\dots(X^{D_{i_{k}}}Z_{B_{i_{k}}}+Z^{D_{i_{k}}}X_{B_{i_{k}}}+Y^{D_{i_{k}}}_{a_{k}}Y^{a_{k}}_{B_{i_{k}}})\partial_{D_{i_{k}}}\cdot\\
\cdot(X^{A_{j_{1}}}Z_{C_{j_{1}}}+Z^{A_{j_{1}}}X_{C_{j_{1}}}+Y_{C_{j_{1}}}^{b_{1}}Y^{A_{j_{1}}}_{b_{1}})\partial^{C_{j_{1}}}\dots(X^{A_{j_{l}}}Z_{C_{j_{l}}}+Z^{A_{j_{l}}}X_{C_{j_{l}}}+Y_{C_{j_{l}}}^{b_{l}}Y^{A_{j_{l}}}_{b_{l}})\partial^{C_{j_{l}}}
\end{gather*}
Knowing that $Y^{D}_{a}\partial_{D}=\partial_{a}$, $Y^{b}_{C}\partial^{C}=\partial^{b}$, $X^{D}\partial_{D}=\mathbb{E}$, $X_{C}\partial^{C}=\bar{\mathbb{E}}$ and $Z^{D}\partial_{D}-Z_{C}\partial^{C}=-i\partial_{\sigma}$, we see we have to put
\begin{gather}\label{symbd}
V^{a_{1}\dots a_{d}}:=\sum_{\sigma\in\mathfrak{S}_{d}}\frac{1}{d!}V^{B_{1}\dots B_{d}}_{A_{1}\dots A_{d}}X^{A_{1}}\dots X^{A_{d}}Y^{a_{\sigma(1)}}_{B_{1}}\dots Y^{a_{\sigma(d)}}_{B_{d}}\circ\phi\\
V_{b_{1}\dots b_{d}}:=\sum_{\tau\in\mathfrak{S}_{d}}\frac{1}{d!}V^{B_{1}\dots B_{d}}_{A_{1}\dots A_{d}}X_{B_{1}}\dots X_{B_{d}}Y^{A_{1}}_{b_{\tau(1)}}\dots Y^{A_{d}}_{b_{\tau(d)}}\circ\phi\nonumber\\
V^{a_{1}\dots a_{k}\sigma\dots\sigma}:=\nonumber\\
=(-i)^{d-k}\sum_{|I|=k}\sum_{\sigma\in\mathfrak{S}_{k}}\frac{1}{k!}V^{B_{1}\dots B_{d}}_{A_{1}\dots A_{d}}X^{A_{1}}\dots X^{A_{d}}Y^{a_{\sigma(1)}}_{B_{i_{1}}}\dots Y^{a_{\sigma(k)}}_{B_{i_{k}}}X_{B}\dots X_{B}\circ\phi\nonumber\\
V_{b_{1}\dots b_{l}\sigma\dots\sigma}:=\nonumber\\
=(-1)^{l}(-i)^{d-l}\sum_{|J|=l}\sum_{\tau\in\mathfrak{S}_{l}}\frac{1}{l!}V^{B_{1}\dots B_{d}}_{A_{1}\dots A_{d}}X_{B_{1}}\dots X_{B_{d}}Y^{A_{j_{1}}}_{b_{\tau(1)}}\dots Y^{A_{j_{l}}}_{b_{\tau(l)}}X^{A}\dots X^{A}\circ\phi\nonumber\\
V^{\sigma\dots\sigma}:=(-i)^{d}V^{B_{1}\dots B_{d}}_{A_{1}\dots A_{d}}X^{A_{1}}\dots X^{A_{d}}X_{B_{1}}\dots X_{B_{d}}\circ\phi\nonumber\\
V^{a_{1}\dots a_{k}\sigma\dots\sigma}_{b_{1}\dots b_{l}}:=\nonumber\\
=(-1)^{l}(-i)^{d-k-l}\sum_{\latop{|I|=k,|J|=l}{I\cap J=\emptyset}}\sum_{\latop{\sigma\in\mathfrak{S}_{k}}{\tau\in\mathfrak{S}_{l}}}\frac{1}{k!l!}V^{B_{1}\dots B_{d}}_{A_{1}\dots A_{d}}X^{A_{i_{1}}}\dots X^{A_{i_{k}}}X_{B_{j_{1}}}\dots X_{B_{j_{l}}}\cdot\nonumber\\
\cdot Y^{a_{\sigma(1)}}_{B_{i_{1}}}\dots Y^{a_{\sigma(k)}}_{B_{i_{k}}}Y^{A_{j_{1}}}_{b_{\tau(1)}}\dots Y^{A_{j_{l}}}_{b_{\tau(l)}}X^{A}\dots X^{A}X_{B}\dots X_{B}\circ\phi\nonumber
\end{gather}
where $I$ and $J$ are ordered subsets of $\{1,\dots,d\}$ and all indices not specified are simply the remaining indices. We will always assume this in the sequel. These functions satisfy the conditions (\ref{abara*}), (\ref{a,bara*}), (\ref{asigma*}), (\ref{barasigma*}) and (\ref{abarasigma*}).
\begin{proposition}\label{prop1}
For every given $d$ and $s$, $0\leq 2s\leq d$ and any (weighted) tensor $T^{a_{1}\dots a_{s}}_{b_{1}\dots b_{s}}$ satisfying the first BGG equation (\ref{kerbgg*}) there are canonically defined differential operators $P_{T}$ and $\delta_{T}$ of degree $(d,s)$ with the leading part (in the sense of degree) of the symbol being $V^{a_{1}\dots a_{s}\sigma\dots\sigma}_{b_{1}\dots b_{s}}=T^{a_{1}\dots a_{s}}_{b_{1}\dots b_{s}}$ such that $\Delta P_{T}=\delta_{T}\Delta$.
\end{proposition}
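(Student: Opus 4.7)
The plan is to realize $P_{T}$ as an ambient differential operator built as a polynomial in the first-order symmetries $x^{A}\partial_{B}-x_{B}\partial^{A}$, and then to descend it to $M$ via the null-cone construction of Section 4.2. The starting point is to package the BGG solution $T^{a_{1}\ldots a_{s}}_{b_{1}\ldots b_{s}}$ as a constant ambient tensor $V^{B_{1}\ldots B_{d}}_{A_{1}\ldots A_{d}}$ sitting inside the irreducible $SL(n+2,\mathbb{C})$-representation $\oooo{d-2s}{s}{s}{d-2s}$. By the Remark after Theorem~\ref{thmsymb}, this irreducible has Young-tableau symmetries with $s$ height-two columns and $d-2s$ height-one columns, so a choice of $V$ amounts to a totally trace-free tensor, symmetric in each column pair $B_{i}A_{i}$ and Young-symmetrized between columns. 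That this finite-dimensional representation naturally parametrizes solutions of the first BGG operator (\ref{kerbgg*}) in the flat model is a standard fact from parabolic geometry which I would invoke without re-proving.

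With $V^{B_{1}\ldots B_{d}}_{A_{1}\ldots A_{d}}$ chosen so as to encode $T$, set
\[
P_{T} := V^{B_{1}\ldots B_{d}}_{A_{1}\ldots A_{d}}\prod_{i=1}^{d}\bigl(x^{A_{i}}\partial_{B_{i}}-x_{B_{i}}\partial^{A_{i}}\bigr).
\]
The preceding lemma tells us each factor commutes with $\tilde{\Delta}$, so $\tilde{\Delta}\,P_{T}=P_{T}\,\tilde{\Delta}$ holds on the ambient space. Since $V$ is constant and each $x^{A}\partial_{B}-x_{B}\partial^{A}$ preserves ambient homogeneity, $P_{T}$ sends $(w_{1},w_{2})$-homogeneous functions to $(w_{1},w_{2})$-homogeneous functions, and together with the identity $\tilde{\Delta}(rh)=r\tilde{\Delta}h+(n+w_{1}+w_{2})h$ this implies that $(P_{T}f)\circ\phi$ depends only on the restriction $F$ of $f$. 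Hence $P_{T}$ descends to a well-defined differential operator on sections of the appropriate density bundle. The ambient commutation relation then descends to produce $\Delta P_{T}F=\delta_{T}\Delta F$, with $\delta_{T}$ given by the same ambient expression acting on the shifted weight.

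To verify the symbol, I would expand the ambient product using the identity (\ref{id}) to rewrite each $\delta^{A}_{B}$ that appears after the derivatives hit the $x$'s, and then reorder as in the derivation leading to (\ref{symbd}). The Young-tableau symmetries of $V$ force contractions with too many repeated $X^{A}X_{B}$ pairs to vanish, so the component of the symbol at filtration position $(s,s)$ is the unique nonvanishing one of its leading order; the explicit formula in (\ref{symbd}) then identifies this component, up to an overall combinatorial factor, with $T^{a_{1}\ldots a_{s}}_{b_{1}\ldots b_{s}}$. Theorem~\ref{thmsymb} guarantees that the remaining symbol components, obtained from this leading one via (\ref{abarasigma*}), are produced automatically by the ambient construction.

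The hardest step is the combinatorial bookkeeping in this symbol verification: one must check that the Young-symmetrized ambient tensor in $\oooo{d-2s}{s}{s}{d-2s}$ indeed gives back, under the contractions of (\ref{symbd}), a nonzero scalar multiple of $T$ at the $(s,s)$ position, and that no lower $s'<s$ contribution is present. Once this is pinned down, canonicity of the assignment $T\mapsto(P_{T},\delta_{T})$ is manifest from the ambient construction.
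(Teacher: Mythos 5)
Your proposal follows the paper in realizing $P_{T}$ as a polynomial in the ambient first-order symmetries $x^{A}\partial_{B}-x_{B}\partial^{A}$ and descending via the null-cone, so the outer frame is the same. Where you depart from the paper is in \emph{how} you arrange for the lower-filtration pieces of the symbol to vanish: you propose to choose $V^{B_{1}\dots B_{d}}_{A_{1}\dots A_{d}}$ inside the Young-symmetric irreducible $\oooo{d-2s}{s}{s}{d-2s}\subset S^{d}_{0}\mathfrak{sl}(n+2,\mathbb{C})$ and claim the Young symmetry then forces the $s'<s$ contractions to vanish. The paper does something quite different: it takes an unsymmetrized ansatz with several component ``types'' and free parameters $x_{1},\dots,x_{s}$, writes down a linear system whose matrix entries are the binomial quantities $a^{s}_{k+1,i}$, and proves by induction on $s$ (via the identity $a^{s+1}_{k+2,i}-a^{s+1}_{k+2,i+1}=a^{s}_{k+1,i}$) that the determinant is nonzero, so the lower-order symbol terms can be cancelled without disturbing the leading one. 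That determinant computation is the entire substance of the paper's proof, and it has no counterpart in your argument.

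The gap in your proposal is real and is located exactly where you flag ``the hardest step.'' You never verify (a) that the leading $(d,s)$-contraction of your Young-symmetric $V$ is a \emph{nonzero} multiple of $T$, nor (b) that the $s'<s$ contractions vanish. For (b) there is a clean argument you gesture at but do not make: the contraction map from $S^{d}_{0}\mathfrak{sl}(n+2,\mathbb{C})$ to the finite-dimensional space of BGG solutions at degree $(d,s')$ is $SL(n+2,\mathbb{C})$-equivariant, the decomposition of $S^{d}_{0}\mathfrak{sl}$ is multiplicity-free, and the BGG solution space is irreducible of type $\oooo{d-2s'}{s'}{s'}{d-2s'}$, so by Schur the restriction of the map to any other component vanishes. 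The phrase ``Young-tableau symmetries force contractions with too many $X^{A}X_{B}$ pairs to vanish'' is a heuristic, not this argument. But even patched this way, (a) remains: Schur's lemma only says the map at $(d,s)$ is zero or an isomorphism, and ruling out zero requires either a direct highest-weight computation or something equivalent to the paper's determinant. Finally, note that invoking multiplicity-freeness of $S^{d}_{0}\mathfrak{sl}$ and the placement of $\oooo{d-2s}{s}{s}{d-2s}$ inside it uses Theorem 5.1 (the section-5 decomposition), which in the paper's logical order is established \emph{after} this Proposition; your route would therefore also require reorganizing the dependencies.
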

\begin{proof}
We know from above that every symmetry of order $d$ can be written as a sum of symmetries with $V^{a_{1}\dots a_{s}\sigma\dots\sigma}_{b_{1}\dots b_{s}}\neq0$ and $V^{a_{1}\dots a_{k}\sigma\dots\sigma}_{b_{1}\dots b_{k}}=0$ for all $k<s$ and $0\leq2s\leq d$. So it suffices to construct for each $0\leq2s\leq d$ and each function $V^{a_{1}\dots a_{s}\sigma\dots\sigma}_{b_{1}\dots b_{s}}$ satisfying the first BGG equation corresponding to \quad$\oooo{d-2s}{s}{s}{d-2s}$\quad a tensor $V^{B_{1}\dots B_{d}}_{A_{1}\dots A_{d}}$ inducing a symmetry with given (up to possible nonzero constant multiple) $V^{a_{1}\dots a_{s}\sigma\dots\sigma}_{b_{1}\dots b_{s}}$ and such that for all $k<s$ $V^{a_{1}\dots a_{k}\sigma\dots\sigma}_{b_{1}\dots b_{k}}=0$. Since contracting with $X$-s and $Y$-s is equivariant map and for each $s$ the set of possible $V^{a_{1}\dots a_{s}\sigma\dots\sigma}_{b_{1}\dots b_{s}}$-s (with $V^{a_{1}\dots a_{k}\sigma\dots\sigma}_{b_{1}\dots b_{k}}=0$ for all $k<s$) forms a complex irreducible representation of $SU(p+1,q+1)$ (and hence of $SL(n+1,\mathbb{C})$), it suffices to construct the tensor $V^{B_{1}\dots B_{d}}_{A_{1}\dots A_{d}}$ for one such function for each $s$.\par
For $s=0$ we put $V^{\sigma\dots\sigma}=1$ and the only nontrivial component of $V^{B_{1}\dots B_{d}}_{A_{1}\dots A_{d}}$ will be $V^{\infty\dots\infty}_{0\dots0}=1$. This does not depend on $d$ (only the constant factor does).\par
For $s>0$, we have the mapping
\begin{gather*}
V^{B_{1}\dots B_{d}}_{A_{1}\dots A_{d}}\mapsto V^{B_{1}\dots B_{d}}_{A_{1}\dots A_{d}}(-1)^{s}(-i)^{d-2s}\sum_{\latop{|I|=s,|J|=s}{I\cap J=\emptyset}}\sum_{\latop{\sigma\in\mathfrak{S}_{s}}{\tau\in\mathfrak{S}_{s}}}\frac{1}{(s!)^{2}}X^{A_{i_{1}}}\dots X^{A_{i_{s}}}\cdot\\
\cdot X_{B_{j_{1}}}\dots X_{B_{j_{s}}}X^{A}\dots X^{A}Y^{a_{\sigma(1)}}_{B_{i_{1}}}\dots Y^{a_{\sigma(s)}}_{B_{i_{s}}}Y_{b_{\tau(1)}}^{A_{j_{1}}}\dots Y_{b_{\tau(s)}}^{A_{j_{s}}}X_{B}\dots X_{B}\circ\phi=\\
=V^{a_{1}\dots a_{s}\sigma\dots\sigma}_{b_{1}\dots b_{s}}
\end{gather*}
We fix some \emph{constant} tensor field $V^{a_{1}\dots a_{s}\sigma\dots\sigma}_{b_{1}\dots b_{s}}$ and we put 
\begin{gather*}
V^{a_{1}\infty\dots a_{s}\infty\infty\dots\infty}_{0b_{1}\dots0b_{s}0\dots0}:=V^{a_{1}\dots a_{s}\sigma\dots\sigma}_{b_{1}\dots b_{s}}
\end{gather*}
This will be for now the only nonzero component up to symmetry in $B_{i}A_{i}$-s. This tensor surely satisfies the corresponding first BGG equation. It is easy to see that the symmetry induced by using this tensor has nonzero $V^{a_{1}\dots a_{s}\sigma\dots\sigma}_{b_{1}\dots b_{s}}$ (it is in fact, up to nonzero constant, our chosen one). But the symbol parts $V^{a_{1}\dots a_{k}\sigma\dots\sigma}_{b_{1}\dots b_{k}}$ for all $k<s$ are also nonzero. In order to make them vanish, we define some other components to be possibly nonzero. These components will not influence the symbol part $V^{a_{1}\dots a_{s}\sigma\dots\sigma}_{b_{1}\dots b_{s}}$.\par
Let's fix some $s>0$. We define
\begin{gather*}
V^{a_{1}\infty\dots a_{s}\infty\infty\dots\infty}_{0b_{1}\dots0b_{s}0\dots0}:=V^{a_{1}\dots a_{s}\sigma\dots\sigma}_{b_{1}\dots b_{s}}\\
V^{a_{1}\infty\dots a_{s-1}\infty a_{s}\infty\dots\infty}_{0b_{1}\dots0b_{s-1}b_{s}0\dots0}:=x_{1}V^{a_{1}\dots a_{s}\sigma\dots\sigma}_{b_{1}\dots b_{s}}\\
V^{a_{1}\infty\dots a_{s-2}\infty a_{s-1}a_{s}\infty\dots\infty}_{0b_{1}\dots0b_{s-2}b_{s-1}b_{s}0\dots0}:=x_{2}V^{a_{1}\dots a_{s}\sigma\dots\sigma}_{b_{1}\dots b_{s}}\\
\cdots\cdots\cdots\cdots\cdots\cdots\cdots\cdots\cdots\cdots\\
V^{a_{1}\dots a_{s}\infty\dots\infty}_{b_{1}\dots b_{s}0\dots0}:=x_{s}V^{a_{1}\dots a_{s}\sigma\dots\sigma}_{b_{1}\dots b_{s}}
\end{gather*}
These will be the only nonzero components of $V^{B_{1}\dots B_{d}}_{A_{1}\dots A_{d}}$ up to symmetry in $B_{i}A_{i}$-s. We will call them 'types'-in each row is one particular representative of one type. For brevity, we will write '$V^{a_{1}\dots a_{s}\sigma\dots\sigma}_{b_{1}\dots b_{s}}=1$'. To make the symbol parts $V^{a_{1}\dots a_{k}\sigma\dots\sigma}_{b_{1}\dots b_{k}}$ for all $k<s$ vanish, they have to satisfy the following system of linear equations:
\begin{gather}
\binom{d}{2s}\binom{2s}{s}a^{s}_{k+1,0}+\binom{d}{2s-2}\binom{2s-2}{s-1}\binom{d-2s+2}{1}a^{s}_{k+1,1}x_{1}+\\
+\binom{d}{2s-4}\binom{2s-4}{s-2}\binom{d-2s+4}{2}a^{s}_{k+1,2}x_{2}+\cdots+\binom{d}{0}\binom{0}{0}\binom{d}{s}a^{s}_{k+1,s}=0\nonumber
\end{gather}
Here the terms $\binom{d}{2s-2i}\binom{2s-2i}{s-i}\binom{d-2s+2i}{i}$ express the number of components of one type (it is always nonzero) and the numbers $a^{s}_{k+1,i}$ express the contribution of one component of corresponding type to the symbol part $V^{a_{1}\dots a_{k}\bar{b}_{1}\dots\bar{b}_{k}\sigma\dots\sigma}$ modulo the greatest common divisor, which is some polynomial. It is easy to see that 
\begin{gather*}
a^{s}_{k+1,i}=\binom{s-i}{k}\binom{s}{k}+\binom{s-i}{k-1}\binom{i}{1}\binom{s-1}{k}+\\
+\binom{s-i}{k-2}\binom{i}{2}\binom{s-2}{k}+\cdots+\binom{s-i}{k-j}\binom{i}{j}\binom{s-j}{k}+\cdots
\end{gather*}
This expression is always finite. So we have a system of linear equations in $x_{i}$-s, and we only need to prove the existence of some solution. We don't need the explicit expression. To prove the existence, we prove that the matrix of this system has nonzero determinant. First, the determinant is linear in columns, so it is a polynomial in $d$ times the determinant of matrix with entries $a^{s}_{k+1,i}$. We will prove that this last determinant is nonzero by induction on $s$.\par
We claim that $a^{s+1}_{k+2,i}-a^{s+1}_{k+2,i+1}=a^{s}_{k+1,i}$, where we put $a^{s}_{0,i}=0$. This means that
\begin{gather*}
\sum_{j\geq0}\binom{s+1-i}{k+1-j}\binom{i}{j}\binom{s+1-j}{k+1}=\\
=\sum_{j\geq0}\binom{s-i}{k-j}\binom{i}{j}\binom{s-j}{k}+\sum_{j=0}^{k+1}\binom{s-i}{k+1-j}\binom{i+1}{j}\binom{s+1-j}{k+1}
\end{gather*}
But
\begin{gather*}
\binom{s+1-i}{k+1-j}\binom{i}{j}\binom{s+1-j}{k+1}=\\
=\left(\binom{s-i}{k-j}+\binom{s-i}{k+1-j}\right)\binom{i}{j}\binom{s+1-j}{k+1}=\\
=\binom{s-i}{k+1-j}\binom{i}{j}\binom{s+1-j}{k+1}+\binom{s-i}{k-j}\binom{i}{j}\binom{s-j}{k}+\\
+\binom{s-i}{k-j}\binom{i}{j}\binom{s-j}{k+1}
\end{gather*}
Summing over $j$ we get
\begin{gather*}
\sum_{j\geq0}\binom{s+1-i}{k+1-j}\binom{i}{j}\binom{s+1-j}{k+1}=\\
=\sum_{j\geq0}\binom{s-i}{k-j}\binom{i}{j}\binom{s-j}{k}+\sum_{j=0}^{k+1}\binom{s-i}{k+1-j}\binom{i}{j}\binom{s+1-j}{k+1}+\\
+\sum_{j\geq1}\binom{s-i}{k-j}\binom{i}{j-1}\binom{s+1-j}{k+1}=\\
\sum_{j\geq0}\binom{s-i}{k-j}\binom{i}{j}\binom{s-j}{k}+\sum_{j=0}^{k+1}\binom{s-i}{k+1-j}\binom{i+1}{j}\binom{s+1-j}{k+1}
\end{gather*}
what is exactly what we claimed. Now we note that $a^{s}_{1,i}=1$ for all $s$ and $i$. Using this, we get
\begin{gather*}
\left|\begin{matrix}
1&1&\cdots&1&1\\
a^{s}_{2,1}&a^{s}_{2,2}&\cdots&a^{s}_{2,s-1}&a^{s}_{2,s}\\
\vdots&\vdots&\ddots&\vdots&\vdots\\
a^{s}_{s,1}&a^{s,2}&\cdots&a^{s}_{s,s-1}&a^{s}_{s,s}
\end{matrix}\right|=\left|\begin{matrix}
0&0&\cdots&0&1\\
a^{s}_{2,1}-a^{s}_{2,2}&a^{s}_{2,2}-a^{s}_{2,3}&\cdots&a^{s}_{2,s-1}-a^{s}_{2,s}&a^{s}_{2,s}\\
\vdots&\vdots&\ddots&\vdots&\vdots\\
a^{s}_{s,1}-a^{s}_{s,2}&a^{s}_{s,2}-a^{s}_{s,3}&\cdots&a^{s}_{s,s-1}-a^{s}_{s,s}&a^{s}_{s,s}
\end{matrix}\right|=\\
=(-1)^{s}\left|\begin{matrix}
a^{s-1}_{1,1}&a^{s-1}_{1,2}&\cdots&a^{s-1}_{1,s-1}\\
\vdots&\vdots&\ddots&\vdots\\
a^{s-1}_{s-1,1}&a^{s-1}_{s-1,2}&\cdots&a^{s-1}_{s-1,s-1}
\end{matrix}\right|
\end{gather*}
So this determinant is nonzero for any $s$, if and only if it is nonzero for $s=1$. But in this case the determinant is 1, since it is the determinant of matrix with one entry, concretely $a^{1}_{1,1}=1$.\par
We have constructed (not explicitly) the ambient operator inducing $P_{T}$. The same ambient operator will induce $\delta_{T}$, since it commutes with $\tilde{\Delta}$ and $r$. The only difference is in the weight of functions they are acting on, and hence in some coefficients depending on weight (action of $\mathbb{E}$ and $\bar{\mathbb{E}}$).
\end{proof}
\begin{theorem}
The vector space of symmetries of $\Delta$ modulo the equivalence relation is as a module for $SL(n+2,\mathbb{C})$ isomorphic to
\begin{displaymath}
	\bigoplus_{0\leq 2s\leq d}\quad\oooo{d-2s}{s}{s}{d-2s}\quad
\end{displaymath}
\end{theorem}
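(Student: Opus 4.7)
The plan is to assemble the theorem directly from the two structural results already established. By Theorem~\ref{thmsymb}, every equivalence class of symmetries of order at most $d$ admits a canonical representative fully determined by the tuple of ``seed'' tensors $V^{a_1\dots a_s\bar b_1\dots\bar b_s\sigma\dots\sigma}$ for $0\leq 2s\leq d$, each of which satisfies the first BGG operator $B_{d,s}$ corresponding to $\oooo{d-2s}{s}{s}{d-2s}$. This yields a well-defined, injective linear map $\Psi$ from symmetries modulo equivalence into $\bigoplus_{0\leq 2s\leq d}\ker B_{d,s}$: the explicit formulas of part~(2) of Theorem~\ref{thmsymb} reconstruct every other component of the symbol from the seeds, so two normal-form representatives with identical seed tuples must coincide modulo a trivial symmetry.

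Next I would invoke Proposition~\ref{prop1} for surjectivity of $\Psi$: for any $s$ and any $T\in\ker B_{d,s}$ the ambient construction produces a symmetry $P_T$ whose seed at level $s$ equals $T$ while the lower-level seeds vanish, so any prescribed element of the codomain is realized as the image of $\sum_s [P_{T_s}]$.

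The third ingredient is the identification of each $\ker B_{d,s}$ with the irreducible $SL(n+2,\mathbb{C})$-representation $\oooo{d-2s}{s}{s}{d-2s}$. On the big cell of the homogeneous model, equipped with the very flat Weyl structure, the first BGG operator of an irreducible bundle has kernel canonically isomorphic to the corresponding finite-dimensional irreducible module of the structure group; this is the standard BGG statement on flat parabolic geometries (\cite{parabook}). The $SL(n+2,\mathbb{C})$-equivariance of $\Psi$ is then automatic, since the group acts on the ambient $\mathbb{C}^{n+2}$ preserving $g_{A\bar B}$, $r$, and $\tilde\Delta$, hence preserves the space of operators of the form $V^{B_1\dots B_d}_{A_1\dots A_d}(x^{A}\partial_{B}-x_{B}\partial^{A})\cdots$ used in the ambient construction as well as the symbol extraction formulas (\ref{symbd}), and the ideal of trivial symmetries is an invariant submodule.

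The main obstacle is a compatibility issue between the two directions. Proposition~\ref{prop1} fixes the ambient lift $V^{B_1\dots B_d}_{A_1\dots A_d}$ only up to choice of the correction coefficients $x_i$ that are selected to annihilate the lower-level seeds, and one must verify that the resulting assignment $T\mapsto [P_T]$ actually descends to an equivariant map on equivalence classes. This reduces to the observation that any two ambient lifts of the same $T$ produce symmetries with identical full seed tuples, hence by the injectivity established in the first step they differ only by a trivial symmetry; the equivalence class $[P_T]$ is therefore intrinsic, and the manifest $SL(n+2,\mathbb{C})$-equivariance of the ambient construction passes to $\Psi$. Combining the three ingredients exhibits $\Psi$ as an isomorphism of $SL(n+2,\mathbb{C})$-modules, which is the claim.
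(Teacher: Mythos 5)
Your proposal is correct and takes essentially the same route as the paper: it combines the normal form of Theorem~\ref{thmsymb} with the existence statement of Proposition~\ref{prop1} and the equivariance of the ambient construction. The one point you treat more carefully than the paper does—that the assignment $T\mapsto[P_T]$ is independent of the choice of ambient lift—is handled implicitly in the paper via the iterative subtraction $P\mapsto P-P_V$ producing a canonical form; your well-definedness argument (two lifts have identical seed tuples, hence equivalent by your injectivity step) is a valid way to close the same gap.
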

\begin{proof}
Since $\Delta$ is $SU(p+1,q+1)$-invariant operator acting on complex densities, its symmetries form a complex representation of $SU(p+1,q+1)$, and hence a representation of $SL(n+2,\mathbb{C})$. Proposition \ref{prop1} shows the existence of many symmetries of $\Delta$ on $M$. Together with Theorem \ref{thmsymb}, it also allows us to put any symmetry into a canonical form. Concretely, if $P$ is a symmetry operator of degree $(d,s)$, then we nay apply Theorem \ref{thmsymb} to normalize its symbol part $V^{a_{1}\dots a_{s}\sigma\dots\sigma}_{b_{1}\dots b_{s}}$ to lie in the kernel of corresponding BGG operator. Now consider $P-P_{V}$, where $P_{V}$ is from Proposition \ref{prop1}. By construction, this is a symmetry of $\Delta$ of degree less than $(d,s)$. Continuing this way we obtain a canonical form, namely
\begin{displaymath}
P_{V_{(d,s)}}+P_{V_{(d,s+1)}}+\dots+P_{V_{(1,0)}}+V_{0}
\end{displaymath} 
where $V_{(d',s')}$ lies in the kernel of the corresponding BGG operator ($V_{0}$ is a constant). Since the construction in the proof of Proposition \ref{prop1} is equivariant, Proposition \ref{prop1} together with Theorem \ref{thmsymb} imply that the vector space of symmetries of $\Delta$ is as a module for $SL(n+2,\mathbb{C})$ canonically isomorphic to
\begin{displaymath}
	\mathcal{A}=\bigoplus_{0\leq 2s\leq d}\quad\oooo{d-2s}{s}{s}{d-2s}\quad
\end{displaymath}
\end{proof}

\section{Decomposition of $S^{k}_{0}\mathfrak{sl}(V)$}

\subsection{$\End_{SL(V)}S^{k}_{0}\mathfrak{sl}(V)$}

We start with introducing some notation. Consider some complex vector space $V$ (of dimension $n$). Let ${e_{1},\dots,e_{n}}$ be some basis of $V$ and let ${\varepsilon^{1},\dots,\varepsilon^{n}}$ be the dual basis of $V^{*}$. There is a standard action of $GL(V)$ on $V$, $V^{*}$ and their tensor powers.\par
On $k$-th tensor power of $V$, we have a standard action of $\mathfrak{S}_{k}$. Let $I=(i_{1},\dots,i_{k})$ with $1\leq i_{1},\dots, i_{k}\leq n$ be a multiindex. Then the action of $s\in\mathfrak{S}_{k}$ on $\otimes^{k}V$ is given by
\begin{displaymath}
	s\cdot e_{I}\equiv e_{s\cdot I}:=e_{s^{-1}(i_{1})}\otimes\dots\otimes e_{s^{-1}(i_{k})}
\end{displaymath}
This means that if $s(i)=j$, then s takes the vector on $i$-th position and puts it on the $j$-th position. These operators are always linearly independent.\par
We try to do an analogy to Schur duality for $S^{k}_{0}\mathfrak{sl}(n+2,\mathbb{C})$. We start with computing $\End_{GL(V)}S^{k}\mathfrak{gl}(V)$.\par
We know that $\mathfrak{gl}(V)\cong V\otimes V^{*}$ as $GL(V)$-module. So we can identify $\End(\otimes^{k}\mathfrak{gl}(V))$ with $(\bigotimes^{2k}V)\otimes(\bigotimes^{2k}V^{*})$. Here we use the identification $(\bigotimes^{2k}V)\otimes(\bigotimes^{2k}V^{*})\cong\End((\bigotimes^{k}V)\otimes(\bigotimes^{k}V^{*}))$ given by
\begin{gather}
T=u^{1}\otimes\dots\otimes u^{2k}\otimes v_{1}\otimes\dots\otimes v_{2k}\mapsto\nonumber\\
\tilde{T}:w^{1}\otimes\dots\otimes w^{k}\otimes x_{1}\otimes\dots\otimes x_{k}\mapsto \nonumber\\
\prod_{i=1}^{k}u^{2i}(x_{i})\prod_{i=1}^{k}w^{i}(v_{2i})u^{1}\otimes u^{3}\otimes\dots\otimes u^{2k-1}\otimes v_{1}\otimes v_{3}\otimes\dots\otimes v_{2k-1}
\end{gather}
All $GL(V)$-invariant elements in $(\bigotimes^{2k}V)\otimes(\bigotimes^{2k}V^{*})$ are spanned by those of the form $C_{s}$ for some $s\in\mathfrak{S}_{2k}$ (see \cite{goodwall}), where
\begin{displaymath}
C_{s}:=\sum_{|I|=2k}\varepsilon^{I}\otimes e_{s\cdot I}
\end{displaymath}
Now we look at those operators, which preserve $S^{k}\mathfrak{gl}(V)$. For this purpose let's consider two subgroups of $\mathfrak{S}_{2k}$, denoted by $\mathfrak{S}^{1}_{k}$ and $\mathfrak{S}^{2}_{k}$. The group $\mathfrak{S}^{1}_{k}$ is the subgroup preserving all even numbers and $\mathfrak{S}^{2}_{k}$ is the subgroup preserving all odd numbers, respectively. To $\sigma\in\mathfrak{S}^{1}_{k}$ we associate a permutation $\hat{\sigma}\in\mathfrak{S}_{k}$ given by $\hat{\sigma}(i)=j$ if and only if $\sigma(2i-1)=2j-1$. Similarly, to $\sigma\in\mathfrak{S}^{2}_{k}$ we associate a permutation $\hat{\sigma}\in\mathfrak{S}_{k}$ given by $\hat{\sigma}(i)=j$ if and only if $\sigma(2i)=2j$.
\begin{lemma}\label{even}
For $\sigma\in\mathfrak{S}^{2}_{k}$ we have
\begin{displaymath}
C_{Ad_{\sigma}s}\cdot\varepsilon^{\hat{\sigma}\cdot I}\otimes e_{\hat{\sigma}\cdot J}=C_{s}\cdot\varepsilon^{I}\otimes e_{J}
\end{displaymath}
so the operators $C_{s}$ and $C_{Ad_{\sigma}s}$ induce the same operator on $S^{k}\mathfrak{gl}(V)$. Conversely, for given $\sigma\in\mathfrak{S}^{2}_{k}$, $C_{Ad_{\sigma}s}$ is the unique operator on $\otimes^{k}\mathfrak{gl}(V)$ mapping $\varepsilon^{\hat{\sigma}\cdot I}\otimes e_{\hat{\sigma}\cdot J}$ to 
$C_{s}\varepsilon^{I}\otimes e_{J}$ for any $(I,J)$.
\end{lemma}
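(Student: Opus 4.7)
The plan is to verify the displayed equation by a direct calculation using the explicit endomorphism identification; the ``uniqueness'' clause is then immediate from elementary linear algebra, since $\hat\sigma$ is a bijection on length-$k$ multi-indices and hence $\{\varepsilon^{\hat\sigma\cdot I}\otimes e_{\hat\sigma\cdot J}\}_{I,J}$ is just a relabelling of the standard basis of $\otimes^k\mathfrak{gl}(V)$, on which a linear operator is determined by its values.

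For the equation itself, I would first unfold the action of $C_t$: for any $t\in\mathfrak{S}_{2k}$ and length-$k$ multi-indices $I=(i_1,\dots,i_k)$, $J=(j_1,\dots,j_k)$, the identification gives
\[
C_t\cdot(\varepsilon^I\otimes e_J)=\sum_{M}\prod_{p=1}^{k}\delta^{m_{2p}}_{j_p}\prod_{p=1}^{k}\delta^{i_p}_{m_{t^{-1}(2p)}}\,\varepsilon^{m_1}\otimes\varepsilon^{m_3}\otimes\cdots\otimes e_{m_{t^{-1}(1)}}\otimes e_{m_{t^{-1}(3)}}\otimes\cdots,
\]
the sum running over length-$2k$ multi-indices $M=(m_1,\dots,m_{2k})$. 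The right-hand side of the lemma is this formula with $t=s$; the left-hand side is the same formula with $t$ replaced by $\sigma s\sigma^{-1}$ and $(I,J)$ replaced by $(\hat\sigma\cdot I,\hat\sigma\cdot J)$, so $t^{-1}=\sigma s^{-1}\sigma^{-1}$.

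Next I would reindex the LHS. First substitute $p\mapsto\hat\sigma(p)$ in each of the two products, using the relation $\sigma(2p)=2\hat\sigma(p)$ to rewrite the two Kronecker factors as $\delta^{m_{\sigma(2p)}}_{j_p}$ and $\delta^{i_p}_{m_{\sigma(s^{-1}(2p))}}$; then substitute $m_q\mapsto m_{\sigma(q)}$ in the summation, which is a bijection on multi-indices. The delta factors then become exactly $\delta^{m_{2p}}_{j_p}$ and $\delta^{i_p}_{m_{s^{-1}(2p)}}$, matching the RHS. In the output tensor only the values $m_q$ at odd positions $q$ and at positions $t^{-1}(2q-1)=\sigma s^{-1}(2q-1)$ appear; since $\sigma\in\mathfrak{S}^2_k$ fixes every odd element of $\{1,\dots,2k\}$, after the substitution these become $m_{2q-1}$ and $m_{s^{-1}(2q-1)}$, again matching the RHS. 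Term by term the two sums coincide.

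The only real difficulty is bookkeeping: two different kinds of permutation --- on $\{1,\dots,2k\}$ for $\sigma$ and $s$, on $\{1,\dots,k\}$ for $\hat\sigma$ --- must be kept straight, and both the relation $\sigma(2p)=2\hat\sigma(p)$ and the characterizing property of $\mathfrak{S}^2_k$ (fixing odd positions) must be invoked at precisely the right moments so that all $\sigma$'s and $\hat\sigma$'s visibly cancel in the reindexed expression.
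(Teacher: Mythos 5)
Your calculation is correct and follows essentially the same route as the paper's proof: unfold the explicit endomorphism formula for $C_t$, reindex the product $p\mapsto\hat\sigma(p)$ and the dummy multi-index $M\mapsto\sigma\cdot M$, and use that $\sigma\in\mathfrak{S}^2_k$ fixes odd positions plus $\sigma(2p)=2\hat\sigma(p)$ to cancel all $\sigma$'s and $\hat\sigma$'s (the paper merely performs the two re-indexings in the opposite order, by first rewriting $C_{Ad_\sigma s}=\sum_{J}\varepsilon^{\sigma\cdot J}\otimes e_{\sigma s\cdot J}$). The only thing you leave implicit is the one-line deduction that the displayed identity forces $C_s$ and $C_{Ad_\sigma s}$ to agree on $S^k\mathfrak{gl}(V)$ (because symmetric tensors are spanned by $\sum_{\pi\in\mathfrak{S}_k}\varepsilon^{\pi\cdot I}\otimes e_{\pi\cdot J}$, on which simultaneous relabelling by $\hat\sigma$ acts trivially); it is worth stating since it is part of the lemma's conclusion.
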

\begin{proof}
First we observe that
\begin{gather}\label{ad}
C_{Ad_{\sigma}s}=\sum_{I}\varepsilon^{I}\otimes e_{\sigma s\sigma^{-1}\cdot I}=\sum_{J=\sigma^{-1}I}\varepsilon^{\sigma\cdot J}\otimes e_{\sigma s\cdot J}
\end{gather}
Then we have
\begin{gather*}
C_{Ad_{\sigma}s}\cdot\varepsilon^{\hat{\sigma}\cdot I}\otimes e_{\hat{\sigma}\cdot J}=\\
=\sum_{|L|=2k}\prod_{m=1}^{k}\varepsilon^{l_{\sigma^{-1}(2m)}}(e_{j_{\hat{\sigma}^{-1}}(m)})\prod_{m=1}^{k}\varepsilon^{i_{\hat{\sigma}^{-1}(m)}}(e_{l_{s^{-1}\sigma^{-1}(2m)}})\cdot\\
\cdot\varepsilon^{l_{1}}\otimes\varepsilon^{l_{3}}\otimes\dots\otimes\varepsilon^{l_{2k-1}}\otimes e_{l_{s^{-1}(1)}}\otimes e_{l_{s^{-1}(3)}}\otimes\dots\otimes e_{l_{s^{-1}(2k-1)}}=\\
=\sum_{|L|=2k}\prod_{m=1}^{k}\varepsilon^{l_{2m}}(e_{j_{m}})\prod_{m=1}^{k}\varepsilon^{i_{m}}(e_{l_{2m}})\cdot\\
\cdot\varepsilon^{l_{1}}\otimes\varepsilon^{l_{3}}\otimes\dots\otimes\varepsilon^{l_{2k-1}}\otimes e_{l_{s^{-1}(1)}}\otimes e_{l_{s^{-1}(3)}}\otimes\dots\otimes e_{l_{s^{-1}(2k-1)}}=\\
=C_{s}\cdot\varepsilon^{I}\otimes e_{J}
\end{gather*}
Since $S^{k}\mathfrak{gl}(V)$ is generated by elements of the form $\sum_{\sigma\in\mathfrak{S}_{k}}\varepsilon^{\sigma\cdot I}\otimes e_{\sigma\cdot J}$, we see, that $C_{s}$ and $C_{Ad_{\sigma}s}$ really induce the same operator on $S^{k}\mathfrak{gl}(V)$.\par
For the converse, we shall realize that $\{\varepsilon^{I}\otimes e_{J}\}_{I,J}$ is a basis of $\otimes^{k}\mathfrak{gl}(V)$ and every operator is given by its values on the basis.
\end{proof}
\begin{lemma}\label{odd}
Let $\sigma\in\mathfrak{S}^{1}_{k}$. Then if 
\begin{displaymath}
C_{s}\cdot\varepsilon^{I}\otimes e_{J}=\sum_{P,Q}A^{Q}_{P}\varepsilon^{P}\otimes e_{Q}
\end{displaymath}
we have
\begin{displaymath}
C_{Ad_{\sigma}s}\cdot\varepsilon^{I}\otimes e_{J}=\sum_{P,Q}A^{Q}_{P}\varepsilon^{\hat{\sigma}\cdot P}\otimes e_{\hat{\sigma}\cdot Q}
\end{displaymath}
Conversely, $C_{Ad_{\sigma}s}$ is the only operator with the above property.
\end{lemma}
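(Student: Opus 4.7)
The plan is to mirror the argument used for Lemma \ref{even}, exchanging the roles of odd and even positions in $\{1,\dots,2k\}$. Under the identification $(\bigotimes^{2k}V^{*})\otimes(\bigotimes^{2k}V)\cong\End((\bigotimes^{k}V^{*})\otimes(\bigotimes^{k}V))$, the even positions correspond to input contractions while the odd positions survive as outputs. A $\sigma\in\mathfrak{S}^{1}_{k}$ fixes every even integer and permutes the odd ones via $\hat{\sigma}$, so one expects $\hat{\sigma}$ to act on the \emph{output} rather than the input, which is exactly what the lemma asserts.

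First I would reindex using $C_{Ad_{\sigma}s}=\sum_{J}\varepsilon^{\sigma\cdot J}\otimes e_{\sigma s\cdot J}$ exactly as in (\ref{ad}), and then apply this to $\varepsilon^{I}\otimes e_{J}$ via the defining formula for $\tilde{T}$. The resulting expansion is a sum over multiindices $L$ of length $2k$ whose scalar coefficient is $\prod_{m}\varepsilon^{l_{\sigma^{-1}(2m)}}(e_{j_{m}})\prod_{m}\varepsilon^{i_{m}}(e_{l_{s^{-1}\sigma^{-1}(2m)}})$, and whose surviving tensor part has $m$-th $V^{*}$-factor $\varepsilon^{l_{\sigma^{-1}(2m-1)}}$ and $m$-th $V$-factor $e_{l_{s^{-1}\sigma^{-1}(2m-1)}}$. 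The crucial input is then $\sigma^{-1}(2m)=2m$, which collapses the scalar coefficients to $\prod_{m}\varepsilon^{l_{2m}}(e_{j_{m}})\prod_{m}\varepsilon^{i_{m}}(e_{l_{s^{-1}(2m)}})$, i.e.\ exactly the scalar coefficients appearing in the expansion of $C_{s}\cdot\varepsilon^{I}\otimes e_{J}=\sum_{P,Q}A^{Q}_{P}\,\varepsilon^{P}\otimes e_{Q}$ when one sets $p_{m}=l_{2m-1}$ and $q_{m}=l_{s^{-1}(2m-1)}$.

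On the odd positions, the identity $\sigma^{-1}(2m-1)=2\hat{\sigma}^{-1}(m)-1$ turns the $m$-th $V^{*}$-output into $\varepsilon^{p_{\hat{\sigma}^{-1}(m)}}$ and the $m$-th $V$-output into $e_{q_{\hat{\sigma}^{-1}(m)}}$. By the convention $s\cdot e_{I}=e_{s\cdot I}$ in which $(s\cdot I)_{a}=i_{s^{-1}(a)}$, these reindexed strings are precisely $\varepsilon^{\hat{\sigma}\cdot P}$ and $e_{\hat{\sigma}\cdot Q}$. Collecting terms yields $C_{Ad_{\sigma}s}\cdot\varepsilon^{I}\otimes e_{J}=\sum_{P,Q}A^{Q}_{P}\,\varepsilon^{\hat{\sigma}\cdot P}\otimes e_{\hat{\sigma}\cdot Q}$, which is the asserted identity. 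The converse is immediate by the same remark as at the end of the proof of Lemma \ref{even}: $\{\varepsilon^{I}\otimes e_{J}\}_{I,J}$ is a basis of $\bigotimes^{k}\mathfrak{gl}(V)$, so any operator is determined by its values on this basis, and the displayed property fixes those values in terms of the known $C_{s}$.

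The only real obstacle is bookkeeping: one must check that the $\sigma^{-1}$ factor threads through the $V^{*}$- and $V$-slots of $C_{Ad_{\sigma}s}$ in the same way, so that a \emph{single} $\hat{\sigma}$ (rather than $\hat{\sigma}$ on one factor and $\hat{\sigma}^{-1}$ on the other) appears in the output. This is precisely what the formula $v_{a}=e_{l_{s^{-1}\sigma^{-1}(a)}}$ coming from $(\sigma s)^{-1}=s^{-1}\sigma^{-1}$ guarantees, and it is the step I would verify carefully before writing up the proof.
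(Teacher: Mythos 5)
Your proposal is correct and follows essentially the same route as the paper: expand $C_{Ad_{\sigma}s}$ via formula (\ref{ad}), use $\sigma^{-1}(2m)=2m$ for $\sigma\in\mathfrak{S}^{1}_{k}$ to collapse the scalar contractions to those of $C_{s}$, translate $\sigma^{-1}(2m-1)=2\hat{\sigma}^{-1}(m)-1$ into $\hat{\sigma}$ acting on the output strings, and dispose of uniqueness by observing that $\{\varepsilon^{I}\otimes e_{J}\}$ is a basis. The paper performs the change of variables $L'=\sigma\cdot L$ after writing the raw expansion rather than substituting (\ref{ad}) up front, but that is a cosmetic difference.
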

\begin{proof}
Put $p_{i}:=l_{2i-1}$ and $q_{i}:=l_{s^{-1}(2i-1)}$. Then we have
\begin{gather*}
C_{Ad_{\sigma}s}\cdot\varepsilon^{I}\otimes e_{J}=\sum_{|L'|=2k}\prod_{m=1}^{k}\varepsilon^{l'_{2m}}(e_{j_{m}})\prod_{m=1}^{k}\varepsilon^{i_{m}}(e_{l'_{\sigma s^{-1}(2m)}})\cdot\\
\cdot\varepsilon^{l'_{1}}\otimes\varepsilon^{l'_{3}}\otimes\dots\otimes\varepsilon^{l'_{2k-1}}\otimes
e_{l'_{\sigma s^{-1}\sigma^{-1}(1)}}\otimes e_{l'_{\sigma s^{-1}\sigma^{-1}(3)}}\otimes\dots\otimes e_{l'_{\sigma s^{-1}\sigma^{-1}(2k-1)}}=\\
=\sum_{L'=\sigma\cdot L}\prod_{m=1}^{k}\varepsilon^{l_{2m}}(e_{j_{m}})\prod_{m=1}^{k}\varepsilon^{i_{m}}(e_{l_{s^{-1}(2m)}})\cdot\\
\cdot\varepsilon^{l_{\sigma^{-1}(1)}}\otimes\varepsilon^{l_{\sigma^{-1}(3)}}\otimes\dots\otimes\varepsilon^{l_{\sigma^{-1}(2k-1)}}\otimes
e_{l_{s^{-1}\sigma^{-1}(1)}}\otimes e_{l_{s^{-1}\sigma^{-1}(3)}}\otimes\dots\otimes e_{l_{s^{-1}\sigma^{-1}(2k-1)}}=\\
=\sum_{|L|=2k}\prod_{m=1}^{k}\varepsilon^{l_{2m}}(e_{j_{m}})\prod_{m=1}^{k}\varepsilon^{i_{m}}(e_{l_{s^{-1}(2m)}})
\varepsilon^{\hat{\sigma}\cdot P}\otimes e_{\hat{\sigma}\cdot Q}
\end{gather*}
By (\ref{ad}) we have
\begin{gather*}
C_{s}\cdot\varepsilon^{I}\otimes e_{J}=\sum_{|L|=2k}\prod_{m=1}^{k}\varepsilon^{l_{2m}}(e_{j_{m}})\prod_{m=1}^{k}\varepsilon^{i_{m}}(e_{l_{s^{-1}(2m)}})\varepsilon^{P}\otimes e_{Q}
\end{gather*}
The proof of the converse is the same as for the previous lemma.
\end{proof}
\begin{theorem}
The algebra $\End_{GL(V)}S^{k}\mathfrak{gl}(V)$ is generated by operators of the form
\begin{displaymath}
\sum_{\sigma\in\mathfrak{S}^{1}_{k}\times\mathfrak{S}^{2}_{k}}\frac{1}{(k!)^{2}}C_{Ad_{\sigma}s}
\end{displaymath}
\end{theorem}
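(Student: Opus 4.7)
The plan is to pass from $\End_{GL(V)}(\otimes^{k}\mathfrak{gl}(V))$, whose $GL(V)$-invariant structure is pinned down by the first fundamental theorem of invariants, to $\End_{GL(V)}(S^{k}\mathfrak{gl}(V))$ by symmetrising on both sides, and then to identify the resulting sums with the claimed averages $\frac{1}{(k!)^{2}}\sum_{\sigma\in\mathfrak{S}^{1}_{k}\times\mathfrak{S}^{2}_{k}}C_{Ad_{\sigma}s}$ by means of the two preceding lemmas.

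I first note that $S^{k}\mathfrak{gl}(V)$ is precisely the image of the symmetrisation projector $\pi=\frac{1}{k!}\sum_{\tau\in\mathfrak{S}_{k}}\rho(\tau)\in\End(\otimes^{k}\mathfrak{gl}(V))$, where $\rho$ is the $\mathfrak{S}_{k}$-action permuting the $k$ tensor factors. Because $\pi$ is $GL(V)$-equivariant, any $\Phi\in\End_{GL(V)}(S^{k}\mathfrak{gl}(V))$ arises as the restriction of $\pi\circ T\circ\pi$ for some $T\in\End_{GL(V)}(\otimes^{k}\mathfrak{gl}(V))$: take $T=i\circ\Phi\circ p$ with $i$ the inclusion and $p$ the canonical surjection, and observe $\pi\circ i=i$ and $p\circ\pi=p$. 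Combined with the first fundamental theorem recalled just before Lemma \ref{even}, this shows that $\End_{GL(V)}(S^{k}\mathfrak{gl}(V))$ is spanned by the operators $\pi\circ C_{s}\circ\pi$ for $s\in\mathfrak{S}_{2k}$.

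The remaining step is to rewrite $\pi\circ C_{s}\circ\pi$ in the claimed form. I would recast the two lemmas as operator identities on $\otimes^{k}\mathfrak{gl}(V)$: Lemma \ref{even} gives $C_{Ad_{\sigma_{2}}s}=C_{s}\circ\rho(\hat{\sigma}_{2})^{-1}$ for $\sigma_{2}\in\mathfrak{S}^{2}_{k}$, while Lemma \ref{odd} gives $C_{Ad_{\sigma_{1}}s}=\rho(\hat{\sigma}_{1})\circ C_{s}$ for $\sigma_{1}\in\mathfrak{S}^{1}_{k}$. Since $\mathfrak{S}^{1}_{k}$ and $\mathfrak{S}^{2}_{k}$ commute inside $\mathfrak{S}_{2k}$ (they act on disjoint index sets), for $\sigma=\sigma_{1}\sigma_{2}$ we obtain $C_{Ad_{\sigma}s}=\rho(\hat{\sigma}_{1})\circ C_{s}\circ\rho(\hat{\sigma}_{2})^{-1}$. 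Averaging and using that both $\sigma\mapsto\hat{\sigma}$ (a group isomorphism onto $\mathfrak{S}_{k}$) and $\sigma\mapsto\sigma^{-1}$ are bijections of the relevant groups, this yields
\begin{displaymath}
\frac{1}{(k!)^{2}}\sum_{\sigma\in\mathfrak{S}^{1}_{k}\times\mathfrak{S}^{2}_{k}}C_{Ad_{\sigma}s}=\pi\circ C_{s}\circ\pi,
\end{displaymath}
matching the spanning set identified in the previous step.

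The main obstacle is essentially bookkeeping: one must verify that the subgroups $\mathfrak{S}^{1}_{k}$ (acting on odd positions) and $\mathfrak{S}^{2}_{k}$ (acting on even positions) really implement the $\mathfrak{S}_{k}$-symmetrisations on the \emph{output} and \emph{input} sides of $\tilde{T}$ respectively, under the identification of $\bigotimes^{2k}V\otimes\bigotimes^{2k}V^{*}$ with $\End(\bigotimes^{k}V\otimes\bigotimes^{k}V^{*})$. Once this correspondence is traced through the definition of $\tilde{T}$ at the start of the section (the output positions of $\tilde{T}$ are the odd ones and the input pairings use the even ones), the two lemmas plug in as described and the theorem is immediate.
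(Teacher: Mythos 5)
Your proof is correct and follows essentially the same route as the paper: both reduce to the first fundamental theorem spanning $\End_{GL(V)}(\otimes^{k}\mathfrak{gl}(V))$ by the $C_{s}$ and then invoke Lemmas \ref{even} and \ref{odd} to describe symmetrisation on the input and output sides. Your formulation via the projector $\pi$ and the clean identity $\pi\circ C_{s}\circ\pi=\frac{1}{(k!)^{2}}\sum_{\sigma\in\mathfrak{S}^{1}_{k}\times\mathfrak{S}^{2}_{k}}C_{Ad_{\sigma}s}$ is tidier and more precise than the paper's informal linear-independence argument, but the underlying mechanism is the same.
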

\begin{proof}
This is a direct consequence of Lemma \ref{even} and Lemma \ref{odd}. Since all $C_{s}$ are linearly independent, it suffices for given $s$ to symmetrize the result of $C_{s}\cdot\varepsilon^{I}\otimes e_{J}$. From Lemma \ref{odd} we know one way how to do it. The linear independence of operators $C_{s}$ tells us that we can't replace $C_{Ad_{\sigma}s}$ by any expression of the form $\sum_{s'\neq Ad_{\sigma}s}a_{s'}C_{s'}$. So $\sum_{\sigma\in\mathfrak{S}^{1}_{k}}C_{Ad_{\sigma}s}$ is not only the only operator symmetrizing the result of $C_{s}$ (up to constant multiple), but also the only possible expression of this operator (provided that all operators in the expression are distinct). Lemma \ref{even} tells us that we can anywhere in this expression replace $s$ by $Ad_{\sigma}s$, $s\in\mathfrak{S}^{2}_{k}$, when acting on symmetric tensors.
\end{proof}
It remains to investigate the behaviour of the operators $C_{s}$ with respect to traces. Looking at the action of $C_{s}$ on general tensors, we have
\begin{gather*}
C_{s}\cdot\sum_{I,J}v^{J}_{I}\varepsilon^{I}\otimes e_{J}=\sum_{I,J}v^{J}_{I}\sum_{|L|=2k}\prod_{m=1}^{k}\delta^{l_{2m}}_{j_{m}}\prod_{m=1}^{k}\delta^{i_{m}}_{l_{s^{-1}(2m)}}\cdot\\
\cdot\varepsilon^{l_{1}}\otimes\varepsilon^{l_{3}}\otimes\dots\otimes\varepsilon^{l_{2k-1}}\otimes e_{l_{s^{-1}(1)}}\otimes e_{l_{s^{-1}(3)}}\otimes\dots
\otimes e_{l_{s^{-1}(2k-1)}}
\end{gather*}
Assume there is a pair $(m,n)$ such that $s(2m)=2n$. This poses a condition on possible pairs $(I,J)$, namely $i_{n}=j_{m}$. Since we are summing over all possible pairs $(I,J)$, we see that in the resulting expression we are summing not $v^{I}_{J}$-s, but their traces over $i_{n}=j_{m}$. This means that for trace-free tensors the result is always zero.\par
Assume now there is no such pair. In this case for every $m$ there exists $n$ such that $s(2m-1)=2n$. So we can associate to $s$ two permutations $\sigma_{1},\sigma_{2}\in\mathfrak{S}_{k}$ given by $\sigma^{s}_{1}(m)=n$ if and only if $s(2m-1)=2n$ and $\sigma^{s}_{2}(m)=n$ if and only if $s(2m)=2n-1$, respectively. In addition, the index $L$ is fully determined by $(I,J)$. This property is invariant under $Ad(\mathfrak{S}^{1}_{k}\times\mathfrak{S}^{2}_{k})$. Putting all this together, we get
\begin{gather}\label{sigma_{1,2}}
C_{s}\cdot\sum_{I,J}v^{J}_{I}\varepsilon^{I}\otimes e_{J}=\sum_{I,J}v^{J}_{I}\varepsilon^{(\sigma^{s}_{1})^{-1}\cdot I}\otimes e_{\sigma^{s}_{2}\cdot J}
\end{gather}
so it is nonzero on $S^{k}_{0}\mathfrak{gl}(V)$.\par
Since $S^{k}_{0}\mathfrak{gl}(V)=S^{k}_{0}\mathfrak{sl}(V)$, and the adjoint action of centre of $GL(V)$ on $\mathfrak{gl}(V)$ is trivial by definition, we have proved the following
\begin{theorem}
The algebra $\End_{SL(V)}S^{k}_{0}\mathfrak{sl}(V)$ is (as vector space) generated by operators of the form
\begin{gather}\label{basis}
\sum_{\sigma\in\mathfrak{S}^{1}_{k}\times\mathfrak{S}^{2}_{k}}\frac{1}{(k!)^{2}}C_{Ad_{\sigma}s}
\end{gather}
where $s\in\mathfrak{S}_{2k}$ is such that it interchanges even and odd numbers.
\end{theorem}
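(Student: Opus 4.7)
The plan is to reduce the theorem to the previous one on $\End_{GL(V)}S^{k}\mathfrak{gl}(V)$ and then cut down to the trace-free subspace using the trace analysis carried out just before the statement. Two quick preliminary reductions set this up: first, since the adjoint action of the center of $GL(V)$ on $\mathfrak{gl}(V)$ is trivial, $GL(V)$- and $SL(V)$-equivariance agree on $S^{k}\mathfrak{gl}(V)$ and on all of its subquotients, so $\End_{SL(V)}S^{k}_{0}\mathfrak{sl}(V)=\End_{GL(V)}S^{k}_{0}\mathfrak{sl}(V)$; second, $S^{k}_{0}\mathfrak{sl}(V)=S^{k}_{0}\mathfrak{gl}(V)$ is a $GL(V)$-direct summand of $S^{k}\mathfrak{gl}(V)$ (by complete reducibility together with the standard $GL(V)$-invariant trace projector). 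Consequently, pre- and post-composing with inclusion and projection produces a surjection from $\End_{GL(V)}S^{k}\mathfrak{gl}(V)$ onto $\End_{SL(V)}S^{k}_{0}\mathfrak{sl}(V)$, and the previous theorem supplies an explicit spanning set upstairs by the symmetrized operators (\ref{basis}).

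Next I would determine which symmetrized operators survive the restriction. The discussion immediately preceding the theorem does exactly this on the level of the individual $C_s$: if there exists a pair $(m,n)$ with $s(2m)=2n$ (equivalently, since $s$ is a bijection, a pair with $s(2m'-1)=2n'-1$), then evaluating $C_s$ on $\sum v^{J}_{I}\varepsilon^{I}\otimes e_{J}$ identifies $i_{n}$ with $j_{m}$ and hence outputs a trace of $v^{J}_{I}$, vanishing on $S^{k}_{0}\mathfrak{sl}(V)$. Conversely, if $s$ interchanges even and odd numbers in $\{1,\dots,2k\}$, then by (\ref{sigma_{1,2}}) the action of $C_s$ is a pure permutation of upper and lower indices by the associated $\sigma_{1}^{s},\sigma_{2}^{s}\in\mathfrak{S}_{k}$, with no contraction, and so is nonzero on trace-free symmetric tensors. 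This restricts the spanning set to the operators indexed by $s$ of the asserted type, and symmetrization over $\mathfrak{S}^{1}_{k}\times\mathfrak{S}^{2}_{k}$ (which the previous theorem has already folded into the definition of the operators) accounts for the remaining redundancies among such $s$.

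The main obstacle I anticipate is the linear independence of the resulting operators after restriction to $S^{k}_{0}\mathfrak{sl}(V)$: on the full tensor space $\otimes^{2k}V\otimes\otimes^{2k}V^{*}$ the $C_{s}$ are manifestly independent, but one needs that distinct $(\mathfrak{S}^{1}_{k}\times\mathfrak{S}^{2}_{k})$-orbits of interchanging $s$ still induce linearly independent endomorphisms after projecting to the trace-free symmetric summand. The cleanest way I would try is to evaluate (\ref{sigma_{1,2}}) on a sufficiently generic trace-free symmetric $v^{J}_{I}$ and read off the orbit of $s$ from the resulting pattern of $(\sigma_{1}^{s},\sigma_{2}^{s})$-indexed monomials; this is where the bulk of the bookkeeping lies, but once done it completes both the spanning and the independence, and the theorem follows immediately.
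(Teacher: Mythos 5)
Your argument is correct and follows essentially the same route as the paper: reduce to the $GL(V)$-result for $\End_{GL(V)}S^{k}\mathfrak{gl}(V)$, pass to $SL(V)$ via the trivial adjoint action of the center on $\mathfrak{gl}(V)$, and sort the $C_{s}$ according to whether evaluation on trace-free symmetric tensors forces a contraction (the discussion around (\ref{sigma_{1,2}})). The linear-independence worry you flag is not needed here, since the theorem only asserts spanning; your surjection plus the trace analysis already establishes that, and independence of the surviving symmetrized operators is what the paper addresses afterward via the bijection with conjugacy classes of $\mathfrak{S}_{k}$.
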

\begin{example}
For $k=1$ there is only one permutation exchanging odd and even numbers, namely transposition $(12)$. The induced operator is simply the identity.
\end{example}
\begin{example}
For $k=2$ there are two classes of permutations exchanging odd and even numbers. First class consists of
\begin{displaymath}
s_{1}=(12)(34)\qquad s_{2}=(14)(32)	
\end{displaymath}
with the induced operator being the identity, and the second one consists of
\begin{displaymath}
s_{3}=(1234)\qquad s_{4}=(1432)	
\end{displaymath}
with the induced operator being
\begin{gather}\label{2transposition}
\frac{1}{4}(2C_{s_{3}}+2C_{s_{4}})\cdot\varepsilon^{I}\otimes e_{J}=\frac{1}{2}(\varepsilon^{i_{1}}\otimes\varepsilon^{i_{2}}\otimes e_{j_{2}}\otimes e_{j_{1}}+
\varepsilon^{i_{2}}\otimes\varepsilon^{i_{1}}\otimes e_{j_{1}}\otimes e_{j_{2}}),
\end{gather}
respectively.
\end{example}
\begin{example}\label{k3}
For $k=3$ there are three classes of permutations interchanging odd and even numbers. First class contains
\begin{gather*}
s_{1}=(12)(34)(56)\qquad s_{3}=(14)(32)(56)\qquad s_{5}=(16)(32)(54)\\
s_{2}=(12)(36)(54)\qquad s_{4}=(14)(36)(52)\qquad s_{6}=(16)(34)(52)
\end{gather*}
with the induced operator being the identity. The second class contains
\begin{gather*}
s_{7}=(12)(3456)\qquad s_{9}=(14)(3256)\qquad s_{11}=(16)(3254)\\
s_{8}=(12)(3654)\qquad s_{10}=(14)(3652)\qquad s_{12}=(16)(3452)\\
s_{13}=(32)(1456)\qquad s_{15}=(34)(1256)\qquad s_{17}=(36)(1254)\\
s_{14}=(32)(1654)\qquad s_{16}=(34)(1652)\qquad s_{18}=(36)(1452)\\
s_{19}=(52)(1436)\qquad s_{21}=(54)(1236)\qquad s_{23}=(56)(1234)\\
s_{20}=(52)(1634)\qquad s_{22}=(54)(1632)\qquad s_{24}=(56)(1432)
\end{gather*}
with the induced operator being
\begin{gather}\label{3transposition}
\frac{1}{36}\left(2C_{s_{7}}+2C_{s_{8}}+2C_{s_{9}}+2C_{s_{10}}+2C_{s_{11}}+2C_{s_{12}}+\right.\nonumber\\
\left. +2C_{s_{13}}+2C_{s_{14}}+2C_{s_{15}}+2C_{s_{16}}+2C_{s_{17}}+2C_{s_{18}}+\right.\nonumber\\
\left. +2C_{s_{19}}+2C_{s_{20}}+2C_{s_{21}}+2C_{s_{22}}+2C_{s_{23}}+2C_{s_{24}}\right)\cdot\varepsilon^{I}\otimes e_{J}=\nonumber\\
=\frac{1}{18}\left(\varepsilon^{i_{1}}\otimes\varepsilon^{i_{2}}\otimes\varepsilon^{i_{3}}\otimes e_{j_{1}}\otimes e_{j_{3}}\otimes e_{j_{2}}+
\varepsilon^{i_{1}}\otimes\varepsilon^{i_{3}}\otimes\varepsilon^{i_{2}}\otimes e_{j_{1}}\otimes e_{j_{2}}\otimes e_{j_{3}}+\right.\nonumber\\
\left. +\varepsilon^{i_{2}}\otimes\varepsilon^{i_{1}}\otimes\varepsilon^{i_{3}}\otimes e_{j_{2}}\otimes e_{j_{3}}\otimes e_{j_{1}}+
\varepsilon^{i_{2}}\otimes\varepsilon^{i_{3}}\otimes\varepsilon^{i_{1}}\otimes e_{j_{2}}\otimes e_{j_{1}}\otimes e_{j_{3}}+\right.\\
\left. +\varepsilon^{i_{3}}\otimes\varepsilon^{i_{1}}\otimes\varepsilon^{i_{2}}\otimes e_{j_{3}}\otimes e_{j_{2}}\otimes e_{j_{1}}+
\varepsilon^{i_{3}}\otimes\varepsilon^{i_{2}}\otimes\varepsilon^{i_{1}}\otimes e_{j_{3}}\otimes e_{j_{1}}\otimes e_{j_{2}}+\right.\nonumber\\
\left. +\varepsilon^{i_{2}}\otimes\varepsilon^{i_{1}}\otimes\varepsilon^{i_{3}}\otimes e_{j_{3}}\otimes e_{j_{1}}\otimes e_{j_{2}}+
\varepsilon^{i_{3}}\otimes\varepsilon^{i_{1}}\otimes\varepsilon^{i_{2}}\otimes e_{j_{2}}\otimes e_{j_{1}}\otimes e_{j_{3}}+\right.\nonumber\\
\left. +\varepsilon^{i_{1}}\otimes\varepsilon^{i_{2}}\otimes\varepsilon^{i_{3}}\otimes e_{j_{3}}\otimes e_{j_{2}}\otimes e_{j_{1}}+
\varepsilon^{i_{3}}\otimes\varepsilon^{i_{2}}\otimes\varepsilon^{i_{1}}\otimes e_{j_{1}}\otimes e_{j_{2}}\otimes e_{j_{6}}+\right.\nonumber\\
\left. +\varepsilon^{i_{1}}\otimes\varepsilon^{i_{3}}\otimes\varepsilon^{i_{2}}\otimes e_{j_{2}}\otimes e_{j_{3}}\otimes e_{j_{1}}+
\varepsilon^{i_{2}}\otimes\varepsilon^{i_{3}}\otimes\varepsilon^{i_{1}}\otimes e_{j_{1}}\otimes e_{j_{3}}\otimes e_{j_{2}}+\right.\nonumber\\
\left. +\varepsilon^{i_{2}}\otimes\varepsilon^{i_{3}}\otimes\varepsilon^{i_{1}}\otimes e_{j_{3}}\otimes e_{j_{2}}\otimes e_{j_{1}}+
\varepsilon^{i_{3}}\otimes\varepsilon^{i_{2}}\otimes\varepsilon^{i_{1}}\otimes e_{j_{2}}\otimes e_{j_{3}}\otimes e_{j_{1}}+\right.\nonumber\\
\left. +\varepsilon^{i_{1}}\otimes\varepsilon^{i_{3}}\otimes\varepsilon^{i_{2}}\otimes e_{j_{3}}\otimes e_{j_{1}}\otimes e_{j_{2}}+
\varepsilon^{i_{3}}\otimes\varepsilon^{i_{1}}\otimes\varepsilon^{i_{2}}\otimes e_{j_{1}}\otimes e_{j_{3}}\otimes e_{j_{2}}+\right.\nonumber\\
\left. +\varepsilon^{i_{1}}\otimes\varepsilon^{i_{2}}\otimes\varepsilon^{i_{3}}\otimes e_{j_{2}}\otimes e_{j_{1}}\otimes e_{j_{3}}+
\varepsilon^{i_{2}}\otimes\varepsilon^{i_{1}}\otimes\varepsilon^{i_{3}}\otimes e_{j_{1}}\otimes e_{j_{2}}\otimes e_{j_{3}}\right)\nonumber
\end{gather}
The third class contains
\begin{gather*}
s_{25}=(123456)\qquad s_{27}=(143256)\qquad s_{29}=(163254)\\
s_{26}=(123654)\qquad s_{28}=(143652)\qquad s_{30}=(163452)\\
s_{31}=(125436)\qquad s_{33}=(145236)\qquad s_{35}=(165234)\\
s_{32}=(125634)\qquad s_{34}=(145632)\qquad s_{36}=(165432)
\end{gather*}
with the induced operator being
\begin{gather}\label{3-cycle}
\frac{1}{36}\left(3C_{s_{25}}+3C_{s_{26}}+3C_{s_{27}}+3C_{s_{28}}+3C_{s_{29}}+3C_{s_{30}}+\right.\nonumber\\
\left. +3C_{s_{31}}+3C_{s_{32}}+3C_{s_{33}}+3C_{s_{34}}+3C_{s_{35}}+3C_{s_{36}}\right)\cdot\varepsilon^{I}\otimes e_{J}=\nonumber\\
=\frac{1}{12}\left(\varepsilon^{i_{1}}\otimes\varepsilon^{i_{2}}\otimes\varepsilon^{i_{3}}\otimes e_{j_{3}}\otimes e_{j_{1}}\otimes e_{j_{2}}+
\varepsilon^{i_{1}}\otimes\varepsilon^{i_{3}}\otimes\varepsilon^{i_{2}}\otimes e_{j_{2}}\otimes e_{j_{1}}\otimes e_{j_{3}}+\right.\nonumber\\
\left. +\varepsilon^{i_{2}}\otimes\varepsilon^{i_{1}}\otimes\varepsilon^{i_{3}}\otimes e_{j_{3}}\otimes e_{j_{2}}\otimes e_{j_{1}}+
\varepsilon^{i_{2}}\otimes\varepsilon^{i_{3}}\otimes\varepsilon^{i_{1}}\otimes e_{j_{1}}\otimes e_{j_{2}}\otimes e_{j_{3}}+\right.\\
\left. +\varepsilon^{i_{3}}\otimes\varepsilon^{i_{1}}\otimes\varepsilon^{i_{2}}\otimes e_{j_{2}}\otimes e_{j_{3}}\otimes e_{j_{1}}+
\varepsilon^{i_{3}}\otimes\varepsilon^{i_{2}}\otimes\varepsilon^{i_{1}}\otimes e_{j_{1}}\otimes e_{j_{3}}\otimes e_{j_{2}}+\right.\nonumber\\
\left. +\varepsilon^{i_{1}}\otimes\varepsilon^{i_{3}}\otimes\varepsilon^{i_{2}}\otimes e_{j_{3}}\otimes e_{j_{2}}\otimes e_{j_{1}}+
\varepsilon^{i_{1}}\otimes\varepsilon^{i_{2}}\otimes\varepsilon^{i_{3}}\otimes e_{j_{2}}\otimes e_{j_{3}}\otimes e_{j_{1}}+\right.'\nonumber\\
\left. +\varepsilon^{i_{2}}\otimes\varepsilon^{i_{3}}\otimes\varepsilon^{i_{1}}\otimes e_{j_{3}}\otimes e_{j_{1}}\otimes e_{j_{2}}+
\varepsilon^{i_{2}}\otimes\varepsilon^{i_{1}}\otimes\varepsilon^{i_{3}}\otimes e_{j_{1}}\otimes e_{j_{3}}\otimes e_{j_{2}}+\right.\nonumber\\
\left. +\varepsilon^{i_{3}}\otimes\varepsilon^{i_{2}}\otimes\varepsilon^{i_{1}}\otimes e_{j_{2}}\otimes e_{j_{1}}\otimes e_{j_{3}}+
\varepsilon^{i_{3}}\otimes\varepsilon^{i_{1}}\otimes\varepsilon^{i_{2}}\otimes e_{j_{1}}\otimes e_{j_{2}}\otimes e_{j_{3}}\right)\nonumber
\end{gather}
\end{example}

\subsection{Multiplicative structure of $\End_{SL(V)}S^{k}_{0}\mathfrak{sl}(V)$}

We start with determining the algebra structure of $\End_{SL(V)}S^{k}_{0}\mathfrak{sl}(V)$. We know that to every $s\in\mathfrak{S}_{2k}$ interchanging odd and even numbers we can associate two permutations $\sigma^{s}_{1}, \sigma^{s}_{2}\in\mathfrak{S}_{k}$. Looking at the expression (\ref{sigma_{1,2}}) we see that the permutation $\tilde{\sigma}^{s}=\sigma^{s}_{2}\sigma^{s}_{1}$ represents something like 'relative position' of indices $I$ and $J$ in the resulting expression. This means that the resulting expression is of the form
\begin{gather}\label{s}
\varepsilon^{i_{l^{s}_{1}}}\otimes\varepsilon^{i_{l^{s}_{2}}}\otimes\dots\otimes\varepsilon^{i_{l^{s}_{k}}}\otimes e_{j_{l^{s}_{(\tilde{\sigma}^{s})^{-1}(1)}}}\otimes e_{j_{l^{s}_{(\tilde{\sigma}^{s})^{-1}(2)}}}\otimes\dots\otimes e_{j_{l^{s}_{(\tilde{\sigma}^{s})^{-1}(k)}}}
\end{gather}
where we can view $l$ as $l\in\mathfrak{S}_{k}$: $l(i):=l_{i}$.\par
Conjugating $s$ by $\sigma\in\mathfrak{S}^{2}_{k}$, we have $C_{Ad_{\sigma}s}\cdot\varepsilon^{I}\otimes e_{J}=C_{s}\cdot\varepsilon^{\hat{\sigma}^{-1}\cdot I}\otimes e_{\hat{\sigma}^{-1}\cdot J}$. Comparing upper indices we get $l^{Ad_{\sigma}s}=l^{s}\circ\hat{\sigma}$. Since $\tilde{\sigma}^{s}$ only depends on $s$ and not on the tensors acted on, we have that $\tilde{\sigma}^{Ad_{\sigma}s}=\tilde{\sigma}^{s}$. Conjugating $s$ by $\sigma\in\mathfrak{S}^{1}_{k}$, we get \begin{gather}\label{Ad_{sigma}s}
C_{Ad_{\sigma}s}\cdot\varepsilon^{I}\otimes e_{J}=\varepsilon^{l^{s}_{\hat{\sigma}^{-1}(1)}}\otimes\dots\otimes\varepsilon^{l^{s}_{\hat{\sigma}^{-1}(k)}}\otimes
e_{j_{l^{s}_{(\tilde{\sigma}^{s})^{-1}\hat{\sigma}^{-1}(1)}}}\otimes\dots\otimes e_{j_{l^{s}_{(\tilde{\sigma}^{s})^{-1}\hat{\sigma}^{-1}(k)}}}
\end{gather}
Comparing upper indices, we see that $l^{Ad_{\sigma}s}=l^{s}\circ\hat{\sigma}^{-1}$. Comparing lower indices, we get $\tilde{\sigma}^{Ad_{\sigma}s}=Ad_{\hat{\sigma}}\tilde{\sigma}^{s}$.
\begin{proposition}
Let's consider $s,s'\in\mathfrak{S}_{2k}$ interchanging odd and even numbers. Then there exists $\sigma\in\mathfrak{S}^{1}_{k}\times\mathfrak{S}^{2}_{k}$, s.t. $s'=Ad_{\sigma}s$ if and only if there exists $\sigma'\in\mathfrak{S}_{k}$, s.t. $\tilde{\sigma}^{s'}=Ad_{\sigma'}\tilde{\sigma}^{s}$.
\end{proposition}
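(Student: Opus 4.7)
The forward direction is essentially contained in the calculations immediately preceding the statement. If $\sigma=\sigma_{1}\sigma_{2}$ with $\sigma_{1}\in\mathfrak{S}^{1}_{k}$ and $\sigma_{2}\in\mathfrak{S}^{2}_{k}$, then because the two subgroups commute we may apply the two identities displayed in the excerpt in sequence: conjugation by $\sigma_{2}$ leaves $\tilde{\sigma}^{s}$ unchanged, while conjugation by $\sigma_{1}$ replaces $\tilde{\sigma}^{s}$ by $Ad_{\hat{\sigma}_{1}}\tilde{\sigma}^{s}$. Hence $\sigma':=\hat{\sigma}_{1}$ witnesses the implication.

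For the converse, my plan is to make the dependence of $\sigma^{s}_{1}$ and $\sigma^{s}_{2}$ on $s$ entirely explicit and thereby reduce the problem to a small system of equations in $\mathfrak{S}_{k}$. A direct bookkeeping, entirely analogous to the unwinding that produced the formula $\tilde{\sigma}^{Ad_{\sigma}s}=Ad_{\hat{\sigma}}\tilde{\sigma}^{s}$ for $\sigma\in\mathfrak{S}^{1}_{k}$, yields, for $\sigma=\sigma_{1}\sigma_{2}\in\mathfrak{S}^{1}_{k}\times\mathfrak{S}^{2}_{k}$ with $\tau_{i}:=\hat{\sigma}_{i}$, the pair of identities
\begin{gather*}
\sigma^{Ad_{\sigma}s}_{1}=\tau_{2}\,\sigma^{s}_{1}\,\tau_{1}^{-1},\qquad \sigma^{Ad_{\sigma}s}_{2}=\tau_{1}\,\sigma^{s}_{2}\,\tau_{2}^{-1}.
\end{gather*}
Because $s$ (interchanging odd and even numbers) is recovered uniquely from the pair $(\sigma^{s}_{1},\sigma^{s}_{2})$, the relation $s'=Ad_{\sigma}s$ is \emph{equivalent} to these two equations with $(\sigma^{s'}_{1},\sigma^{s'}_{2})$ substituted on the left, so it suffices to exhibit $\tau_{1},\tau_{2}\in\mathfrak{S}_{k}$ solving that system.

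Given the hypothesis $\tilde{\sigma}^{s'}=Ad_{\sigma'}\tilde{\sigma}^{s}$, I would set $\tau_{1}:=\sigma'$ and then \emph{define} $\tau_{2}$ from the first equation by
\begin{displaymath}
\tau_{2}:=\sigma^{s'}_{1}\,\sigma'\,(\sigma^{s}_{1})^{-1}.
\end{displaymath}
The only remaining point is to verify that the second equation is then automatically satisfied, and this is exactly where the hypothesis on $\tilde{\sigma}$ enters:
\begin{displaymath}
\tau_{1}\,\sigma^{s}_{2}\,\tau_{2}^{-1}=\sigma'\,\sigma^{s}_{2}\,\sigma^{s}_{1}\,(\sigma')^{-1}\,(\sigma^{s'}_{1})^{-1}=Ad_{\sigma'}(\tilde{\sigma}^{s})\cdot(\sigma^{s'}_{1})^{-1}=\tilde{\sigma}^{s'}\,(\sigma^{s'}_{1})^{-1}=\sigma^{s'}_{2}.
\end{displaymath}

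The only conceptual obstacle is recognising that the single composition $\tilde{\sigma}^{s}$ retains strictly less information than the pair $(\sigma^{s}_{1},\sigma^{s}_{2})$, so it is not a priori obvious that a conjugacy relation at the level of $\tilde{\sigma}^{s}$ can be lifted to one at the level of $s$ itself. What rescues the argument is that the bilinear system above is underdetermined by exactly one free parameter in $\mathfrak{S}_{k}$, which precisely matches the freedom of the single conjugating element $\sigma'$ available on the $\tilde{\sigma}$-side. The bulk of the routine work is the explicit formula for $\sigma^{Ad_{\sigma}s}_{i}$; once that is in hand the rest is a one-line algebraic manipulation.
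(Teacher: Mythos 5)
Your proof is correct and follows essentially the same route as the paper's: both parametrize a permutation $s$ interchanging odd and even numbers by a pair of elements of $\mathfrak{S}_k$ (you use $(\sigma^{s}_{1},\sigma^{s}_{2})$, the paper uses $(l^{s},\tilde{\sigma}^{s})$, but $l^{s}=\sigma^{s}_{1}$ and $\tilde{\sigma}^{s}=\sigma^{s}_{2}\sigma^{s}_{1}$, so these are trivially interchangeable) and invert the transformation laws under $Ad(\mathfrak{S}^{1}_{k}\times\mathfrak{S}^{2}_{k})$. Where the paper builds the conjugating element in two successive stages ($\sigma'\in\mathfrak{S}^{1}_{k}$ to match $\tilde{\sigma}$, then $\sigma'''\in\mathfrak{S}^{2}_{k}$ to match $l$), you solve the resulting two-equation system at once, which is the same argument written more compactly.
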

\begin{proof}
The 'only if' part is proved above. For the 'if' part, we shall realize that any $s$ interchanging odd and even numbers is fully determined by $l^{s}$ and $\tilde{\sigma}^{s}$ because of formula (\ref{s}). Let's have given $s,s'\in\mathfrak{S}_{2k}$ with a $\sigma\in\mathfrak{S}_{k}$, s.t. $\tilde{\sigma}^{s'}=Ad_{\sigma}\tilde{\sigma}^{s}$. Then there is a $\sigma'\in\mathfrak{S}^{1}_{k}$ with $\hat{\sigma'}=\sigma$. Put $s'':=Ad_{\sigma'}s$. Then we have $\tilde{\sigma}^{s'}=\tilde{\sigma}^{s''}$. Since $l^{s'}$ and $l^{s''}$ are both permutations, there surely exists a permutation $\sigma''\in\mathfrak{S}_{k}$ with $l^{s'}=l^{s''}\circ\sigma''$. To $\sigma''$ we can associate a unique permutation $\sigma'''\in\mathfrak{S}^{2}_{k}$ with $\hat{\sigma'''}=\sigma''$. Putting $s'''=Ad_{\sigma'''}s''$, we see that $l^{s'}=l^{s'''}$ and $\tilde{\sigma}^{s'}=\tilde{\sigma}^{s'''}$, i.e. $s'=s'''$.
\end{proof}
This proposition says that there is a well-defined injective map mapping a basis element (\ref{basis}) of $\End_{SL(V)}S^{k}_{0}\mathfrak{sl}(V)$ to some conjugacy class of $S_{k}$. To see that this map is onto, it suffices for any $\sigma\in\mathfrak{S}^{k}$ find some $s\in\mathfrak{S}_{2k}$ interchanging odd and even numbers with $\tilde{\sigma}^{s}=\sigma$. For simplicity, let's assume that $\sigma_{1}^{s}=Id$. In this case $\tilde{\sigma}^{s}=\sigma^{s}_{2}$. But $s$ is determined uniquely by $\sigma^{s}_{1}$ and $\sigma^{s}_{2}$, since $\sigma^{1}_{s}$ describes the restriction of $s$ to odd numbers, and $\sigma^{s}_{2}$ determines the restriction of $s$ to even numbers. So we see we have a one-to-one correspondence between the basis elements (\ref{basis}) of $\End_{SL(V)}S^{k}_{0}\mathfrak{sl}(V)$ and conjugacy classes of $S_{k}$.
\begin{example}
The case $k=1$ is trivial, since there is only one basis element (the identity) and only one conjugacy class in $S_{1}$ - that of identity.
\end{example}
\begin{example}
In the case $k=2$ we have two basis elements. The first one is the identity. In this case $\tilde{\sigma}$ is also the identity. To the second one we can associate a transposition, as can be easily seen from (\ref{2transposition}).
\end{example}
\begin{example}
In the case $k=3$ there are three basis elements. We treat them in the same order as in Example \ref{k3}. The first one is the identity with $\tilde{\sigma}$ being the identity. The second one corresponds to transposition, as can be seen from (\ref{3transposition}), and the third one corresponds to three-cycle, as can be seen from (\ref{3-cycle}).
\end{example}
For notational reasons let's define $s'\cdot s$ by $C_{s'\cdot s}=C_{s'}\cdot C_{s}$ and for $\sigma\in\mathfrak{S}_{k}$ let $C_{(\sigma)}$ be the basis element of $\End_{SL(V)}S^{k}\mathfrak{sl}(V)$ corresponding to the conjugacy class of $\sigma$, which we will denote by $(\sigma)$.
\begin{lemma}
Let $s,s'\in\mathfrak{S}_{2k}$ interchange odd and even numbers. Then $\tilde{\sigma}^{s'\cdot s}=\tilde{\sigma}^{s'}Ad_{(\sigma^{s'}_{1})^{-1}}\tilde{\sigma}^{s}$
\end{lemma}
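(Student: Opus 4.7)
The plan is to expand $C_{s'\cdot s}=C_{s'}\cdot C_s$ on a basis tensor $\varepsilon^I\otimes e_J$ by applying formula (\ref{sigma_{1,2}}) twice, and then to read off the permutations associated with the composition. Formula (\ref{sigma_{1,2}}) tells us that the action of $C_s$ on a tensor is simply permutation of the upper indices by $(\sigma^s_1)^{-1}$ and of the lower indices by $\sigma^s_2$. Applying $C_{s'}$ to the result permutes further by $(\sigma^{s'}_1)^{-1}$ on top and $\sigma^{s'}_2$ on the bottom. Using the standard associativity of the permutation action on multi-indices, $\alpha\cdot(\beta\cdot I)=(\alpha\beta)\cdot I$, the composition sends $\varepsilon^I\otimes e_J$ to $\varepsilon^{(\sigma^s_1\sigma^{s'}_1)^{-1}\cdot I}\otimes e_{(\sigma^{s'}_2\sigma^s_2)\cdot J}$, where I use $(\sigma^{s'}_1)^{-1}(\sigma^s_1)^{-1}=(\sigma^s_1\sigma^{s'}_1)^{-1}$.

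Next, the resulting operator is again of the form described by (\ref{sigma_{1,2}}), so it is realized by a unique $s''\in\mathfrak{S}_{2k}$ interchanging odd and even numbers; by definition of the notation this $s''$ is $s'\cdot s$. Comparing with (\ref{sigma_{1,2}}) I read off
\begin{equation*}
\sigma^{s'\cdot s}_1=\sigma^s_1\sigma^{s'}_1,\qquad \sigma^{s'\cdot s}_2=\sigma^{s'}_2\sigma^s_2 .
\end{equation*}
Multiplying these two relations yields
\begin{equation*}
\tilde{\sigma}^{s'\cdot s}=\sigma^{s'\cdot s}_2\sigma^{s'\cdot s}_1
=\sigma^{s'}_2\sigma^s_2\sigma^s_1\sigma^{s'}_1
=\sigma^{s'}_2\,\tilde{\sigma}^s\,\sigma^{s'}_1 .
\end{equation*}
Inserting $\sigma^{s'}_1(\sigma^{s'}_1)^{-1}$ between $\sigma^{s'}_2$ and $\tilde{\sigma}^s$ rewrites the right-hand side as $\tilde{\sigma}^{s'}(\sigma^{s'}_1)^{-1}\tilde{\sigma}^s\sigma^{s'}_1=\tilde{\sigma}^{s'}\cdot Ad_{(\sigma^{s'}_1)^{-1}}\tilde{\sigma}^s$, which is precisely the asserted identity.

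The only delicate point is bookkeeping of inverses: because $\sigma^s_1$ acts on upper indices via $(\sigma^s_1)^{-1}$ while $\sigma^s_2$ acts on lower indices directly, the composition reverses the order of multiplication on the upper side but not on the lower side, and this asymmetry is exactly what produces the conjugation $Ad_{(\sigma^{s'}_1)^{-1}}$ in the final formula. One should also remark that the relations $\sigma^{s'\cdot s}_i=\cdots$ are well-defined precisely because the composition $C_{s'}C_s$ again acts by a single upper-index permutation and a single lower-index permutation, so it lies in the subclass of operators covered by (\ref{sigma_{1,2}}); this is automatic since that subclass is visibly closed under composition. No further obstacle beyond these sign-and-order conventions is expected.
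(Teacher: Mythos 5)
Your proof is correct and follows essentially the same route as the paper: apply the formula $C_s\cdot\varepsilon^I\otimes e_J=\varepsilon^{(\sigma^s_1)^{-1}\cdot I}\otimes e_{\sigma^s_2\cdot J}$ twice, read off $\sigma^{s'\cdot s}_1=\sigma^s_1\sigma^{s'}_1$ and $\sigma^{s'\cdot s}_2=\sigma^{s'}_2\sigma^s_2$, then insert $\sigma^{s'}_1(\sigma^{s'}_1)^{-1}$ to produce the conjugation. Your added remarks on well-definedness are a harmless elaboration of what the paper leaves implicit.
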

\begin{proof}
We have
\begin{gather*}
C_{s'}\cdot C_{s}\cdot\varepsilon^{I}\otimes e_{J}=C_{s'}\cdot\varepsilon^{(\sigma^{s}_{1})^{-1}\cdot I}\otimes e_{\sigma^{s}_{2}\cdot J}=\varepsilon^{(\sigma^{s'}_{1})^{-1}(\sigma^{s}_{1})^{-1}\cdot I}\otimes e_{\sigma^{s'}_{2}\sigma^{s}_{2}\cdot J}
\end{gather*}
so we have
\begin{gather*}
\tilde{\sigma}^{s'\cdot s}=\sigma^{s'}_{2}\sigma^{s}_{2}\sigma^{s}_{1}\sigma^{s'}_{1}=\sigma^{s'}_{2}\sigma^{s'}_{1}(\sigma^{s'}_{1})^{-1}\sigma^{s}_{2}\sigma^{s}_{1}\sigma^{s'}_{1}
=\tilde{\sigma}^{s'}(\sigma^{s'}_{1})^{-1}\tilde{\sigma}^{s}\sigma^{s'}_{1}
\end{gather*}
\end{proof}
\begin{theorem}
Assume $\sigma,\sigma'\in\mathfrak{S}_{k}$. Then $C_{(\sigma')}\cdot C_{(\sigma)}=\sum_{(\tau)\subset\mathfrak{S}_{k}}A_{(\tau)}C_{(\tau)}$, where $A_{(\tau)}$ is the probability of $\hat{\sigma}'\hat{\sigma}\in(\tau)$ for $\hat{\sigma}'\in(\sigma')$ and $\hat{\sigma}\in\sigma$.
\end{theorem}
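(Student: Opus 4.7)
The plan is to expand the product $C_{(\sigma')}C_{(\sigma)}$ using the averaged formula (\ref{basis}), apply the preceding lemma to each summand, and then collect coefficients by conjugacy class. Fix representatives $s_{0},s_{0}'\in\mathfrak{S}_{2k}$ interchanging odd and even numbers with $\tilde{\sigma}^{s_{0}}=\sigma$ and $\tilde{\sigma}^{s_{0}'}=\sigma'$, so that
\begin{displaymath}
C_{(\sigma')}C_{(\sigma)}=\frac{1}{(k!)^{4}}\sum_{\rho,\rho'\in\mathfrak{S}^{1}_{k}\times\mathfrak{S}^{2}_{k}}C_{Ad_{\rho'}s_{0}'\cdot Ad_{\rho}s_{0}}.
\end{displaymath}
I parameterize $\rho=(\alpha,\beta)$ and $\rho'=(\alpha',\beta')$ by $(a,b,a',b')\in\mathfrak{S}_{k}^{4}$ via $a=\hat{\alpha}$, $b=\hat{\beta}$, $a'=\hat{\alpha}'$, $b'=\hat{\beta}'$, and write $\alpha_{0}=\sigma_{1}^{s_{0}}$, $\beta_{0}=\sigma_{2}^{s_{0}}$, $\alpha_{0}'=\sigma_{1}^{s_{0}'}$, $\beta_{0}'=\sigma_{2}^{s_{0}'}$, so $\sigma_{1}^{Ad_{\rho}s_{0}}=b\alpha_{0}a^{-1}$ and $\sigma_{2}^{Ad_{\rho}s_{0}}=a\beta_{0}b^{-1}$ (and similarly for $s_{0}'$).

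The heart of the argument is to show that the multiplicity $N_{s}:=|\{(\rho,\rho'):Ad_{\rho'}s_{0}'\cdot Ad_{\rho}s_{0}=s\}|$ depends on $s$ only through the conjugacy class $(\tilde{\sigma}^{s})$. Using the composition rule $\sigma_{1}^{s'\cdot s}=\sigma_{1}^{s}\sigma_{1}^{s'}$, $\sigma_{2}^{s'\cdot s}=\sigma_{2}^{s'}\sigma_{2}^{s}$ read off from the preceding lemma, the equations $\sigma_{j}^{Ad_{\rho'}s_{0}'\cdot Ad_{\rho}s_{0}}=\sigma_{j}^{s}$ for $j=1,2$ become a system in $(a,b,a',b')$. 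Solving the $j=1$ equation for $b'$ and substituting into the $j=2$ equation causes $a$ to cancel completely, leaving the single consistency condition
\begin{displaymath}
\sigma_{1}^{s}(\hat{\sigma}')^{-1}\sigma_{2}^{s}=b(\alpha_{0}\beta_{0})b^{-1},\qquad\hat{\sigma}':=a'\tilde{\sigma}^{s_{0}'}(a')^{-1}.
\end{displaymath}
Because $\sigma_{1}^{s}(\hat{\sigma}')^{-1}\sigma_{2}^{s}\sim\tilde{\sigma}^{s}(\hat{\sigma}')^{-1}$ and $\alpha_{0}\beta_{0}\in(\sigma)$, this is solvable in $b$ precisely when $\tilde{\sigma}^{s}(\hat{\sigma}')^{-1}\in(\sigma)$, i.e.\ iff there exists $\hat{\sigma}\in(\sigma)$ with $\hat{\sigma}\hat{\sigma}'=\tilde{\sigma}^{s}$. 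When solvable, there are $|Z(\sigma)|$ choices of $b$, each $\hat{\sigma}'\in(\sigma')$ is achieved by $|Z(\sigma')|$ choices of $a'$, the variable $a$ ranges freely over $\mathfrak{S}_{k}$, and $b'$ is determined. Hence
\begin{displaymath}
N_{s}=k!\,|Z(\sigma)||Z(\sigma')|\cdot\bigl|\{(\hat{\sigma},\hat{\sigma}')\in(\sigma)\times(\sigma'):\hat{\sigma}\hat{\sigma}'=\tilde{\sigma}^{s}\}\bigr|,
\end{displaymath}
and this depends on $s$ only through $(\tilde{\sigma}^{s})$ by conjugation invariance.

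To finish, I regroup the sum by $(\tau)=(\tilde{\sigma}^{s})$ and invoke the orbit--stabilizer identity $\sum_{s\,:\,\tilde{\sigma}^{s}\in(\tau)}C_{s}=\frac{(k!)^{2}}{|Z(\tau)|}C_{(\tau)}$ implicit in the definition (\ref{basis}). Setting $M_{(\tau)}:=|\{(\hat{\sigma},\hat{\sigma}')\in(\sigma)\times(\sigma'):\hat{\sigma}\hat{\sigma}'\in(\tau)\}|$, so that the per-element count is $M_{(\tau)}/|(\tau)|$, and using the elementary identities $|(\tau)||Z(\tau)|=k!=|(\sigma)||Z(\sigma)|=|(\sigma')||Z(\sigma')|$, the coefficient of $C_{(\tau)}$ in $C_{(\sigma')}C_{(\sigma)}$ simplifies to $M_{(\tau)}/(|(\sigma)||(\sigma')|)$, which is exactly the stated probability $A_{(\tau)}$ (the ordering $\hat{\sigma}\hat{\sigma}'$ vs.\ $\hat{\sigma}'\hat{\sigma}$ is immaterial since the two products are always conjugate). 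The main obstacle I anticipate is the algebraic reduction in the middle step: the two $\mathfrak{S}_{k}$-equations in four variables must be reduced to a single conjugation equation governed purely by $\tilde{\sigma}^{s}$, and one must correctly identify which of $a,b,a',b'$ is free, determined, or constrained modulo a centralizer so that the counts combine cleanly without over- or undercounting.
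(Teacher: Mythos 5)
Your proof is correct, and you've verified the statement by a legitimate route. The underlying strategy is the same as the paper's: expand $C_{(\sigma')}C_{(\sigma)}$ via the averaged definition (\ref{basis}) and use the composition rule $\tilde{\sigma}^{s'\cdot s}=\tilde{\sigma}^{s'}Ad_{(\sigma^{s'}_{1})^{-1}}\tilde{\sigma}^{s}$ (equivalently $\sigma_{1}^{s'\cdot s}=\sigma_{1}^{s}\sigma_{1}^{s'}$, $\sigma_{2}^{s'\cdot s}=\sigma_{2}^{s'}\sigma_{2}^{s}$) from the preceding lemma, together with the $Ad(\mathfrak{S}^{1}_{k}\times\mathfrak{S}^{2}_{k})$-invariance of $\tilde{\sigma}$. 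The execution, however, is different. The paper passes immediately into the group algebra $\mathbb{C}[\mathfrak{S}_{k}]$: it observes that only $\tilde{\sigma}^{s'\cdot s}$ matters, reduces the sum over $\mathfrak{S}^{1}_{k}\times\mathfrak{S}^{2}_{k}$ to one over $\mathfrak{S}_{k}$, and computes that the double average is literally $\frac{1}{|(\sigma')||(\sigma)|}\sum_{\sigma'\in(\sigma')}\sum_{\sigma\in(\sigma)}\sigma'\sigma$, so the coefficients are the stated probabilities by inspection. Your argument instead stays with the operators $C_{s}$, parameterizes the $(\rho,\rho')$-sum by four elements $(a,b,a',b')$ of $\mathfrak{S}_{k}$, reduces the two $\mathfrak{S}_{k}$-valued equations to a single conjugation equation in $b$ (with $a$ cancelling and $a'$ encoding the choice of $\hat{\sigma}'$), and then counts solutions via centralizers and orbit--stabilizer before matching the orbit of $C_{s}$'s to $\frac{(k!)^{2}}{|Z(\tau)|}C_{(\tau)}$. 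Your route makes the multiplicity count fully explicit and rigorous (including exactly which variables are free, constrained, or determined), at the cost of being more bookkeeping-heavy; the paper's group-algebra formulation is slicker but leaves the same counting implicit. Both are valid, your handling of the $\hat\sigma\hat\sigma'$ versus $\hat\sigma'\hat\sigma$ ordering and of $\alpha_{0}\beta_{0}$ versus $\tilde{\sigma}^{s_{0}}=\beta_{0}\alpha_{0}$ (conjugate, hence in the same class) is correct, and the cancellation $\frac{|Z(\sigma)||Z(\sigma')|}{(k!)^{2}}=\frac{1}{|(\sigma)||(\sigma')|}$ closes the argument cleanly.
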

\begin{proof}
 We will compute in the group algebra $\mathbb{C}[\mathfrak{S}_{k}]$. First observe that for given $s$ and $s'$ we do not need to know $C_{s'\cdot s}$, but it suffices to know $\tilde{\sigma}^{s'\cdot s}$. Now
\begin{gather*}
\sum_{\sigma\in\mathfrak{S}^{1}_{k}\times\mathfrak{S}^{2}_{k}}\sum_{\sigma'\in\mathfrak{S}^{1}_{k}\times\mathfrak{S}^{2}_{k}}\frac{1}{(k!)^{4}}C_{Ad_{\sigma'}s'}\cdot C_{Ad_{\sigma}s}=\\
=\sum_{\sigma'\in\mathfrak{S}^{1}_{k}\times\mathfrak{S}^{2}_{k}}\frac{1}{(k!)^{2}}C_{Ad_{\sigma'}s'}\cdot\sum_{\sigma\in\mathfrak{S}^{1}_{k}\times\mathfrak{S}^{2}_{k}}\frac{1}{(k!)^{2}}C_{Ad_{\sigma}s}
\end{gather*}
We start with $\sigma'=Id$ and try to compute
\begin{displaymath}
C_{s'}\cdot\sum_{\sigma\in\mathfrak{S}^{1}_{k}\times\mathfrak{S}^{2}_{k}}\frac{1}{(k!)^{2}}C_{Ad_{\sigma}s}
\end{displaymath}
It suffices to compute for each $\sigma\in\mathfrak{S}^{1}_{k}\times\mathfrak{S}^{2}_{k}$ corresponding $\tilde{\sigma}^{s'\cdot Ad_{\sigma}s}$. For this we have
\begin{gather*}
\frac{1}{(k!)^{2}}\sum_{\sigma\in\mathfrak{S}^{1}_{k}\times\mathfrak{S}^{2}_{k}}\tilde{\sigma}^{s'\cdot Ad_{\sigma}s}=\frac{1}{(k!)^{2}}\sum_{\sigma\in\mathfrak{S}^{1}_{k}\times\mathfrak{S}^{2}_{k}}\tilde{\sigma}^{s'}Ad_{(\sigma^{s'}_{1})^{-1}}\tilde{\sigma}^{Ad_{\sigma}s}
\end{gather*}
Now we associate to $\sigma$ a permutation $\sigma''\in\mathfrak{S}^{1}_{k}$ by $\sigma''(2i-1)=\sigma(2i-1)$. Then by Lemma \ref{even} we have $\tilde{\sigma}^{Ad_{\sigma}s}=\tilde{\sigma}^{Ad_{\sigma''}s}$, so
\begin{gather*}
\frac{1}{(k!)^{2}}\sum_{\sigma\in\mathfrak{S}^{1}_{k}\times\mathfrak{S}^{2}_{k}}\tilde{\sigma}^{s'}Ad_{(\sigma^{s'}_{1})^{-1}}\tilde{\sigma}^{Ad_{\sigma}s}=
\frac{1}{(k!)^{2}}\sum_{\sigma\in\mathfrak{S}^{1}_{k}\times\mathfrak{S}^{2}_{k}}\tilde{\sigma}^{s'}Ad_{(\sigma^{s'}_{1})^{-1}}\tilde{\sigma}^{Ad_{\sigma''}s}=\\
=\frac{1}{k!}\sum_{\sigma''\in\mathfrak{S}^{1}_{k}}\tilde{\sigma}^{s'}Ad_{(\sigma^{s'}_{1})^{-1}}\tilde{\sigma}^{Ad_{\sigma''}s}=
\frac{1}{k!}\sum_{\sigma''\in\mathfrak{S}^{1}_{k}}\tilde{\sigma}^{s'}Ad_{(\sigma^{s'}_{1})^{-1}}Ad_{(\widehat{\sigma''})^{-1}}\tilde{\sigma}^{s}=\\
=\frac{1}{k!}\sum_{\widehat{\sigma''}\in\mathfrak{S}_{k}}\tilde{\sigma}^{s'}Ad_{(\sigma^{s'}_{1})^{-1}}Ad_{(\widehat{\sigma''})^{-1}}\tilde{\sigma}^{s}=
\frac{1}{k!}\sum_{\widehat{\sigma''}\in\mathfrak{S}_{k}}\tilde{\sigma}^{s'}Ad_{(\widehat{\sigma''})^{-1}}\tilde{\sigma}^{s}
\end{gather*}
So we see that 
\begin{gather*}
\sum_{\sigma}\tilde{\sigma}^{s'\cdot Ad_{\sigma}s}=\frac{1}{|(\tilde{\sigma}^{s})|}\sum_{\sigma\in(\tilde{\sigma}^{s})}\tilde{\sigma}^{s'}\sigma
\end{gather*}
Now
\begin{gather*}
\frac{1}{(k!)^{4}}\sum_{\sigma'\in\mathfrak{S}^{1}_{k}\times\mathfrak{S}^{2}_{k}}\sum_{\sigma\in\mathfrak{S}^{1}_{k}\times\mathfrak{S}^{2}_{k}}\tilde{\sigma}^{Ad_{\sigma'}s'\cdot Ad_{\sigma}s}=\frac{1}{(k!)^{2}}\frac{1}{|(\tilde{\sigma}^{s})|}\sum_{\sigma'\in\mathfrak{S}^{1}_{k}\times\mathfrak{S}^{2}_{k}}
\sum_{\sigma\in(\tilde{\sigma})}\tilde{\sigma}^{Ad_{\sigma'}s'}\sigma
\end{gather*}
We associate $\sigma'''$ to $\sigma'$ in the same way as we have associated $\sigma''$ to $\sigma$. So we get
\begin{gather*}
\frac{1}{(k!)^{2}}\frac{1}{|(\tilde{\sigma}^{s})|}\sum_{\sigma'\in\mathfrak{S}^{1}_{k}\times\mathfrak{S}^{2}_{k}}\sum_{\sigma\in(\tilde{\sigma})}\tilde{\sigma}^{Ad_{\sigma'}s'}\sigma=\frac{1}{k!}\frac{1}{|(\tilde{\sigma}^{s})|}\sum_{\sigma'''\in\mathfrak{S}^{1}_{k}}\sum_{\sigma\in(\tilde{\sigma})}\tilde{\sigma}^{Ad_{\sigma'''}s'}\sigma=\\
=\frac{1}{k!}\frac{1}{|(\tilde{\sigma}^{s})|}\sum_{\widehat{\sigma'''}\in\mathfrak{S}_{k}}\sum_{\sigma\in(\tilde{\sigma})}Ad_{(\widehat{\sigma'''})^{-1}}\tilde{\sigma}^{s'}\sigma=\frac{1}{|(\tilde{\sigma}^{s'})|}\frac{1}{|(\tilde{\sigma}^{s})|}\sum_{\sigma'\in(\tilde{\sigma}^{s'})}\sum_{\sigma\in(\tilde{\sigma}^{s})}\sigma'\sigma
\end{gather*}
The only interesting thing on these terms is their conjugacy class. If we replace the terms with the same conjugacy class by its distinguished representative, the coefficients $A(\tau)$ at the representative of some $(\tau)$ will express the probability of $\sigma'\sigma\in(\tau)$ for $\sigma'\in(\tilde{\sigma}^{s'})$ and $\sigma\in(\tilde{\sigma}^{s})$. Moreover, the result has the form
\begin{gather*}
\sum_{(\tau)\subset\mathfrak{S}_{k}}A_{(\tau)}\frac{1}{|(\tau)|}\sum_{\sigma\in(\tau)}\sigma
\end{gather*}
\end{proof}
The proof of the theorem gives us a simple algorithm to compute, how the multiplication looks like on our basis elements. We will identify them with the associated conjugacy class in $\mathfrak{S}_{k}$.
\begin{example}
The case $k=1$ is trivial, since there is only one basis element ($1=Id$) with $1.1=1$.
\end{example}
\begin{example}
For $k=2$ we have two basis elements - one is the identity ($1=Id$), the second one (denoted by $x$) can be identified with transposition $(12)$. Now we have
\begin{gather*}
1.1=(Id)(Id)=(Id)=1\qquad x.x=(12)(12)=(Id)=1\\
1.x=(Id)(12)=(12)=x\qquad x.1=(12)(Id)=(12)=x
\end{gather*}
\end{example}
\begin{example}
For $k=3$ we have three basis elements - one is the identity ($1=Id$), the second one (denoted by $x$) can be identified with transposition, and the third one (y) corresponds to the three-cycle. Now we have
\begin{gather*}
1.1=(Id)(Id)=(Id)=1\quad 1.x=(Id)\frac{(12)+(13)+(23)}{3}=\frac{(12)+(13)+(23)}{3}=x \\
1.y=(Id)\frac{(123)+(132)}{2}=\frac{(123)+(132)}{2}=y\\
x.1=\frac{(12)+(13)+(23)}{3}(Id)=\frac{(12)+(13)+(23)}{3}=x\\ x.x=\frac{(12)+(13)+(23)}{3}\frac{(12)+(13)+(23)}{3}=\frac{(Id)+(123)+(132)}{3}=\frac{1+2y}{3}\\
x.y=\frac{(12)+(13)+(23)}{3}\frac{(123)+(132)}{2}=\frac{(12)+(13)+(23)}{3}=x\\
y.1=\frac{(123)+(132)}{2}(Id)=\frac{(123)+(132)}{2}=y\\
y.x=\frac{(123)+(123)}{2}\frac{(12)+(13)+(23)}{3}=\frac{(12)+(13)+(23)}{3}=x\\
y.y=\frac{(123)+(132)}{2}\frac{(123)+(132)}{2}=\frac{2(Id)+(123)+(132)}{4}=\frac{1+y}{2}
\end{gather*}
\end{example}
\begin{corollary}
The algebra $\End_{SL(V)}S^{k}_{0}\mathfrak{sl}(V)$ is commutative.
\end{corollary}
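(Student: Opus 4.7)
The plan is to invoke the explicit multiplication formula for the basis elements $C_{(\sigma)}$ established in the preceding theorem, and then exploit the elementary observation that products of group elements and their reverses always lie in the same conjugacy class.

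More precisely, by the previous theorem we have
\begin{displaymath}
C_{(\sigma')}\cdot C_{(\sigma)}=\sum_{(\tau)\subset\mathfrak{S}_{k}}A_{(\tau)}C_{(\tau)},
\end{displaymath}
where $A_{(\tau)}$ is the probability that $\sigma''\sigma'''\in(\tau)$ for uniformly chosen $\sigma''\in(\sigma')$ and $\sigma'''\in(\sigma)$. Equivalently,
\begin{displaymath}
A_{(\tau)}=\frac{1}{|(\sigma')|\,|(\sigma)|}\bigl|\{(\sigma'',\sigma''')\in(\sigma')\times(\sigma):\sigma''\sigma'''\in(\tau)\}\bigr|.
\end{displaymath}
The corresponding coefficient $B_{(\tau)}$ in the reversed product $C_{(\sigma)}\cdot C_{(\sigma')}$ is obtained by swapping the roles of $\sigma''$ and $\sigma'''$, i.e.\ it counts pairs $(\sigma''',\sigma'')$ with $\sigma'''\sigma''\in(\tau)$.

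Now for any two elements of $\mathfrak{S}_{k}$ one has
\begin{displaymath}
\sigma'''\sigma''=\sigma'''\,(\sigma''\sigma''')\,(\sigma''')^{-1},
\end{displaymath}
so $\sigma'''\sigma''$ and $\sigma''\sigma'''$ always lie in the same conjugacy class of $\mathfrak{S}_{k}$. Hence for each ordered pair $(\sigma'',\sigma''')\in(\sigma')\times(\sigma)$, the condition $\sigma''\sigma'''\in(\tau)$ is equivalent to $\sigma'''\sigma''\in(\tau)$, so the sets counted by $A_{(\tau)}$ and $B_{(\tau)}$ have the same cardinality. Therefore $A_{(\tau)}=B_{(\tau)}$ for every conjugacy class $(\tau)$, which gives $C_{(\sigma')}\cdot C_{(\sigma)}=C_{(\sigma)}\cdot C_{(\sigma')}$. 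Since these elements form a basis by the bijection between basis elements and conjugacy classes of $\mathfrak{S}_{k}$ established in the previous subsection, commutativity of $\End_{SL(V)}S^{k}_{0}\mathfrak{sl}(V)$ follows.

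There is essentially no obstacle; the entire content of the argument is the trivial conjugacy $\sigma'''\sigma''\sim\sigma''\sigma'''$, combined with the machinery already built up. The only thing to double-check is that the probabilistic coefficients really coincide with the formal coefficients in the expansion into the basis $\{C_{(\tau)}\}$, but this is exactly the content of the previous theorem and its proof.
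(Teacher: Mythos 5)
Your proof is correct and takes essentially the same approach as the paper: the paper's one-line argument is precisely the observation that $\sigma\sigma'$ and $\sigma'\sigma$ always lie in the same conjugacy class of $\mathfrak{S}_{k}$, which is the crux of your reasoning as well. You have merely spelled out the bookkeeping of how this conjugacy fact forces the probabilistic coefficients $A_{(\tau)}$ and $B_{(\tau)}$ in the basis expansion to agree.
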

\begin{proof}
For any two permutations $\sigma,\sigma'\in\mathfrak{S}_{k}$, the conjugacy class of $\sigma\sigma'$ is the same as that of $\sigma'\sigma$.
\end{proof}
The Double Commutant Theorem (see \cite{goodwall}) says that the algebra $\End_{SL(V)}S^{k}_{0}\mathfrak{sl}(V)$ (which will be denoted by $\mathcal{A}$ in the sequel) is semisimple, so it has to be a direct sum of commutative simple (and hence one-dimensional) algebras. The number of simple factors is given by $\dim\mathcal{A}$. It is at the same time the number of inequivalent irreducible representations of $\mathcal{A}$, which are one-dimensional. This implies that the $SL(V)$-isotypic components of $S^{k}_{0}\mathfrak{sl}(V)$ are irreducible.
\begin{theorem}
The algebra $\End_{SL(V)}S^{k}_{0}\mathfrak{sl}(V)$ is isomorphic to the centre of $\mathbb{C}[\mathfrak{S}_{k}]$.
\end{theorem}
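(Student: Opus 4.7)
The plan is to write down an explicit isomorphism matching the natural bases on the two sides. The centre $Z(\mathbb{C}[\mathfrak{S}_{k}])$ has its standard basis of conjugacy class sums $z_{(\sigma)}=\sum_{\pi\in(\sigma)}\pi$, one per conjugacy class of $\mathfrak{S}_{k}$; the preceding results give $\mathcal{A}=\End_{SL(V)}S^{k}_{0}\mathfrak{sl}(V)$ a basis $\{C_{(\sigma)}\}$ also indexed by those same conjugacy classes. The map I propose is the linear bijection
\[
\phi:\mathcal{A}\longrightarrow Z(\mathbb{C}[\mathfrak{S}_{k}]),\qquad \phi(C_{(\sigma)})=\frac{1}{|(\sigma)|}\,z_{(\sigma)},
\]
sending each basis element of $\mathcal{A}$ to the corresponding class \emph{average}. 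The normalisation factor $1/|(\sigma)|$ is forced; I explain why below. Bijectivity is automatic since $\phi$ carries a basis to a basis.

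To verify that $\phi$ is multiplicative I would compare both sides of $\phi(C_{(\sigma')}C_{(\sigma)})=\phi(C_{(\sigma')})\phi(C_{(\sigma)})$. The left-hand side, by the preceding theorem, equals
\[
\phi\Bigl(\sum_{(\tau)}A_{(\tau)}C_{(\tau)}\Bigr)=\sum_{(\tau)}A_{(\tau)}\frac{z_{(\tau)}}{|(\tau)|}.
\]
For the right-hand side, one expands $z_{(\sigma')}z_{(\sigma)}=\sum_{\pi'\in(\sigma'),\,\pi\in(\sigma)}\pi'\pi$ in $\mathbb{C}[\mathfrak{S}_{k}]$ and groups the summands by the conjugacy class of the product $\pi'\pi$. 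By the very definition of $A_{(\tau)}$, exactly $A_{(\tau)}|(\sigma')||(\sigma)|$ of these pairs land in $(\tau)$; by $\mathfrak{S}_{k}$-equivariance (simultaneous conjugation on the pair $(\pi',\pi)$ preserves both $(\sigma')$ and $(\sigma)$ and acts transitively on $(\tau)$), each element of $(\tau)$ is hit with the same multiplicity $A_{(\tau)}|(\sigma')||(\sigma)|/|(\tau)|$. Dividing by $|(\sigma')||(\sigma)|$ gives $\sum_{(\tau)}A_{(\tau)}z_{(\tau)}/|(\tau)|$, which agrees with the left-hand side. This forces the normalisation $1/|(\sigma)|$ in the definition of $\phi$: without it the counts would not balance.

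The only potentially delicate point is the uniform-distribution argument on $(\tau)$, and this is really just an orbit statement under simultaneous conjugation and requires no new input. In fact the very last display in the proof of the multiplication theorem already rewrote the product in the form $\sum_{(\tau)}A_{(\tau)}\tfrac{1}{|(\tau)|}\sum_{\pi\in(\tau)}\pi$, so after recognising $\sum_{\pi\in(\tau)}\pi=z_{(\tau)}$ both the isomorphism and its multiplicativity drop out at once. The commutativity noted in the preceding corollary serves as a consistency check: both algebras are commutative of dimension equal to the number of conjugacy classes of $\mathfrak{S}_{k}$, so $\phi$ is indeed the desired algebra isomorphism.
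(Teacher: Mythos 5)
Your proposal is correct and takes essentially the same approach as the paper: both identify the basis element $C_{(\sigma)}$ with the class average $\frac{1}{|(\sigma)|}\sum_{\pi\in(\sigma)}\pi$ in $\mathbb{C}[\mathfrak{S}_{k}]$ and exploit the fact that the preceding multiplication theorem already computed the product of two such averages inside the group algebra. You simply make explicit the multiplicativity check and the forced normalisation that the paper's one-line proof leaves implicit after it observes that central elements are exactly those with equal coefficients on conjugate permutations.
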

\begin{proof}
We will use the notation from the above examples and proof. An element $\sum_{\sigma\in\mathfrak{S}_{k}}a_{\sigma}\sigma$ commutes with all of $\mathbb{C}[\mathfrak{S}_{k}]$ if and only if it commutes with all elements of $\mathfrak{S}_{k}$, i.e. if for any $\sigma'\in\mathfrak{S}_{k}$ we have
\begin{displaymath}
\sigma'(\sum_{\sigma\in\mathfrak{S}_{k}}a_{\sigma}\sigma)(\sigma')^{-1}=\sum_{\sigma\in\mathfrak{S}_{k}}a_{\sigma}\sigma
\end{displaymath}
So the coefficients at conjugate permutations should be equal.
\end{proof}

\subsection{Decomposition of $S^{k}_{0}\mathfrak{sl}(V)$}

Now if $s,s'\in\mathfrak{S}_{2k}$ interchanging odd and even numbers, s.t. $\tilde{\sigma}^{s}=\tilde{\sigma}^{s'}$, the action of $C_{s}$ and $C_{s'}$ on \emph{symmetric} tensors is \emph{the same}. This means that it suffices to sum over $s$ with $\sigma^{s}_{1}=Id$, which leads to the following description of $C_{(s)}$: its action on a symmetric tensor is a symmetric tensor, which for given upper index $I$ has symmetries in lower index $J$ given by $\sum_{s\in(s)}s$.
Similarly, it suffices to sum over $s$ with $\sigma^{s}_{2}=Id$, which leads to the following description of $C_{s}$: its action on a symmetric tensor is a symmetric tensor, which for given lower index $J$ has symmetries in upper index $I$ given by $\sum_{s\in(s)}s^{-1}=\sum_{s\in(s)}s$. \par
In the same way it is possible to characterize the action of any element in $\End_{SL(V)}S^{k}\mathfrak{sl}(V)\cong Z(\mathbb{C}[\mathfrak{S}_{k}])$. Concretely, we can imagine the action in two ways. In the first way, the element fixes the upper index and permutes the lower index ($\sigma^{x}_{1}=Id$ and $\sigma^{x}_{2}=x$) and in the second way, the element fixes the lower index and permutes the upper index ($(\sigma^{x}_{1})^{-1}=x$ and $\sigma^{x}_{2}=Id$), respectively. Here, we extend the notation $\tilde{\sigma}$ from elements of $\mathfrak{S}_{2k}$ interchanging odd and even numbers to their conjugacy classes under $\mathfrak{S}^{1}_{k}\times\mathfrak{S}^{2}_{k}$ corresponding to elements of $Z(\mathbb{C}[\mathfrak{S}_{k}])$. This is in particular true for projectors onto $SL(V)$-irreducible components of $S^{k}_{0}\mathfrak{sl}(V)$, so these irreducible components are closed taking transpose, i.e. the corresponding highest weight is preserved by the unique automorphism of the Dynkin diagram of $\mathfrak{sl}(V)$.
\begin{theorem}\label{dec}
The representation $S^{k}_{0}\mathfrak{sl}(V)$ of $SL(V)$ decomposes as direct sum of irreducibles with highest weight $\lambda-w(\lambda)$ for $\lambda\in Par(k)$ (in the case of stable rank), where $w$ is the longest element of the Weyl group of $SL(V)$.
\end{theorem}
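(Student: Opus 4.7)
\emph{Proof sketch.} The plan is to combine the endomorphism algebra computation of the previous subsection with a Schur-Weyl style decomposition of $S^k(V\otimes V^*)$. By the isomorphism $\End_{SL(V)}S^k_0\mathfrak{sl}(V)\cong Z(\mathbb{C}[\mathfrak{S}_k])$ established above, together with the Double Commutant Theorem, the isotypic components of $S^k_0\mathfrak{sl}(V)$ are irreducible (this was already noted as a corollary) and their number equals $\dim Z(\mathbb{C}[\mathfrak{S}_k])$, which is the number of conjugacy classes of $\mathfrak{S}_k$, i.e.\ $|Par(k)|$. Thus only the identification of the highest weights remains.

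First I would decompose $S^k\mathfrak{gl}(V)$ itself. Writing $\mathfrak{gl}(V) = V \otimes V^*$, we have $\otimes^k\mathfrak{gl}(V) \cong V^{\otimes k} \otimes (V^*)^{\otimes k}$ with $\mathfrak{S}_k$ acting diagonally, and $S^k\mathfrak{gl}(V)$ is the invariant subspace. Applying classical Schur-Weyl to each factor and using that the Specht modules $\pi_\lambda$ are self-dual and pairwise non-isomorphic, the diagonal invariants of $\pi_\lambda\otimes\pi_\mu$ vanish unless $\lambda=\mu$, yielding the Cauchy-type identity
\begin{displaymath}
S^k\mathfrak{gl}(V) = \bigoplus_{\lambda\in Par(k)} S_\lambda V \otimes S_\lambda V^*
\end{displaymath}
as $GL(V)$-modules. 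In the stable rank regime, each summand is a nonzero $SL(V)$-module whose Cartan (top) component has highest weight equal to the sum of the highest weights of the two factors, namely $\lambda + (-w(\lambda)) = \lambda - w(\lambda)$.

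Next I would argue that the Cartan components $V_{\lambda - w(\lambda)}$ lie in $S^k_0\mathfrak{sl}(V)$. By construction $S^k_0\mathfrak{sl}(V)=S^k_0\mathfrak{gl}(V)$ is the kernel of the contraction $S^k\mathfrak{gl}(V)\to S^{k-1}\mathfrak{gl}(V)$ induced by the pairing $\mathfrak{gl}(V)\to\mathbb{C}$ applied in a single slot. A highest weight vector of $V_{\lambda-w(\lambda)}\subset S_\lambda V\otimes S_\lambda V^*$ can be realized as the tensor of a highest weight vector in $S_\lambda V$ with one in $S_\lambda V^*$; this vector is killed by any slot-contraction for weight reasons, so each $V_{\lambda-w(\lambda)}$ embeds in $S^k_0\mathfrak{sl}(V)$. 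In stable rank the map $\lambda\mapsto\lambda-w(\lambda)$ is injective on $Par(k)$, so these embeddings land in pairwise distinct isotypic components.

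We have now produced $|Par(k)|$ distinct irreducible constituents of $S^k_0\mathfrak{sl}(V)$, matching exactly the count from the first paragraph, so the inclusion is an equality and the claimed decomposition follows. I expect the main obstacle to be the cleanest formulation of the tracelessness step: strictly speaking one wants to check that the highest weight vector of the Cartan component is annihilated by every cross-slot contraction in $S^k(V\otimes V^*)$, not just a single trace, but this follows from the same weight argument since any such contraction would produce a vector of weight strictly less than $\lambda-w(\lambda)$ in a module where this weight is already extremal. An alternative that avoids explicit trace calculations is to use the self-duality constraint $\mu=-w(\mu)$ deduced earlier from closure under transpose (which already restricts the candidate highest weights to those of the form $\lambda-w(\lambda)$) and then close the argument purely by the dimension count $|Par(k)|$.
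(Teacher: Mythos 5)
Your proof is correct and reaches the same conclusion, but by a genuinely different route than the paper's. The paper identifies the isotypic components directly by exploiting the explicit description of the commutant built in the preceding subsections: the minimal idempotents $p_\lambda$ of $Z(\mathbb{C}[\mathfrak{S}_k])$ act as slot permutations on one of the two index strings, so the $\lambda$-isotypic component consists of symmetric trace-free tensors that, for fixed upper index, carry Young symmetry $p_\lambda$ in the lower index (and vice versa); the highest weight $\lambda+\lambda^{*}=\lambda-w(\lambda)$ is then read off from that description without ever decomposing $S^{k}\mathfrak{gl}(V)$. You instead decompose $S^{k}\mathfrak{gl}(V)$ by the Cauchy identity into $\bigoplus_{\lambda\vdash k}S_\lambda V\otimes S_\lambda V^{*}$, isolate the Cartan component $V_{\lambda-w(\lambda)}$ of each summand, show by a clean weight argument that it is annihilated by every contraction (any contraction lands in $S^{k-1}\mathfrak{gl}(V)$, whose weights have positive part of degree at most $k-1$, while $\lambda-w(\lambda)$ has positive part of degree $k$), so it embeds into $S^{k}_{0}\mathfrak{sl}(V)$, and close by the count $|Par(k)|$ from the double commutant theorem. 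Both arguments rely on the commutant computation, but the paper uses it to \emph{identify} the projectors while you use it only to \emph{count} constituents and delegate the identification to the Cauchy identity. Your approach is arguably more self-contained from a representation-theoretic standpoint; the paper's is more tightly integrated with its slot-permutation machinery and avoids the trace and counting steps altogether. One small caution: your suggested shortcut via the self-duality constraint $\mu=-w(\mu)$ is not quite sufficient on its own, since $S^{k}\mathfrak{gl}(V)$ typically contains self-dual irreducibles other than the Cartan components, so one cannot drop the explicit trace or count step without further work.
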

\begin{proof}
We know from above that $Z(\mathbb{C}[\mathfrak{S}_{k}])$ is a sum of one-dimensional ideals. These ideals are generated by minimal idempotents ($p_{\lambda}$) labeled by $\lambda\in Par(k)$ (partitions of $k$ are in one-to-one correspondence with conjugacy classes in $\mathfrak{S}_{k}$ and described in \cite{goodwall}. These idempotents are the projectors onto $SL(V)$-isotypic (in this case irreducible) components of $S^{k}_{0}\mathfrak{sl}(V)$. This means that the irreducible component corresponding to $\lambda$ consists of symmetric totally trace-free tensors, which can be written as sum of tensors of the form $\varepsilon^{I}\otimes e_{p_{\lambda}\cdot J}$ (or as sum of tensors of the form $\varepsilon^{p_{\lambda}\cdot I}\otimes e_{J}$). So the highest weight of the irreducible component corresponding to $\lambda$ is $\lambda+\lambda^{*}$, where $\lambda^{*}$ is the highest weight of the dual representation to that with highest weight $\lambda$. These irreducible components may vanish for $n$ not big enough.
\end{proof}
\begin{proposition}
The irreducible components of $S^{k}_{0}\mathfrak{sl}{V}$ corresponding to $\lambda$ do not vanish for any $\lambda$ with $2\depth(\lambda)\leq n$.
\end{proposition}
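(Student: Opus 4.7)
The plan is to construct an explicit nonzero highest-weight vector in $S^{k}_{0}\mathfrak{sl}(V)$ of the required weight $\lambda-w(\lambda)$, exploiting the hypothesis $2r\leq n$ (where $r=\depth(\lambda)$) to place the ``upper'' and ``lower'' index ranges in disjoint blocks. Let $V_0=\mathrm{span}(e_1,\ldots,e_r)\subset V$ and $V_\infty^*=\mathrm{span}(\varepsilon^{n-r+1},\ldots,\varepsilon^n)\subset V^*$. Their index sets $\{1,\ldots,r\}$ and $\{n-r+1,\ldots,n\}$ are disjoint precisely because $2r\leq n$, and this is the single feature the argument exploits.

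By the Cauchy decomposition applied to the $r\times r$ ``matrix space'' $V_0\otimes V_\infty^*$,
\[
S^k(V_0\otimes V_\infty^*)=\bigoplus_{\mu\vdash k,\ \depth(\mu)\leq r} S_\mu V_0\otimes S_\mu V_\infty^*,
\]
the $\lambda$-summand is nonzero. I would take $w=u_\lambda\otimes u_\lambda^*$, where $u_\lambda\in S_\lambda V_0$ is an $SL(V_0)$-highest-weight vector and $u_\lambda^*\in S_\lambda V_\infty^*$ is an $SL(V_\infty)$-highest-weight vector (using $\varepsilon^n,\ldots,\varepsilon^{n-r+1}$ as the highest-weight basis of $V_\infty^*$), and then propagate $w$ into $S^k\mathfrak{gl}(V)$ via the natural inclusion $S^k(V_0\otimes V_\infty^*)\hookrightarrow S^k(V\otimes V^*)$. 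The element $w$ is automatically nonzero from this construction.

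Three verifications complete the argument. \emph{Trace-freeness:} the pairing $\varepsilon^j(e_i)=\delta^j_i$ vanishes whenever $i\in\{1,\ldots,r\}$ and $j\in\{n-r+1,\ldots,n\}$, so every contraction is zero and $w\in S^{k}_{0}\mathfrak{sl}(V)$. \emph{Weight:} $u_\lambda$ has $SL(V)$-weight $(\lambda_1,\ldots,\lambda_r,0,\ldots,0)$ and $u_\lambda^*$ has weight $(0,\ldots,0,-\lambda_r,\ldots,-\lambda_1)$, summing to $\lambda-w(\lambda)$. \emph{Highest weight:} using $[E^a_b,E^i_j]=\delta^i_b E^a_j-\delta^a_j E^i_b$ as a derivation on $S^k\mathfrak{gl}(V)$, the case analysis over $a<b$ splits into ``within-block'' cases (both $a,b\in\{1,\ldots,r\}$ or both in $\{n-r+1,\ldots,n\}$), which reduce to the action of the $SL(V_0)$- or $SL(V_\infty)$-raising operators on $u_\lambda$ or $u_\lambda^*$ and hence vanish; and ``cross-block'' cases, in which neither $\delta^i_b$ (requiring some $i\leq r$ to equal $b\geq r+1$) nor $\delta^a_j$ (requiring some $j\geq n-r+1$ to equal $a\leq r$) can ever fire, so all terms vanish identically.

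The main obstacle is conceptual rather than computational: $V_0\otimes V_\infty^*$ is not an $SL(V)$-submodule of $\mathfrak{gl}(V)$, so highest-weight behaviour under the large group does not come for free from the factors. The hypothesis $2r\leq n$ is precisely what provides the buffer $\{r+1,\ldots,n-r\}$ (or the degenerate case $n=2r$) guaranteeing that the cross-block $\delta$-symbols never fire; this is the single place where the depth condition is essential, and without it the constructed $w$ would fail to be highest-weight for $SL(V)$.
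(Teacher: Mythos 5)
Your proof is correct and lands on essentially the same highest-weight vector as the paper's, but the two arguments certify it by different means. The paper starts from the raw tensor $v := p_{\lambda}(e_{\lambda})\otimes\varepsilon^{\lambda}$ (Young symmetrizer applied only to the $V$-factor), projects into the symmetric power by summing over column permutations, and argues non-vanishing by inspecting the permutations that stabilize $\varepsilon^{\lambda}$; the ``highest weight'' claim for $v$ is treated as evident, which is actually a bit delicate since $v$ alone is not annihilated by the raising operators $E_{ij}$ with $n-m+1\leq i<j\leq n$ until after the column symmetrization imposes the dual $p_{\lambda}$-symmetry on the lower indices. You instead invoke the Cauchy decomposition of $S^{k}(V_{0}\otimes V_{\infty}^{*})$ as a black box: this immediately gives non-vanishing of the $\lambda$-summand (sidestepping the paper's column-stabilizer computation), and you then make the highest-weight check explicit through a case analysis of the raising operators into within-block and cross-block types, using $2\depth(\lambda)\leq n$ exactly to ensure the two blocks are disjoint so the cross-block $\delta$-symbols cannot fire. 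Both proofs exploit the same disjointness of index ranges to get trace-freeness; the trade-off is that the paper is more elementary and self-contained while you rely on a standard structure theorem, but your verification of the highest-weight property is more careful than the paper's.
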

\begin{proof}
We shall construct the highest weight vector and prove that it is nonzero. Let's put $m:=\depth(\lambda)$ and define
\begin{gather*}
e_{\lambda}:=e_{1}\otimes\dots\otimes e_{1}\otimes e_{2}\otimes\dots\otimes e_{2}\otimes\dots\otimes e_{m}\otimes\dots\otimes e_{m}\\
\varepsilon^{\lambda}:=\varepsilon^{n}\otimes\dots\otimes\varepsilon^{n}\otimes\varepsilon^{n-1}\otimes\dots\otimes\varepsilon^{n-1}\otimes\dots\otimes
\varepsilon^{n-m+1}\otimes\dots\otimes\varepsilon^{n-m+1}	
\end{gather*}
where in $e_{\lambda}$ each $e_{i}$ occurs $\lambda_{i}$-times, and the same for $\varepsilon^{n-i+1}$ in $\varepsilon^{\lambda}$. We know that we are looking for a tensor which for given lower indices has symmetry in upper indices given by $p_{\lambda}$. So let's start with
\begin{displaymath}
	v:=p_{\lambda}(e_{\lambda})\otimes\varepsilon^{\lambda}
\end{displaymath}
This is evidently a highest weight vector of the right weight. Now we shall prove that its symmetrization in columns is nonzero.\par
It suffices to consider symmetrization over permutations preserving $\varepsilon^{\lambda}$. For such a permutation $\pi$ we have
\begin{displaymath}
	(\pi\circ p_{\lambda})(e_{\lambda})\otimes\varepsilon^{\lambda}=(p_{\lambda}\circ\pi)
	(e_{\lambda})\otimes\varepsilon^{\lambda}=p_{\lambda}(e_{\lambda})\otimes\varepsilon^{\lambda}
\end{displaymath}
\end{proof}

\section{Algebra of symmetries}

\subsection{Statement of Theorem}

We have to identify the vector space of symmetries as an associative algebra. To do this, let us first consider the composition $\mathcal{D}_{V}\mathcal{D}_{W}$ in case $V,W\in\mathfrak{sl}(n+1,1)$. The operators $\mathcal{D}_{V}$ and $\mathcal{D}_{W}$ on $M$ are induced by the ambient operators
\begin{displaymath}
\mathfrak{D}_{V}=V^{B}_{A}(x^{A}\partial_{B}-x_{B}\partial^{A})\quad\mbox{and}\quad\mathfrak{D}_{W}=W^{D}_{C}(x^{C}\partial_{D}-x_{D}\partial^{C})
\end{displaymath}
Their composition is, therefore, induced by
\begin{gather}\label{comp}
\mathfrak{D}_{V}\mathfrak{D}_{W}=V^{B}_{A}W^{D}_{C}(x^{A}x^{C}\partial_{B}\partial_{D}-x^{A}x_{D}\partial_{B}\partial^{C}-x_{B}x^{C}\partial^{A}\partial_{D}+x_{B}x_{D}\partial^{A}\partial^{C})+\\
+V^{S}_{A}W^{D}_{S}x^{A}\partial_{D}+V^{B}_{T}W^{T}_{C}x_{B}\partial^{C}\nonumber
\end{gather}
If we write
\begin{gather*}
V^{B}_{A}W^{D}_{C}=T^{BD}_{AC}+\delta^{B}_{C}U^{D}_{A}+\delta^{D}_{A}\tilde{U}^{B}_{C}-\frac{1}{n+2}(\delta^{B}_{A}U^{D}_{C}+\delta^{B}_{A}\tilde{U}^{D}_{C}+\delta^{D}_{C}U^{B}_{A}+\delta^{D}_{C}\tilde{U}^{B}_{A})+\\
+\frac{1}{(n+2)^{2}}\delta^{B}_{A}\delta^{D}_{C}(U^{X}_{X}+\tilde{U}^{X}_{X})
\end{gather*}
where
\begin{gather}\label{u}
U^{D}_{A}=\frac{(2n^{2}+8n+4)V^{X}_{A}W^{D}_{X}+4V^{D}_{X}W^{X}_{A}}{2n(n+2)(n+4)}-\frac{(n^{2}+4n+6)V^{X}_{Y}W^{Y}_{X}\delta^{D}_{A}}{2n(n+1)(n+3)(n+4)}\nonumber\\
\tilde{U}^{B}_{C}=\frac{4V^{X}_{C}W^{B}_{X}+(2n^{2}+8n+4)V^{B}_{X}W^{X}_{C}}{2n(n+2)(n+4)}-\frac{(n^{2}+4n+6)V^{X}_{Y}W^{Y}_{X}\delta^{B}_{C}}{2n(n+1)(n+3)(n+4)}
\end{gather}
then it is easy to verify that $T^{BD}_{AC}$ is totally trace-free. Now, from (\ref{comp}), we may rewrite
\begin{gather*}
\mathcal{D}_{V}\mathcal{D}_{W}=T^{BD}_{AC}(x^{A}x^{C}\partial_{B}\partial_{D}-x^{A}x_{D}\partial_{B}\partial^{C}-x_{B}x^{C}\partial^{A}\partial_{D}+x_{B}x_{D}\partial^{A}\partial^{C})+\\
+U^{D}_{A}(x^{A}x^{C}\partial_{C}\partial_{D}-x^{A}x_{D}\partial_{C}\partial^{C}-x_{C}x^{C}\partial^{A}\partial_{D}+x_{C}x_{D}\partial^{A}\partial^{C})+\\
+\tilde{U}^{B}_{C}(x^{A}x^{C}\partial_{B}\partial_{A}-x^{A}x_{A}\partial_{B}\partial^{C}-x_{B}x^{C}\partial^{A}\partial_{A}+x_{B}x_{A}\partial^{A}\partial^{C})-\\
-\frac{1}{n}(U^{D}_{C}+\tilde{U}^{D}_{C})(x^{A}x^{C}\partial_{A}\partial_{D}-x^{A}x_{D}\partial_{A}\partial^{C}-x_{A}x^{C}\partial^{A}\partial_{D}+x_{A}x_{D}\partial^{A}\partial^{C})-\\
-\frac{1}{n}(U^{B}_{A}+\tilde{U}^{B}_{A})(x^{A}x^{C}\partial_{B}\partial_{C}-x^{A}x_{C}\partial_{B}\partial^{C}-x_{B}x^{C}\partial^{A}\partial_{C}+x_{B}x_{C}\partial^{A}\partial^{C})+\\
+\frac{1}{n^{2}}(U^{X}_{X}+\tilde{U}^{X}_{X})(x^{A}x^{C}\partial_{A}\partial_{C}-x^{A}x_{C}\partial_{A}\partial^{C}-x_{A}x^{C}\partial^{A}\partial_{C}+x_{A}x_{C}\partial^{A}\partial^{C})+\\
+V^{S}_{A}W^{D}_{S}x^{A}\partial_{D}+V^{B}_{T}W^{T}_{C}x_{B}\partial^{C}
\end{gather*}
and, bearing in mind (\ref{u}) and that $x^{C}\partial_{C}$ induces multiplication by $w_{1}$, and $x_{C}\partial^{C}$ induces multiplication by $w_{2}$, respectively, if $f$ has homogeneity $(w_{1},w_{2})$, then
\begin{gather*}
\mathcal{D}_{V}\mathcal{D}_{W}f=T^{BD}_{AC}(x^{A}x^{C}\partial_{B}\partial_{D}-x^{A}x_{D}\partial_{B}\partial^{C}-x_{B}x^{C}\partial^{A}\partial_{D}+x_{B}x_{D}\partial^{A}\partial^{C})f-\\
-r(U^{D}_{A}\partial^{A}\partial_{D}+\tilde{U}^{B}_{C}\partial_{B}\partial^{C})f-(U^{D}_{A}x^{A}x_{D}+\tilde{U}^{B}_{C}x_{B}x^{C})\tilde{\Delta}f+\\
+\frac{n+2}{n(n+4)}(V^{D}_{X}W^{X}_{A}+V^{X}_{A}W^{D}_{X})\frac{nw_{1}+2w_{2}-n}{n+2}x^{A}\partial_{D}f+\\
+V^{X}_{A}W^{D}_{X}x^{A}\partial_{D}f+\\
+\frac{n+2}{n(n+4)}(V^{D}_{X}W^{X}_{A}+V^{X}_{A}W^{D}_{X})\frac{2w_{1}+nw_{2}-n}{n+2}x_{D}\partial^{A}f+\\
+V^{D}_{X}W^{X}_{A}x_{D}\partial^{A}f+\\
+\frac{(w_{1}-w_{2})^{2}-(w_{1}+w_{2})}{(n+1)(n+2)(n+3)}V^{X}_{Y}W^{Y}_{X}f-\\
-\frac{2(n^{2}+4n+6)V^{X}_{Y}W^{Y}_{X}}{n(n+1)(n+2)(n+3)(n+4)}\left[n(w_{1}^{2}+w_{2}^{2}-w_{1}-w_{2})+4w_{1}w_{2}\right]f
\end{gather*}
In particular, if $n+w_{1}+w_{2}=0$, then
\begin{gather*}
\mathcal{D}_{V}\mathcal{D}_{W}f=T^{BD}_{AC}(x^{A}x^{C}\partial_{B}\partial_{D}-x^{A}x_{D}\partial_{B}\partial^{C}-x_{B}x^{C}\partial^{A}\partial_{D}+x_{B}x_{D}\partial^{A}\partial^{C})f-\\
-r(U^{D}_{A}\partial^{A}\partial_{D}+\tilde{U}^{B}_{C}\partial_{B}\partial^{C})f-(U^{D}_{A}x^{A}x_{D}+\tilde{U}^{B}_{C}x_{B}x^{C})\tilde{\Delta}f+\\
+\frac{(n-2)(w_{1}-w_{2})-n(n+4)}{2n(n+4)}V^{D}_{X}W^{X}_{A}x^{A}\partial_{D}f+\\
+\frac{(n-2)(w_{1}-w_{2})+n(n+4)}{2n(n+4)}V^{X}_{A}W^{D}_{X}x^{A}\partial_{D}f+\\
+\frac{(2-n)(w_{1}-w_{2})+n(n+4)}{2n(n+4)}V^{D}_{X}W^{X}_{A}x_{D}\partial^{A}f+\\
+\frac{(2-n)(w_{1}-w_{2})-n(n+4)}{2n(n+4)}V^{X}_{A}W^{D}_{X}x_{D}\partial^{A}f+\\
+\frac{(-n^{3}-n^{2}+6n+12)(w_{1}-w_{2})^{2}-n^{2}(n+4)(n^{2}+4n+5)}{n(n+1)(n+2)(n+3)(n+4)}V^{X}_{Y}W^{Y}_{X}f=\\
=T^{BD}_{AC}(x^{A}x^{C}\partial_{B}\partial_{D}-x^{A}x_{D}\partial_{B}\partial^{C}-x_{B}x^{C}\partial^{A}\partial_{D}+x_{B}x_{D}\partial^{A}\partial^{C})f-\\
-r(U^{D}_{A}\partial^{A}\partial_{D}+\tilde{U}^{B}_{C}\partial_{B}\partial^{C})f-(U^{D}_{A}x^{A}x_{D}+\tilde{U}^{B}_{C}x_{B}x^{C})\tilde{\Delta}f+\\
+\frac{(n-2)(w_{1}-w_{2})(V^{D}_{X}W^{X}_{A}+V^{X}_{A}W^{D}_{X}-\frac{2}{n+2}V^{X}_{Y}W^{Y}_{X}\delta^{A}_{D})}{2n(n+4)}(x^{A}\partial_{D}-x_{D}\partial^{A})f-\\
-\frac{n(n+4)(V^{D}_{X}W^{X}_{A}-V^{X}_{A}W^{D}_{X})}{2n(n+4)}(x^{A}\partial_{D}-x_{D}\partial^{A})f+\\
+\frac{(n^{2}+n+6)(w_{1}-w_{2})^{2}-n^{2}(n+4)(n^{2}+4n+5)}{n(n+1)(n+2)(n+3)(n+4)}V^{X}_{Y}W^{Y}_{X}f
\end{gather*}
Noting that $x^{A}x^{C}\partial_{B}\partial_{D}-x^{A}x_{D}\partial_{B}\partial^{C}-x_{B}x^{C}\partial^{A}\partial_{D}+x_{B}x_{D}\partial^{A}\partial^{C}$ is symmetric in $AB$ and $CD$, we may rewrite the first term and obtain, upon restriction to $\mathcal{N}$,
\begin{gather}\label{compd}
\mathcal{D}_{V}\mathcal{D}_{W}\equiv\mathcal{D}_{(VW)_{2}}+\mathcal{D}_{(VW)_{1}}+\mathcal{D}_{(VW)_{0}}\quad\mod\;\tilde{\Delta}
\end{gather}
where
\begin{eqnarray}
((VW)_{2})^{BD}_{AC}&=&(T^{BD}_{AC}+T^{DB}_{CA})/2\\
((VW)_{1})^{BD}_{AC}&=&\frac{(n-2)(w_{1}-w_{2})(V^{D}_{X}W^{X}_{A}+V^{X}_{A}W^{D}_{X}-\frac{2}{n+2}V^{X}_{Y}W^{Y}_{X}\delta^{A}_{D})}{2n(n+4)}-\nonumber\\
&&-\frac{(V^{D}_{X}W^{X}_{A}-V^{X}_{A}W^{D}_{X})}{2}\nonumber\\
((VW)_{0})^{BD}_{AC}&=&\frac{(n^{2}+n+6)(w_{1}-w_{2})^{2}-n^{2}(n+4)(n^{2}+4n+5)}{n(n+1)(n+2)(n+3)(n+4)}V^{X}_{Y}W^{Y}_{X}\nonumber
\end{eqnarray}
Each of these terms has a simple interpretation as follows. The representation $\otimes^{2}\mathfrak{sl}(n+2,\mathbb{C})$ of $\mathfrak{sl}(n+2,\mathbb{C})$ decomposes into seven irreducible pieces:
\begin{gather}
(\one\boxtimes\one^{*})\otimes(\one\boxtimes\one^{*})=\secsym\boxtimes\secsym^{*}\oplus\secskewsym\boxtimes\secskewsym^{*}\oplus\one\boxtimes\one^{*}\oplus\mathbb{C}\oplus\\
\oplus\secsym\boxtimes\secskewsym^{*}\oplus\secskewsym\boxtimes\secsym^{*}\oplus\one\boxtimes\one^{*}
\end{gather}
The projection of $V\otimes W$ into the first of these irreducibles is $V\boxtimes W$ - their Cartan product. The sum of first two components is the trace-free part of $S^{2}\mathfrak{sl}(n+2,\mathbb{C})$, the sum of the fifth and sixth component is the trace-free part of $\Lambda^{2}\mathfrak{sl}(n+2,\mathbb{C})$. These two components are mapped to zero by $\mathcal{D}$. The projection on the last component is the Lie bracket and the projection on the fourth component is the Killing form.\par
There is another term (in the third tensor power of $\mathfrak{sl}(n+2,\mathbb{C})$), which induces zero symmetry, namely the one of the form $(V^{[B_{1}B_{2}B_{3}]}_{[A_{1}A_{2}A_{3}]})_{\circ}$. Here we use the fact that by (\ref{hos}) any tensor skew-symmetric in three upper or lower indices must induce zero symmetry.
\begin{theorem}
The algebra $\mathcal{A}_{n}$ of symmetries of the sub-Laplacian on $M$ is isomorphic to the tensor algebra
\begin{displaymath}
\bigoplus_{s=0}^{\infty}\bigotimes^{s}\mathfrak{sl}(n+2,\mathbb{C})
\end{displaymath}
modulo the two-sided ideal generated by the elements
\begin{gather}\label{el1}
V^{B}_{A}W^{D}_{C}-(T^{BD}_{AC}+T^{DB}_{CA})/2-\frac{(n-2)(w_{1}-w_{2})(V^{D}_{X}W^{X}_{A}+V^{X}_{A}W^{D}_{X}-\frac{2}{n+2}V^{X}_{Y}W^{Y}_{X}\delta^{A}_{D})}{2n(n+4)}-\\
-\frac{(V^{D}_{X}W^{X}_{A}-V^{X}_{A}W^{D}_{X})}{2}-\frac{(n^{2}+n+6)(w_{1}-w_{2})^{2}-n^{2}(n+4)(n^{2}+4n+5)}{n(n+1)(n+2)(n+3)(n+4)}V^{X}_{Y}W^{Y}_{X}\nonumber
\end{gather}
and
\begin{gather}\label{el2}
(U^{[B}_{[A}V^{D}_{C}W^{F]}_{E]})_{\circ}
\end{gather}
for $U,V,W\in\mathfrak{sl}(n+2,\mathbb{C})$.
\end{theorem}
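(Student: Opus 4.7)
The plan is to exhibit $\mathcal{A}_n$ as the quotient of $T^{\bullet} := \bigoplus_{s=0}^\infty \bigotimes^s \mathfrak{sl}(n+2,\mathbb{C})$ by the two-sided ideal $J$ generated by (\ref{el1}) and (\ref{el2}). First I construct a surjective algebra map $\pi: T^{\bullet} \to \mathcal{A}_n$ by sending $V\in\mathfrak{sl}(n+2,\mathbb{C})$ to the first-order symmetry $\mathcal{D}_V$ induced by the ambient operator $\mathfrak{D}_V = V^B_A(x^A\partial_B - x_B\partial^A)$, and extending multiplicatively. Since ambient composition preserves commutation with both $\tilde\Delta$ and $r$, the image lies in $\mathcal{A}_n$. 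Surjectivity follows from Proposition \ref{prop1} combined with the explicit formulas (\ref{symbd}), which realize any solution of the first BGG operator corresponding to $\oooo{d-2s}{s}{s}{d-2s}$ as the leading symbol of a composition of first-order ambient symmetries.

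Next I verify that the generators of $J$ lie in $\ker\pi$. Relation (\ref{el1}) is precisely a rewriting of (\ref{compd}), asserting $\mathcal{D}_V\mathcal{D}_W \equiv \mathcal{D}_{(VW)_2}+\mathcal{D}_{(VW)_1}+\mathcal{D}_{(VW)_0}$ modulo $\tilde\Delta$, i.e.\ modulo trivial symmetries, so the displayed element of (\ref{el1}) maps to zero in $\mathcal{A}_n$. For (\ref{el2}), the expansion (\ref{hos}) applied to a degree-$3$ ambient tensor splits into summands indexed by partitions $\{1,2,3\} = I\sqcup J$, and each summand is automatically symmetric in the upper indices restricted to $I$ and to $J$ separately, since the $X^A$'s and the $\partial^A$'s commute among themselves. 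By pigeonhole at least two of the three upper indices lie in a common group, so contraction with a coefficient antisymmetric in those three upper and three lower indices vanishes; the analogous bookkeeping shows the same for the embedded form appearing in longer tensor products.

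Thirdly I show that $J$ captures all relations. Set $\mathcal{B} := T^{\bullet}/J$; the map $\pi$ descends to a surjection $\bar\pi: \mathcal{B} \twoheadrightarrow \mathcal{A}_n$ of filtered algebras, so it suffices to match associated-graded pieces as $SL(n+2,\mathbb{C})$-modules. Iterating the leading (degree-$2$) part of (\ref{el1})---which eliminates the antisymmetric trace-free component $\secsym\boxtimes\secskewsym^{*}\oplus\secskewsym\boxtimes\secsym^{*}$ from each adjacent pair, and combined with associativity effects full symmetrization---reduces $\mathrm{gr}_d\,\mathcal{B}$ to a quotient of $S^{d}_{0}\mathfrak{sl}(n+2,\mathbb{C})$. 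Relation (\ref{el2}) then projects out exactly those isotypic components whose partition $\lambda\in\mathrm{Par}(d)$ has $\depth(\lambda)\ge 3$, since any such $p_\lambda$ contains a $3$-fold antisymmetrizer as a factor. By Theorem \ref{dec} the surviving components are $\bigoplus_{0\le 2s\le d} V_{(d-s,s)+(d-s,s)^{*}}$, which by Remark 3.2 is precisely $\bigoplus_{0\le 2s\le d}\oooo{d-2s}{s}{s}{d-2s}$, matching the characterization of $\mathrm{gr}_d\,\mathcal{A}_n$ from the main theorem of Section 4.3; so $\bar\pi$ is an isomorphism in each graded piece.

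The delicate point is the third step, namely the claim that iterating (\ref{el1}) and (\ref{el2}) modulo associativity really collapses $\mathrm{gr}_d\,\mathcal{B}$ down to the stated sum without forcing any extra $SL(n+2,\mathbb{C})$-equivariant identity. The key observation that unlocks this is that in the associated graded the weight-dependent grade-$1$ and grade-$0$ tails of (\ref{el1}) become subleading and drop out, leaving the purely quadratic relation $V\otimes W \equiv (V\otimes W)_{\mathrm{sym},0}$; this together with the $3$-fold antisymmetry relation is well known to present the Cartan-product truncation of $S^{\bullet}\mathfrak{sl}$. The accompanying $S_k$-character count from Theorem \ref{dec} then confirms that the dimensions of the two filtered pieces agree, so surjectivity upgrades $\bar\pi$ to an isomorphism.
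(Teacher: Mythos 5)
Your proposal follows the same overall strategy as the paper: map the tensor algebra to $\mathcal{A}_n$ via $V\mapsto\mathcal{D}_V$, check that (\ref{el1}) and (\ref{el2}) land in the kernel (you rightly trace (\ref{el1}) to (\ref{compd}) and (\ref{el2}) to the pigeonhole on (\ref{hos}), essentially unpacking the paper's one-line remark), and then match associated-graded pieces. Where you diverge is in how the graded ideal is pinned down. The paper proves both inclusions of the graded kernel directly: it first shows (Lemmas 6.1, 6.2) that the homogeneity-two part generates the complement of trace-free symmetric tensors in $\bigotimes\mathfrak{g}$, then uses the Young-symmetrizer combinatorics (Lemmas 6.3 and \ref{skews}) to show that the ideal generated additionally by $I_3$ is exactly the span of the $\depth\lambda\ge 3$ isotypic components. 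You instead run a sandwich: the surjection $\bar\pi$ plus the forward inclusion (that (\ref{el1}) kills everything off the symmetric trace-free part and (\ref{el2}) kills at least the $\depth\ge 3$ pieces, since $p_\lambda$ contains a $3$-fold antisymmetrizer) forces equality once you invoke Theorem \ref{dec} and the module description of $\mathcal{A}_n$ from Section 4. That is valid and somewhat more economical, but note that your phrase ``projects out \emph{exactly} those isotypic components'' is justified only \emph{a posteriori} by the sandwich; the local argument (``contains a $3$-fold antisymmetrizer'') gives only one direction, and the converse (that the $\depth\le 2$ components survive, the content of the paper's Lemma \ref{skews}) is never shown independently. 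Also make sure you actually invoke the canonical-form statement of Theorem 4.2 to justify that $F_d\pi$ is surjective onto symmetries of order $\le d$, so that $\mathrm{gr}\,\bar\pi$ really is surjective; you allude to Proposition \ref{prop1} and (\ref{symbd}) for this, which is the right ingredient, but the step deserves an explicit sentence.
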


\subsection{Proof of Theorem}

Define a map from the tensor algebra to $\mathcal{A}_{n}$ by
\begin{displaymath}
V_{1}\otimes V_{2}\otimes\cdots\otimes V_{s}\mapsto\mathcal{D}_{V_{1}}\mathcal{D}_{V_{2}}\cdots\mathcal{D}_{V_{s}}
\end{displaymath}
extended by linearity. From (\ref{compd}) and (\ref{hos}) it follows that the elements (\ref{el1}) and (\ref{el2}) are mapped to zero. To complete the proof, it suffices to consider the corresponding graded algebras. We must show that the kernel of the mapping
\begin{gather}
\bigoplus_{s=0}^{\infty}\bigotimes^{s}\mathfrak{sl}(n+2,\mathbb{C})\rightarrow\bigoplus_{\depth\lambda\leq2}\lambda\boxtimes\lambda^{*}
\end{gather}
is the two-sided ideal generated by (\ref{el1}) and (\ref{el2}).\par
We look at the structure of symmetry algebra. We know that all symmetries are generated by those of first order, which form a representation of $\mathfrak{sl}(n+2,\mathbb{C})$ isomorphic to the adjoint one. So as an algebra, it is a quotient of the tensor algebra of $\mathfrak{sl}(n+2,\mathbb{C})$. Since the result has a structure of $\mathfrak{sl}(n+2,\mathbb{C})$-module, the ideal we factorize must be an invariant subspace. For brevity, we will write $\mathfrak{g}$ instead of $\mathfrak{sl}(n+2,\mathbb{C})$.\par
The smallest tensor power containing a nonempty subset of the ideal is the second one. The homogeneity two part $I_{2}$ of the ideal contains all of the second tensor power except the symmetric trace-free part. We divide it into two parts: $I_{2,1}$ is the alternating power and $I_{2,2}$ is the trace of symmetric power, respectively.
\begin{lemma}
Consider the tensor algebra $\bigotimes\mathfrak{g}$. The two-sided ideal generated by $I_{2,1}$ is a complement of $S(\mathfrak{g})$.
\end{lemma}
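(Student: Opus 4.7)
The plan is to recognize $I_{2,1}$ as the Poincar\'e--Birkhoff--Witt defining relations for the universal enveloping algebra $U(\mathfrak{g})$, and then conclude by PBW.

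\textbf{Step 1: Extract the antisymmetric-in-$V\!\leftrightarrow\!W$ part of (\ref{el1}).} Writing $E(V,W)$ for the generator (\ref{el1}), I would compute $\tfrac12\bigl(E(V,W)-E(W,V)\bigr)$ term by term. The leading tensor $V^{B}_{A}W^{D}_{C}$ contributes $\tfrac12(V\otimes W - W\otimes V)$. The symmetric trace-free piece $(T^{BD}_{AC}+T^{DB}_{CA})/2$ is symmetric under $V\!\leftrightarrow\!W$, because swapping $V$ and $W$ corresponds to exchanging the index pairs $(B,A)\leftrightarrow(D,C)$ under which that combination is built to be invariant. The scalar term $V^{X}_{Y}W^{Y}_{X}$ is symmetric after relabelling dummy indices. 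The term with coefficient $(n-2)(w_{1}-w_{2})/(2n(n+4))$ involves $V^{D}_{X}W^{X}_{A}+V^{X}_{A}W^{D}_{X}-\tfrac{2}{n+2}V^{X}_{Y}W^{Y}_{X}\delta^{A}_{D}$, each summand of which is symmetric in $V,W$ by inspection. Only $-\tfrac12(V^{D}_{X}W^{X}_{A}-V^{X}_{A}W^{D}_{X}) = -\tfrac12[V,W]^{D}_{A}$ survives. Hence $I_{2,1}$ is (up to a nonzero rescaling) the linear span of
\begin{equation*}
V\otimes W - W\otimes V - [V,W] \;\in\; T^{2}(\mathfrak{g})\oplus T^{1}(\mathfrak{g}),\qquad V,W\in\mathfrak{g}.
\end{equation*}

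\textbf{Step 2: Invoke PBW.} Let $J=\langle I_{2,1}\rangle$ be the two-sided ideal generated in $T(\mathfrak{g})$ by the elements above. By the Poincar\'e--Birkhoff--Witt theorem, the composition
\begin{equation*}
S(\mathfrak{g})\xrightarrow{\;\operatorname{sym}\;}T(\mathfrak{g})\twoheadrightarrow T(\mathfrak{g})/J \;=\; U(\mathfrak{g})
\end{equation*}
is a linear isomorphism, where $\operatorname{sym}$ is total symmetrization. Equivalently, $T(\mathfrak{g})=\operatorname{sym}(S(\mathfrak{g}))\oplus J$ as vector spaces. Identifying $S(\mathfrak{g})\subset T(\mathfrak{g})$ with its image under $\operatorname{sym}$, as is customary in this section, yields the asserted complementarity.

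\textbf{Main obstacle.} The only real content is the careful verification in Step 1 that every term of (\ref{el1}) except the matrix commutator $[V,W]$ is symmetric in $V\!\leftrightarrow\!W$; in particular, one has to check that the $(w_{1}-w_{2})$-dependent and scalar pieces do not contribute to the antisymmetric part. Once this identification is made, Step 2 is an immediate appeal to PBW in its standard symmetrization form, and no further calculation is required.
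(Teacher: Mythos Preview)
Your argument is correct, but it rests on a misreading of what $I_{2,1}$ is in the paper. Immediately before this lemma the paper reduces to the associated \emph{graded} picture (``it suffices to consider the corresponding graded algebras''), and then defines $I_{2,1}$ to be the alternating part of the degree-two piece of the ideal. In that graded setting the lower-order terms of (\ref{el1}) --- in particular the commutator $[V,W]$ --- have already been discarded, so $I_{2,1}$ is literally $\Lambda^{2}\mathfrak{g}\subset T^{2}(\mathfrak{g})$, not the PBW relation $V\otimes W-W\otimes V-[V,W]$. Your Step~1 therefore computes the antisymmetric part of the \emph{filtered} generator, which is not what the lemma is about.

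With the correct reading the paper's proof is a one-liner: by definition $S(\mathfrak{g})=T(\mathfrak{g})/\langle\Lambda^{2}\mathfrak{g}\rangle$, and symmetrization splits the quotient map, so the ideal $\langle\Lambda^{2}\mathfrak{g}\rangle$ is a linear complement to the image of $S(\mathfrak{g})$. No PBW is needed. Your route via PBW does give a valid proof of the analogous statement for the ungraded relations (and in fact recovers the graded statement after passing to associated graded), but it invokes a substantially deeper theorem to prove what in context is essentially a definition unwind.
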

\begin{proof}
$I_{2,1}$ is simply $\Lambda^{2}\mathfrak{g}$. Since $S(\mathfrak{g})$ is by definition a quotient of $\otimes\mathfrak{g}$ modulo ideal generated by $\Lambda^{2}\mathfrak{g}$, the ideal must be a vector space complement of $S(\mathfrak{g})$ in $\otimes\mathfrak{g}$.
\end{proof}
\begin{lemma}
Consider the algebra $S(\mathfrak{g})$. The two-sided ideal generated by $I_{2,2}$ is a complement of $S^{k}_{0}\mathfrak{g}$.
\end{lemma}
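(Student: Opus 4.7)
The plan is to prove, degree by degree, that $S^k\mathfrak{g} = S^k_0\mathfrak{g} \oplus (J \cap S^k\mathfrak{g})$, where $J$ denotes the two-sided (equivalently, ordinary) ideal in the commutative algebra $S(\mathfrak{g})$ generated by $I_{2,2}$. Splitting the statement this way is the natural approach because, on the one hand, sum of $S^k_0\mathfrak{g}$ with the ideal must fill out $S^k\mathfrak{g}$, and on the other hand the two subspaces must meet trivially.

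First I would identify $I_{2,2}$ concretely. Under $SL(V)$ with $\mathfrak{g}=\mathfrak{sl}(n+2,\mathbb{C})\subset V\otimes V^{*}$, the space $S^{2}\mathfrak{g}$ splits as $S^{2}_{0}\mathfrak{g}\oplus\mathfrak{g}\oplus\mathbb{C}$ (the last two coming respectively from the partial contraction $V^{X}_{A}W^{D}_{X}+V^{D}_{X}W^{X}_{A}$ and the full contraction $V^{X}_{Y}W^{Y}_{X}$), and $I_{2,2}$ is precisely the $\mathfrak{g}\oplus\mathbb{C}$ summand; explicitly it is spanned by symmetrizations of tensors of the form $\delta^{B}_{C}U^{D}_{A}$ (with $U\in\mathfrak{g}$) and $\delta^{B}_{A}\delta^{D}_{C}$.

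Next I would verify $S^k\mathfrak{g} = S^k_0\mathfrak{g} + (J\cap S^k\mathfrak{g})$ by induction on $k$. The base cases $k\leq 1$ are vacuous since $J=0$ there, and $k=2$ is the definition of $I_{2,2}$. For the step, given $T\in S^k\mathfrak{g}$ write $T=T_{\circ}+T'$ with $T_{\circ}$ totally trace-free; I must realise $T'$ as a finite sum of products $g_i\cdot R_i$ with $g_i\in I_{2,2}$ and $R_i\in S^{k-2}\mathfrak{g}$. By $SL(V)$-equivariance and Schur's lemma, this reduces to verifying that every $SL(V)$-irreducible occurring in $S^{k}\mathfrak{g}$ but not in $S^{k}_{0}\mathfrak{g}$ already occurs in the image of the multiplication map
\begin{equation*}
\mu\colon I_{2,2}\otimes S^{k-2}\mathfrak{g}\;\longrightarrow\;S^{k}\mathfrak{g}.
\end{equation*}
Applying Theorem \ref{dec} recursively to $S^{k-2}\mathfrak{g}$ and invoking the Clebsch--Gordan rule for tensoring with $I_{2,2}\cong\mathfrak{g}\oplus\mathbb{C}$, one checks that the isotypic pieces of $S^{k}\mathfrak{g}$ of ``lower depth'' than those spanning $S^{k}_{0}\mathfrak{g}$ all appear, and that the corresponding multiplication is nonzero (the normalisation constants being nonzero rational functions of $n$, of the kind familiar from the trace-extraction formulas used in the expression (\ref{u}) for $U^{D}_{A}$ and $\tilde{U}^{B}_{C}$).

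Finally I would show $(J\cap S^k\mathfrak{g})\cap S^k_0\mathfrak{g}=0$. An element of the intersection is of the form $\sum_i g_i R_i$ with $g_i\in I_{2,2}$ and is at the same time totally trace-free. Contracting a pair of indices against the Kronecker factor in some $g_i$ produces a nonzero multiple (with scalar polynomial in $n$, nonvanishing for $n$ in the stable range) of an expression in the $R_i$, so a nonzero element of $J$ has a nonvanishing full trace and cannot lie in $S^{k}_{0}\mathfrak{g}$. The main obstacle in the proof is the surjectivity of $\mu$ in the second step: one has to rule out that some isotypic component in $S^{k}\mathfrak{g}\setminus S^{k}_{0}\mathfrak{g}$ is accidentally annihilated by multiplication with every element of $I_{2,2}$, which is what requires the explicit highest-weight/Clebsch--Gordan analysis and a careful look at the $n$-dependent normalising coefficients.
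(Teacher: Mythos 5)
Your overall plan---splitting the claim into a spanning statement and a transversality statement, then using $SL(V)$-equivariance, induction on degree and the decomposition theorem---is a reasonable way to make rigorous what the paper dispenses with in a single assertion (``the ideal generated by $I_{2,2}$ obviously consists of all tensors with zero trace-free part''). But two steps in your argument do not hold up. The transversality step is circular as written: you claim that an element $T=\sum_{i}g_{i}R_{i}\in J\cap S^{k}_{0}\mathfrak{g}$ must have a nonvanishing trace, ``so a nonzero element of $J$ has a nonvanishing full trace and cannot lie in $S^{k}_{0}\mathfrak{g}$.'' But ``all traces of $T$ vanish'' is exactly what $T\in S^{k}_{0}\mathfrak{g}$ means, so the asserted property of $J$ is precisely the statement $J\cap S^{k}_{0}\mathfrak{g}=0$ that you are trying to prove. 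Contracting against ``the Kronecker factor in some $g_{i}$'' does not isolate the corresponding $R_{i}$: every trace of $\sum_{i}g_{i}R_{i}$ mixes all the summands, and nothing in your argument rules out cancellation. The clean route to transversality is via the $SL(V)$-equivariant projector $\pi_{0}\colon S^{k}\mathfrak{g}\to S^{k}_{0}\mathfrak{g}$ onto the totally trace-free part: each product $g\odot R$ with $g\in I_{2,2}$ carries an explicit cross-column Kronecker factor, $\pi_{0}$ annihilates any such tensor, hence $\pi_{0}$ vanishes on the whole degree-$k$ piece $J_{k}$; an element of $J_{k}\cap S^{k}_{0}\mathfrak{g}$ is fixed by $\pi_{0}$, hence zero.

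The spanning step is where the actual content lies, and you correctly name it as the main obstacle but do not carry it out: ``one checks'' that no isotypic component of $S^{k}\mathfrak{g}$ outside $S^{k}_{0}\mathfrak{g}$ is accidentally annihilated under multiplication by every element of $I_{2,2}$. This is precisely what must be proved and cannot simply be announced; without the Clebsch--Gordan bookkeeping and the verification of nonvanishing normalisation constants, the argument is a plan rather than a proof. Also, before tensoring with $I_{2,2}$ you need a decomposition of $S^{k-2}\mathfrak{g}$, not of $S^{k-2}_{0}\mathfrak{g}$, while Theorem \ref{dec} only treats the trace-free piece; ``recursive application'' therefore means invoking the very lemma you are proving in degree $k-2$, which is legitimate as an induction but needs to be stated as such. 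Finally, your description of $I_{2,2}$ as ``spanned by symmetrizations of $\delta^{B}_{C}U^{D}_{A}$ and $\delta^{B}_{A}\delta^{D}_{C}$'' is slightly off, since these are not column-trace-free and so not in $S^{2}\mathfrak{g}$; the actual generators are these expressions corrected by within-column $\delta$-terms, as in the formula preceding (\ref{u}) in the paper.
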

\begin{proof}
$I_{2,2}$ consists of all trace terms in $S^{2}\mathfrak{g}$. The ideal generated by $I_{2,2}$ obviously consists of all tensors with zero trace-free part. Therefore, its vector space complement consists of totally trace-free tensors.
\end{proof}
Let's remark that the two-sided ideal in $S(\mathfrak{g})$ generated by $I_{2,2}$ is an intersection of $S(\mathfrak{g})$ with the two-sided ideal in $\bigotimes\mathfrak{g}$ generated by $I_{2,2}$.\par
So we see that $I_{2}$ generates in higher homogeneities the complement of the symmetric trace-free power of $\mathfrak{g}$. The trace-free part of the third symmetric power of $\mathfrak{g}$ decomposes by Theorem \ref{dec} as
\begin{displaymath}
\thirdsym\boxtimes\thirdsym^{*}\oplus\corner\boxtimes\corner^{*}\oplus\thirdskewsym\boxtimes\thirdskewsym^{*}
\end{displaymath}
The third terms induces zero symmetries, so it must lie in the ideal. We denote it by $I_{3}$.
\begin{lemma}
Let $T\in\bigotimes^{k}\mathbb{C}^{n+2}$ and let $\lambda\in Par(k)$. Then $p_{\lambda}(T)$ is a sum of tensors skew-symmetric in $\depth(\lambda)$ indices.
\end{lemma}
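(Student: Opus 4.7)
The plan is to reduce the statement about the central idempotent $p_{\lambda}$ to the action of a single Young symmetrizer $c_{t}$ for a tableau $t$ of shape $\lambda$, and then read off the required antisymmetry from the column antisymmetrizer built into $c_{t}$.

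First, I would fix a standard Young tableau $t$ of shape $\lambda$ and recall the Young symmetrizer $c_{t} = a_{t}\, b_{t}$, where $a_{t} = \sum_{\pi \in R_{t}} \pi$ is the sum over the row stabilizer $R_{t}$ and $b_{t} = \sum_{\sigma \in C_{t}} \sgn(\sigma)\,\sigma$ is the signed sum over the column stabilizer $C_{t}$. The first column of $t$ has exactly $\depth(\lambda)$ cells, so $b_{t} T$ is skew-symmetric in the $\depth(\lambda)$ tensor slots indexed by that column. Consequently
\begin{displaymath}
c_{t} T \;=\; a_{t}(b_{t} T) \;=\; \sum_{\pi \in R_{t}} \pi(b_{t} T)
\end{displaymath}
is a sum of tensors each of which is skew-symmetric in some set of $\depth(\lambda)$ slots, namely the $\pi$-image of the first-column positions.

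Next, I would invoke the standard fact from the representation theory of $\mathfrak{S}_{k}$ that the central idempotent $p_{\lambda} \in Z(\mathbb{C}[\mathfrak{S}_{k}])$ lies in the two-sided ideal generated by $c_{t}$: both coincide with the $\lambda$-isotypic component of the regular representation, by Artin--Wedderburn together with the fact that $\mathbb{C}[\mathfrak{S}_{k}]\, c_{t}$ realises the Specht module $V_{\lambda}$. This lets me write
\begin{displaymath}
p_{\lambda} \;=\; \sum_{i} \alpha_{i}\, g_{i}\, c_{t}\, h_{i}
\end{displaymath}
for scalars $\alpha_{i}$ and group algebra elements $g_{i}, h_{i} \in \mathbb{C}[\mathfrak{S}_{k}]$.

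Applying this identity to $T$ and setting $T_{i}' := h_{i} T \in \bigotimes^{k}\mathbb{C}^{n+2}$, I obtain $p_{\lambda}(T) = \sum_{i} \alpha_{i}\, g_{i}(c_{t} T_{i}')$. By the first step, each $c_{t} T_{i}'$ is a sum of tensors skew in $\depth(\lambda)$ slots; expanding $g_{i}$ as a linear combination of permutations $\rho$, and noting that $\rho$ sends a tensor skew in a set $S$ to one skew in $\rho(S)$ (still of size $\depth(\lambda)$), the claim follows. The one point requiring care — and the only place where a nontrivial structural fact is used — is the passage from the central idempotent $p_{\lambda}$ to elements of the two-sided ideal generated by the single Young symmetrizer $c_{t}$; everything else is bookkeeping with the column antisymmetrizer.
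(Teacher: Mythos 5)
Your proof is correct, but it travels a genuinely different road from the paper's. The paper uses the decomposition $p_{\lambda}=\sum_{A\in STab(\lambda)}p_{A}$ with $p_{A}=K\,c(A)r(A)$, i.e.\ the convention in which the row symmetrizer $r(A)$ acts first and the column antisymmetrizer $c(A)$ acts \emph{last}; consequently $p_{A}(T)=c(A)\bigl(r(A)(T)\bigr)$ is \emph{already} skew in the first-column slots of $A$, and the lemma drops out with no further tracking. You instead take $c_{t}=a_{t}b_{t}$ (column antisymmetrizer \emph{first}), and reach $p_{\lambda}$ not by the explicit decomposition but by appealing to Artin--Wedderburn: the two-sided ideal generated by a single Young symmetrizer $c_{t}$ is the full $\lambda$-block of $\mathbb{C}[\mathfrak{S}_{k}]$, hence contains $p_{\lambda}$, so $p_{\lambda}=\sum_{i}\alpha_{i}\,g_{i}c_{t}h_{i}$. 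This costs you some extra bookkeeping that the paper's convention avoids --- after $b_{t}$ produces a tensor skew in a fixed $\depth(\lambda)$-element set of slots, both $a_{t}$ and the outer factors $g_{i}$ redistribute those slots, and you must invoke the (correct, but worth stating) observation that a permutation $\rho$ carries a tensor skew in a set $S$ to a tensor skew in $\rho(S)$. What your route buys is robustness: it depends only on $c_{t}$ having nonzero $\lambda$-component and a nonzero image elsewhere being impossible, whereas the paper's identity $p_{\lambda}=\sum_{A}p_{A}$ involves a normalization constant and is sensitive to the ordering convention for the Young symmetrizer; you sidestep both issues at the price of a heavier invocation of the Wedderburn structure. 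Both arguments hinge on the same core fact --- a column antisymmetrizer over the first (longest) column produces skewness in $\depth(\lambda)$ slots --- and you have correctly isolated the one nontrivial structural input in your version.
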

\begin{proof}
First, $p_{\lambda}=\sum_{A\in STab(\lambda)}p_{A}$, so it suffices to prove the claim for each $p_{A}$. But $p_{A}=Kc(A)r(A)$. The constant $K$ only depends on $\lambda$, so we may consider $K=1$. We have
\begin{gather*}
p_{A}(T)=c(A)r(A)(T)=\sum_{s\in Row(A)}c(A)s(T)=:\sum_{s\in Row(A)}c(A)(T^{s})
\end{gather*}
If $i_{1},\dots,i_{l}$, where $l=\depth(\lambda)$, are entries in the first column of $A$, then $c(A)$ (among others) skew-symmetrizes $T^{s}$ in the $i_{1}$-th, ..., $i_{l}$-th indices.
\end{proof}
\begin{lemma}\label{skews}
Let $T\in\bigotimes^{k}\mathbb{C}^{n+2}$ and let $\lambda\in Par(k)$. Then skew-symmetrizing $p_{\lambda}(T)$ in $\depth(\lambda)+1$ indices gives zero.
\end{lemma}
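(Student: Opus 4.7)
The plan is to reduce the statement to a single Young symmetrizer and then invoke Schur--Weyl duality to recast the vanishing as a statement about the Specht module. By the decomposition $p_\lambda=\sum_{A\in STab(\lambda)}p_A$ used in the proof of the preceding lemma, and by linearity of the skew-symmetrization operator $\mathrm{Alt}_X$, it suffices to prove $\mathrm{Alt}_X\,p_A(T)=0$ for every standard tableau $A$ of shape $\lambda$, every $T\in\bigotimes^{k}\mathbb{C}^{n+2}$, and every subset $X\subseteq\{1,\dots,k\}$ of cardinality $m+1$, where $m:=\depth(\lambda)$.

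The essential combinatorial input is pigeonhole: the tableau $A$ has only $m$ rows, so two of the $m+1$ indices in $X$ must lie in the same row of $A$, yielding a transposition $(ij)\in R(A)\cap \mathrm{Perm}(X)$. Consequently $p_A=Kc(A)r(A)$ is right-invariant under $(ij)$ (since $r(A)(ij)=r(A)$), while $\mathrm{Alt}_X$ is right-anti-invariant under $(ij)$. These two symmetries do not combine immediately, because the column antisymmetrizer $c(A)$ sits between $r(A)$ and $\mathrm{Alt}_X$ and does not commute with $(ij)$, so a further ingredient is required to conclude $\mathrm{Alt}_X\,p_A=0$.

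To finish I would appeal to Schur--Weyl duality. Under the decomposition $\bigotimes^{k}\mathbb{C}^{n+2}\cong\bigoplus_\mu V_\mu\boxtimes\mathbb{S}^\mu\mathbb{C}^{n+2}$, both $p_A$ and $\mathrm{Alt}_X$ act solely on the $V_\mu$-factors, and $p_A$ annihilates every factor with $\mu\neq\lambda$. The vanishing therefore reduces to showing that $\mathrm{Alt}_X$ annihilates the Specht module $V_\lambda$, equivalently that the sign representation of $\mathrm{Perm}(X)\cong\mathfrak{S}_{m+1}$ does not occur in the restriction $V_\lambda|_{\mathrm{Perm}(X)}$. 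By the Littlewood--Richardson (or Pieri) branching rule, this would require the column partition $(1^{m+1})$ to fit inside $\lambda$, hence $\depth(\lambda)\geq m+1$, contradicting the hypothesis $\depth(\lambda)=m$. The main obstacle I anticipate is embedding the Schur--Weyl / branching step cleanly into the paper's otherwise elementary framework; a purely combinatorial alternative would use iterated Garnir relations anchored at the transposition $(ij)$, but these are more explicit and computationally heavier.
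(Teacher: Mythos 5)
Your proposal is correct, but it takes a genuinely different route from the paper. You correctly identify the obstacle in the elementary approach: after pigeonhole gives a transposition $(ij)\in R(A)\cap\mathrm{Perm}(X)$, one cannot directly play the invariance of $r(A)$ against the anti-invariance of $\mathrm{Alt}_X$ because $c(A)$ sits in between. You then bypass this by invoking Schur--Weyl duality and the branching rule, arguing that $\mathrm{Alt}_X$ projects onto the sign-isotypic component of $\mathrm{Perm}(X)\cong\mathfrak{S}_{m+1}$ inside the Specht module $V_\lambda$, and that $(1^{m+1})\not\subset\lambda$ forces this component to vanish. That argument is sound.

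The paper instead patches the elementary approach with a conjugation trick you did not consider, which is lighter than both Schur--Weyl and the Garnir relations you mention as an alternative. Writing $C:=\mathrm{Alt}_X$, one has
\begin{displaymath}
C\,c(A)=\sum_{\pi\in Col(A)}(\sgn\pi)\,C\pi=\sum_{\pi\in Col(A)}(\sgn\pi)\,\pi\,C_\pi,\qquad C_\pi:=\pi^{-1}C\pi,
\end{displaymath}
so that $C\,p_A=\sum_{\pi}(\sgn\pi)\,\pi\,C_\pi\,r(A)$. Each $C_\pi$ is the skew-symmetrizer in the index set $\pi^{-1}(X)$, which still has $m+1$ elements, so pigeonhole (applied now to $\pi^{-1}(X)$ rather than to $X$) produces two of these indices in the same row of $A$; hence $C_\pi r(A)=0$ term by term, because the same pair of slots is simultaneously symmetrized by $r(A)$ and skew-symmetrized by $C_\pi$. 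In short: the paper slides $C$ past $c(A)$ by conjugating, then applies your pigeonhole observation to each conjugate. This stays entirely within the symmetric-group algebra and needs no appeal to Schur--Weyl or to branching rules, which keeps the proof self-contained within the paper's otherwise combinatorial framework. Your version is conceptually attractive and would also be correct, but it imports more machinery than the statement requires.
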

\begin{proof}
Assume for simplicity, as above, that $p_{\lambda}=\sum_{A\in STab(\lambda)}p_{A}$, where $p_{A}=c(A)r(A)$. Given three indices, let us denote the skew-symmetrizer in these indices as $C$. Then 
\begin{gather*}
Cc_{A}=\sum_{\pi\in Col(A)}(\sgn\pi) C\pi=\sum_{\pi\in Col(A)}(\sgn\pi)\pi C_{\pi}
\end{gather*}
where $C_{\pi}$ is skew-symmetrizer in some (possibly not the same) three indices 
and we have
\begin{displaymath}
Cp_{A}(T)=Cc(A)r(A)(T)=\sum_{\pi\in Col(A)}(\sgn\pi)\pi C_{\pi}r(A)T
\end{displaymath}
Since $C_{\pi}$ skew-symmetrizes in more then $\depth(\lambda)$ indices, the composition $C_{\pi}r(A)$ equals zero, since it at the same time symmetrizes and skew-symmetrizes in the same pair of indices.
\end{proof}
Now the theorem is evidently true up to homogeneity three. In homogeneity $\geq$three the complement of the ideal is given by those totally trace-free symmetric tensors, which, for arbitrary but fixed upper indices, skew-symmetrized in any lower three indices are zero (Lemma \ref{skews}). But totally trace-free symmetric tensors generated by $I_{3}$ are exactly those totally trace-free symmetric tensors, which can be written as a sum of tensors such that their symmetry in upper indices for fixed lower indices is given by $p_{\lambda}$ for some $\lambda$ with $\depth(\lambda)\geq3$, so we see we are done.

\section{Acknowledgements}

This work was supported by grant GACR 201/08/0397.

\bibliographystyle{plain}
\bibliography{zvlasak}

\end{document}